\numberwithin{equation}{section}
\newtheorem{theorem}{Theorem}[section]
\newtheorem{corollary}[theorem]{Corollary}
\newtheorem{proposition}[theorem]{Proposition}
\newtheorem{lemma}[theorem]{Lemma}
\newtheorem{definition}{Definition}[section]
\newcommand{\ud}{\mathrm{d}} 
\renewcommand{\P}{\mathbb{P}}
\newcommand{\R}{\mathbb{R}}
\newcommand{\E}{\mathbb{E}}
\newcommand{\N}{\mathbb{N}}
\newcommand{\A}{\mathcal{A}}
\newcommand{\U}{\mathcal{U}}
\newcommand{\zmin}{\vartheta}
\newcommand{\func}{\Phi}
\newcommand{\funcG}{G}
\newcommand{\funck}{k}
\title{
\textbf{Extinction time of  logistic branching processes  in a Brownian  environment}}
\author{ H. Leman\footnote{ {\sc Universit\'e de Lyon, Inria, CNRS, ENS de Lyon, UMPA UMR 5669, 46 all\'ee d'Italie, 69364 Lyon, France}. E-mail: helene.leman@inria.fr} \,\, and J.C. Pardo\footnote{ {\sc Centro de Investigaci\'on en Matem\'aticas A.C. Calle Jalisco s/n. 36240 Guanajuato, M\'exico.} E-mail: jcpardo@cimat.mx. Corresponding author}\\}
\begin{document}

\maketitle
\begin{abstract}
In this paper, we study the  extinction time of logistic branching processes  which are perturbed by an independent random environment driven by   a Brownian motion.
 Our arguments  use  a Lamperti-type representation 
which is interesting on its own right and provides a one to one correspondence between the latter family of processes and the family of Feller diffusions which are perturbed by an independent spectrally positive L\'evy process. When the  independent random perturbation (of the Feller diffusion) is driven by a subordinator then the logistic branching processes in a Brownian environment converges to a specified distribution; otherwise, it becomes extinct a.s. In the latter scenario, and following a similar approach to Lambert \cite{Lambert2005}, we provide
the expectation and the Laplace transform of the absorption time,
as a functional of the solution to a Ricatti differential equation.  In particular, the latter characterises the law of the process coming down from infinity.
	\bigskip 
	
\noindent {\sc Key words and phrases}:  Continuous state branching processes in random environment, competition, population dynamics, logistic process, extinction, Continuous state branching processes with immigration, Ricatti differential equations
	
		\bigskip
		
		\noindent MSC 2000 subject classifications:  60J80, 60J70, 60J85.
\end{abstract}

\section{Introduction and main results.}
 The prototypical example of  continuous state branching processes (or CB-processes) with competition is the so-called logistic Feller diffusion 
   which is defined as the unique strong solution of the following stochastic differential equation (SDE),
\[
Y_t=Y_0+b\int_0^tY_s \ud s+\int_0^t\sqrt{2\gamma^2Y_s}\ud B^{(b)}_s-c\int_0^tY_s^2\ud s,\qquad t\geq0, 
\]
where $b\in \mathbb{R}, c>0$ and $B^{(b)}=(B^{(b)}_t; t\geq0)$ is a standard Brownian motion. Such family of processes and their extensions have been studied by several authors, see for instance Berestycki et al. \cite{BFF}, Foucart \cite{Fou}, Lambert \cite{Lambert2005}, Ma \cite{Ma2015}, Pardoux \cite{Pardoux}  and the references therein. An important feature of the  logistic Feller diffusion is that it can also be constructed as scaling limits of Bienaym\'e-Galton-Watson processes with competition which are continuous time Markov chains  where individuals behave independently from one another and each giving birth to a random number of offspring 
 but also considering competition pressure. In other words, each pair of individuals interact at a fixed rate and one of them is killed as result of such interaction.  For further details of such convergence, we refer to Section 2.4 in Lambert \cite{Lambert2005}.
 
 Using a Lamperti-type  representation (random time change), Lambert \cite{Lambert2005} generalised the logistic Feller diffusion by replacing the  diffusion term with a general  continuous state branching process (CB-process for short). More precisely,  Lambert considered the following generalised Ornstein-Uhlenbeck process  starting from $x>0$, which is described as the unique strong solution of 
\[
\ud R_t=\ud X_t-cR_t\ud t,
\]
where  $c>0$ and $X=(X_t, t\ge 0)$ denotes a spectrally positive L\'evy process, that is to say, a c\`adl\`ag stochastic process with independent and stationary increments with no negative jumps. We denote by $\mathbf{P}_x$ for the law  of $X$  started from $x\in \mathbb{R}$ and for simplicity, we let $\mathbf{P}=\mathbf{P}_0$. 

It is known that the law of any spectrally positive L\'evy process $ X$ is completely characterized  by its Laplace exponent $\psi$ which is defined as $\psi(\lambda)=\log \mathbf{E}[e^{-\lambda X_1} ]$, for $\lambda\ge 0$,  and satisfies the so-called L\'evy-Khintchine formula
\begin{equation}\label{LevyKinthcine}
\psi(\lambda)=-b\lambda+\gamma^2 \lambda^{2}+\int_{(0,\infty)}\Big(e^{-\lambda u}-1+\lambda u\mathbf{1}_{\{u<1\}}\Big)\mu(\ud u),
\end{equation}
where $b\in \mathbb{R}$, $\gamma\ge 0$ and $\mu$ is a Radon measure concentrated on $(0,\infty)$ satisfying
\begin{equation}\label{mugral}
\int_{(0, \infty)}(1\land u^2)\mu(\ud u)<\infty.
\end{equation}
It is also known that the triplet $(b, \gamma, \mu)$ characterises the law of $X$. According to Theorem 17.5 in Sato \cite{Sa}, the following log-moment condition
\[
\mathbf{E}\Big[\log^+ X_1\Big]<\infty,
\]
is necessary and sufficient for the process $R$ to possess an invariant distribution.  From Theorem 25.3 in \cite{Sa}, the previous log-moment condition  is equivalent to 
 \begin{equation}\label{logcondition}
 \int_1^\infty \log(u)\mu(\ud u)<\infty.
 \end{equation}
For further details on L\'evy   and generalised Ornstein-Uhlenbeck processes, we refer to the monograph of  Sato \cite{Sa}.

Let $T^R_0$ denotes the first hitting time of 0 of  the generalised Ornstein-Uhlenbeck process $R$, i.e. 
$T_0^R:=\inf\{s:R_s =0\},$
and  consider the random clock
\[
\eta_t=\int_0^{t\land T^R_0}\frac{\ud s}{R_s}, \qquad \textrm{for} \quad t>0.
\]
Let $C$ denotes the right-continuous inverse of the clock $\eta$. According to Lambert \cite{Lambert2005},  the logistic branching process is defined as follows
\[
Y_t=\left\{ \begin{array}{ll}
R_{C_t} & \textrm{ if } 0\le t<\eta_\infty\\
0 & \textrm{ if } \eta_\infty<\infty  \textrm{ and } t\ge \eta_\infty.
\end{array}
\right .
\]
As it was observed by Foucart \cite{Fou},  the above definition is inconsistent with the fact that the process $R$ is  positive, drifts to $\infty$ and $\eta_\infty<\infty$, a.s. The latter may occur when 
\[
\mathcal{E}:=\int_0^\theta \frac{1}{x}\exp\left\{\frac{2}{c}\int_x^\theta\frac{\psi(u)}{u}\ud u\right\}\ud x<\infty,\qquad \textrm{for some} \qquad \theta>0,
\]
according to Lemma 4 in \cite{Fou}. Actually, the later integral condition is  necessary and sufficient   for the logistic branching process $Y$ to explode with positive probability. We also point out that  the process $Y$ does not explode a.s., if the log-moment condition \eqref{logcondition}  holds since it implies that $\mathcal{E}=\infty$. The latter follows from the fact that 
\[
\int_{0+} \frac{\psi(z)}{z}\ud z <\infty \qquad\textrm{is equivalent to }\qquad \int^\infty \log(u)\mu(\ud u)<\infty,
\]
see  for instance Corollary 3.21 in Li~\cite{li2010}.

In  \cite{Fou}, the author is interested in  studying the long term behaviour of the extension of the logistic branching process $Y$ on $[0,\infty]$ where the state $\infty$ might be an entrance, reflecting or an exit boundary. In particular, Foucart improved the results of Lambert \cite{Lambert2005} for such extension. In this paper, we are not interested in the  extension of $Y$, so that we continue our exposition below in the setting of \cite{Lambert2005}.

  When $c=0$, the process $Y$ is the so-called   CB-process
and the previous random time change relationship is known as the   Lamperti transform which was established by Lamperti \cite{La}.  In other words, a CB-process is associated with a spectrally positive L\'evy process  and in particular with its Laplace exponent $\psi$ which takes the role of the offspring  generating function in the compound Poisson case. Formally speaking, we  shall refer to  all $\psi$  which respect the definition \eqref{LevyKinthcine} as branching mechanisms.

 Interesting path properties of the logistic branching processes were derived by Lambert  \cite{Lambert2005} as consequence of this path transformation. 
For instance, in the case when the process $X$ is a subordinator, i.e. the branching mechanism is of the form
\[
\psi(z)=-\delta z-\int_{(0,\infty)}(1-e^{-zu})\mu(\ud u), \qquad z\geq 0,
\]
with $\delta\ge 0$, satisfying the  log-moment condition \eqref{logcondition}  and one of the following conditions: either $\delta \ne 0$, $\mu(0, \infty)=\infty$ or $c<\mu(0, \infty)<\infty$,   then the process $Y$ is positive recurrent on $(\delta/c, \infty)$ and possesses a stationary distribution which can be computed explicitly. Moreover if \eqref{logcondition} holds  but none of the latter conditions are satisfied, then the process $Y$ is null recurrent in $(0,\infty)$ and converges to $0$ in probability (see Theorem 3.4 in \cite{Lambert2005}). 

When $X$ is not a subordinator and condition \eqref{logcondition} is satisfied, then the process $Y$ goes to $0$ a.s. Moreover,  the process $Y$ gets extinct in finite time a.s.
accordingly as 
\begin{equation}\label{grey}
\int^\infty \frac{\ud z}{\psi(z)}<\infty,
\end{equation}
which is the so-called Grey's condition. Let $T^{Y}_0$ denotes the time to extinction  of the process $Y$, i.e $T_0^Y:=\inf\{t\ge 0: Y_t=0\}$. In \cite{Lambert2005}, under Grey's condition, the  Laplace transform of $T^Y_0$  was computed explicitly  and   the law of the process coming down from infinity was also determined.

It is important to note that the logistic branching process $Y$ can also be defined (up to time to explosion) as the unique strong solution of a SDE  which can also  be extended to more general competition mechanisms. To be more precise, let us consider  a general   competition mechanism $g$ which is a non-decreasing continuous function  on $[0,\infty)$ with $g(0)=0$, hence  the  branching process with  competition  satisfies the following SDE 
\[
\begin{split}
Y_t&=Y_0+\int_0^t bY_s \ud s-\int_0^tg(Y_s)\ud s+\int_0^t\sqrt{2\gamma^2Y_s}\ud B^{(b)}_s\\
&\hspace{2cm}+\int_0^t\int_{(1,\infty)}\int_0^{Y_{s-}} z{N}^{(b)}(\ud s, \ud z, \ud u)+\int_0^t\int_{(0,1)}\int_0^{Y_{s-}} z\widetilde{N}^{(b)}(\ud s, \ud z, \ud u),
\end{split}
\]
up to  explosion, where $B^{(b)}$ is a standard Brownian motion which is independent of the  Poisson random measure $N^{(b)}$ which is defined  on $\mathbb{R}_+^3$, with intensity measure $\ud s \mu(\ud z) \ud u$ such that  $\mu$ satisfies \eqref{mugral}
 and $\widetilde{N}^{(b)}$ denotes its compensated version.  Such SDE was considered by Ma \cite{Ma2015} (see also Berestycki et al. \cite{BFF}) in the particular case when 
 $\psi$ satisfies \eqref{LevyKinthcine} with 
\begin{equation}\label{integralcond1}
\int_{(0,\infty)}(u \land u^2)\mu(\ud u)<\infty,
\end{equation}
or equivalently $|\psi^\prime(0+)|<\infty$. Such assumption simplifies the previous SDE by modifying  the linear and the jump structure terms as follows
\[
\begin{split}
Y_t&=Y_0-\psi^\prime(0+)\int_0^t Y_s \ud s-\int_0^tg(Y_s)\ud s+\int_0^t\sqrt{2\gamma^2Y_s}\ud B^{(b)}_s+\int_0^t\int_{(0,\infty)}\int_0^{Y_{s-}} z\widetilde{N}^{(b)}(\ud s, \ud z, \ud u).
\end{split}
\]
Moreover, under condition \eqref{integralcond1}  the previous SDE does not explode a.s.

Our aim is to study the time to extinction of a generalized version of the logistic branching process which includes an extra randomness coming from an independent Brownian motion  which can be interpreted as a random environment. To be more precise, we consider 
the following SDE
\begin{equation}\label{SDEBL}
\begin{split}
Z_t&=Z_0+\int_0^t\Big(bZ_s -cZ^2_s\Big)\ud s+\int_0^t\sqrt{2\gamma^2Z_s}\ud B^{(b)}_s+\sigma\int_0^t Z_{s}\ud  B^{(e)}_s\\
&\hspace{3cm}+\int_0^t\int_{[1,\infty)} \int_0^{Z_{s-}}z{N}^{(b)}(\ud s, \ud z, \ud u) +\int_0^t\int_{(0,1)} \int_0^{Z_{s-}}z\widetilde{N}^{(b)}(\ud s, \ud z, \ud u),
\end{split}
\end{equation}
up to explosion, with $b$, $\gamma$, the Brownian motion $B^{(b)}$ and the Poisson random measure $N^{(b)}$ being as before and  where $c, \sigma \geq 0$ and $B^{(e)}$ is a standard Brownian motion independent of $B^{(b)}$ and $N^{(b)}$. The SDE \eqref{SDEBL} has a unique non-negative strong solution which satisfies the Markov property, see for instance  Theorem 1 in Palau and Pardo \cite{PP}.

When $c=0$, the family of processes described by \eqref{SDEBL} was introduced independently by He et al. \cite{he2016continuous} and by Palau and Pardo \cite{PP} with $B^{(e)}$ replaced by a L\'evy process  under the name of CB-processes   in a L\'evy random environment.  In this particular case,  the process $Z$  satisfies the branching property conditionally on the environment $B^{(e)}$ (quenched branching property). This particular case  (i.e. $c=0$) was studied by Palau and Pardo in \cite{PP0} where the probability of survival and non-explosion is explicitly determined when the branching mechanism is stable, i.e. $\psi(\lambda)=c_\alpha \lambda^\alpha$, for $\lambda>0$, with  $\alpha\in (0,1)\cup (1,2]$ and $c_\alpha<0$ or $c_\alpha>0$ accordingly as $\alpha\in (0,1)$ or $\alpha\in (1,2]$. The latter events can be computed in a closed-form in this  case, since the Laplace transform of $Z$ is explicit, a property which is derived from the quenched branching property of $Z$. We point out that in \cite{PP0}  there are not necessary and sufficient conditions for CB-processes in a Brownian random environment to explode or become extinct. Under  the finite moment condition \eqref{integralcond1},  CB-processes in a L\'evy random environment do not explode (see for instance Lemma 7 in Bansaye et al. \cite{BPS1}) and moreover, according to He et al. \cite{he2016continuous}  Grey's condition \eqref{grey} is a necessary and sufficient condition for the process
to become extinct  with positive probability, see Theorem 4.1 in   \cite{he2016continuous}. In the particular case when the random environment is driven by a Brownian motion with  drift,  the associated CB-process in random environment becomes extinct at finite time a.s. if the drift term is not positive, see Corollary 4.4 in \cite{he2016continuous}. 

 We also observe that the linear drift case, i.e $\psi(u)=-bu$ for $u\ge 0$,  when $c>0$ corresponds to the monomorphic model of a single population living in a patchy environment which was  studied recently in Evans et al. \cite{EHS}.

Recently, Leman and Pardo \cite{LP1} studied  the event of extinction  and the  property of  coming  down from infinity of CB-processes  with general competition mechanisms in a L\'evy environment under the assumption that the branching mechanism satisfies the first moment condition \eqref{integralcond1}. 
In particular in \cite{LP1} it is  proved,   under the so-called Grey's condition together  with the assumption that the L\'evy  environment does not drift towards infinity,  that for any starting point the process becomes  extinct in finite time a.s.  Moreover if  the condition on the L\'evy environment is replaced by an integrability condition on the competition mechanism  then the process comes down from infinity.

In this paper,  we study the particular case when the competition mechanism is logistic, where more explicit results about the extinction time
can be provided. In particular, when the branching mechanism is associated to a subordinator, we provide conditions under which $0$ is polar, i.e the process never becomes extinct. Moreover, when  the process does not become extinct, we provide conditions for
the process to be recurrent or transient and give a description of the invariant measure when it exists. 

In order to establish our results, we introduce the following  notation. Let us  denote by $\mathbb{P}_x$, the law of $Z$ starting from $x>0$, and  define the first hitting time to $0$ of $Z$ as follows
\[
T_0=\inf\{t\geq 0, Z_t=0\},
\]
with the convention that $\inf\{\emptyset\}=\infty$.   Hence, $0$ is polar for $Z$ if and only if $\P_x(T_0<\infty)=0$ for all $x>0$. We adopt the following definition of recurrence and transience (see for instance Chapter X of Revuz and Yor \cite{RevuzYor} or Definition 1 in Duhalde et al. \cite{DFM})
\begin{definition}\label{defrectrans}
Assume that $0$ is polar, the process $Z$ is said to be recurrent if there exists $x>0$ such that
\[
\P_x\left(\liminf_{t\to \infty} |Z_t - x| = 0\right) = 1.
\]
On the other hand, the process is said to be transient if
\[
\P_x \left( \lim_{t\to\infty} Z_t=\infty\right)=1, \quad \text{for every} \quad x>0.
\]
\end{definition}
Observe  that if the property of recurrence is satisfied for a particular $x>0$, it is also true for all $x>0$. We also point out that  in Definition 1 of \cite{DFM}, the authors did not assume the polarity of $0$, since they studied a process with positive immigration. In that case, contrary to ours, the process may grow again after extinction and  thus it is either recurrent for all $x\geq 0$, or transient.

For clarity of  exposition, we split our results in two cases  depending on the form of the branching mechanism $\psi$, the subordinator case and what we call the general case which is nothing but the cases where the branching mechanism is associated with a subordinator with negative drift or with an unbounded variation spectrally positive L\'evy process. Both cases use different techniques also. Indeed  in the  subordinator case we use the Lamperti-type representation since the law of the underlying process is known and implicity  many path properties can be established. Unfortunately, this technique cannot be applied in the general case since the law of the underlying process  seems to be not so easy to be determined. Instead, we use a similar approach as in Lambert \cite{Lambert2005} where the knowledge of the infinitesimal generator is relevant.

\subsection{Subordinator case}
Let us assume that the  branching mechanism is associated to the Laplace transform of a subordinator,  that is to say
\begin{equation}\label{subor}
 \psi(z)=-\delta z-\int_{(0,\infty)}(1-e^{-z u})\mu(\ud u),
\end{equation}
where 
\[
\int_{(0,\infty)} (1\land u)\mu (\ud u)<\infty \qquad \textrm{and} \qquad
\delta:=b-\int_{(0,1)} u\mu(\ud u)\ge 0.
\]
We also introduce, the function
\[
\omega(x)=cx+\frac{\sigma^2 x^2}{2},
\]
with $\sigma>0$ and $c\ge 0$. Notice that we are also considering the case without competition (i.e. $c=0$) and implicitly  we will obtain (up to our knowledge) some   unknown path properties for CB-processes in a Brownian environment with  branching mechanism given  by \eqref{subor}. 

Our first  result provides a necessary and sufficient condition under which the process $Z$ is conservative, i.e. that $Z$ does not explode at finite time a.s.

\begin{theorem}\label{teo1} Assume that $\sigma>0$ and $c\ge 0$. The process $Z$, the unique strong solution of \eqref{SDEBL} with branching mechanism given by \eqref{subor}, is conservative if and only if  
\[
\mathcal{I}:=\int_0^1 \frac{1}{\omega (z)}\exp\left\{\int_z^1 \frac{\psi(u)}{\omega(u)}\ud u\right\}\ud z=\infty.
\]
Moreover, if $\sigma^2>2\delta$, then the process $Z$ converges to $0$ with positive probability, i.e
\[
\mathbb{P}_x\left(\lim_{t\to\infty} Z_t=0\right)>0, \qquad \textrm{for}\quad x>0.
\]
In particular,  if we also assume that  $\mathcal{I}=\infty,$ then the process converges to $0$ a.s.
\end{theorem}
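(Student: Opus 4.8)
\medskip

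\noindent\textbf{Plan of proof.} In the subordinator case the Laplace exponent \eqref{subor} has no Gaussian part, so $\gamma=0$ in \eqref{SDEBL}. The plan is to run $Z$ through a Lamperti-type time change: let $\gamma$ be the inverse of $t\mapsto\int_0^t Z_s\,\ud s$ and put $R_t:=Z_{\gamma_t}$. Applying It\^o's formula to the continuous martingale part and a random time change to the Poisson part, one identifies $R$ as the unique strong solution of
\[
\ud R_t=(\delta-cR_t)\,\ud t+\sigma\sqrt{R_t}\,\ud\widetilde B_t+\ud S_t ,
\]
with $S$ the driftless subordinator of L\'evy measure $\mu$ and $\widetilde B$ an independent Brownian motion; equivalently, $R$ is the continuous-state branching process with immigration of branching mechanism $\omega(\lambda)=c\lambda+\tfrac{\sigma^2}{2}\lambda^2$ and immigration mechanism $-\psi$, i.e.\ a Feller diffusion perturbed by an independent spectrally positive L\'evy process with an added linear drift. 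Conversely $Z_t=R_{C_t}$, where $C$ is the right-continuous inverse of $\eta_t=\int_0^{t\wedge T^R_0}R_s^{-1}\,\ud s$ and $T^R_0=\inf\{s:R_s=0\}$. As in the discussion around \cite{Fou}, two facts are read off this correspondence: $Z$ fails to be conservative precisely when, with positive probability, $R$ does not reach $0$ while $\eta_\infty=\int_0^\infty R_s^{-1}\,\ud s<\infty$; and $\lim_{t\to\infty}Z_t=0$ precisely when $T^R_0<\infty$ (on $\{\eta_\infty=\infty\}$ one has $C_t\uparrow T^R_0$ and $Z_t\to R_{T^R_0}$, while on $\{\int_0^{T^R_0}R_s^{-1}\,\ud s<\infty\}$ the process $Z$ even reaches $0$ at the finite time $\eta_\infty$).

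To prove the conservativeness dichotomy I would use that the law of the CBI $R$ is explicit. From $\mathbf E_x[e^{-\lambda R_t}]=\exp\{-xv_t(\lambda)-\int_0^t(-\psi)(v_s(\lambda))\,\ud s\}$, where $\partial_tv_t(\lambda)=-\omega(v_t(\lambda))$ and $v_0(\lambda)=\lambda$, one has $\mathbf E_x[R_t^{-1}]=\int_0^\infty\mathbf E_x[e^{-\lambda R_t}]\,\ud\lambda$; integrating in $t$ and changing variables $y=v_t(\lambda)$ (so that $\ud t=-\ud y/\omega(y)$ and $\int_0^t(-\psi)(v_s(\lambda))\,\ud s=\int_y^\lambda\frac{-\psi(z)}{\omega(z)}\,\ud z$) turns the quantities controlling the clock $\eta$ near $+\infty$ into integrals whose convergence is governed exactly by $\mathcal I$, alongside $\int_{0+}\frac{-\psi(z)}{\omega(z)}\,\ud z$ (finite iff \eqref{logcondition} holds — which is why \eqref{logcondition} always forces $\mathcal I=\infty$). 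Thus $\mathcal I$ is the scale-function-type test deciding the behaviour of $Z$ at $\infty$: when $\mathcal I<\infty$ the bad event above has positive probability and $Z$ is not conservative, whereas $\mathcal I=\infty$ forces $\eta_\infty=\infty$ on $\{T^R_0=\infty\}$ — by the ergodic theorem ($\tfrac1t\int_0^tR_s^{-1}\,\ud s\to\int r^{-1}\pi(\ud r)>0$) in the recurrent case and by a pathwise bound on the controlled growth of $R$ in the transient one. Combined with the first paragraph this yields ``$Z$ conservative $\iff\mathcal I=\infty$''.

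For the two convergence claims, note that near $r=0$ the jumps of $R$ add only a bounded perturbation of the drift (since $\int_{(0,1)}u\,\mu(\ud u)<\infty$) and the term $-cr$ is negligible, so $R$ is comparable near $0$ with the squared-Bessel-type diffusion $\ud\rho_t=\delta\,\ud t+\sigma\sqrt{\rho_t}\,\ud\widetilde B_t$ of index $4\delta/\sigma^2$. When $\sigma^2>2\delta$ this index is $<2$, so $0$ is accessible for $R$; quantitatively, there is $p>0$ with $\mathbf P_r(T^R_0<\tau)\ge p$ uniformly for $r$ in a fixed compact $K$, where $\tau$ is the exit time of a larger compact. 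In particular $\mathbf P_x(T^R_0<\infty)>0$, whence by the first paragraph $\mathbb P_x(\lim_tZ_t=0)>0$, which is the second claim. For the third, assume also $\mathcal I=\infty$; then $Z$ is conservative, and the dichotomy of the preceding paragraph places $R$ in the recurrent, $0$-accessible class, so $R$ reaches $0$ almost surely — $T^R_0<\infty$ $\mathbf P_x$-a.s. — and therefore $\lim_tZ_t=0$ $\mathbb P_x$-a.s., which is the third claim.

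I anticipate two points of difficulty. The first is the change-of-variables computation and its clean reduction to $\mathcal I$ (together with the companion integral $\int_{0+}\frac{-\psi(z)}{\omega(z)}\,\ud z$). The second, which I expect to be the real obstacle, is everything ``almost sure'': pinning down the recurrence/transience dichotomy for the CBI $R$ with the quantitative control of $\int_0^\infty R_s^{-1}\,\ud s$ it requires, and the almost-sure absorption of $R$ at $0$ under $\sigma^2>2\delta$, which also underlies the accessibility estimate used above.
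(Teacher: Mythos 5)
Your architecture is the same as the paper's: run $Z$ through the Lamperti-type time change (the paper's Theorem \ref{lamperti}), identify $R$ as the CBI with branching mechanism $\omega$ and immigration $-\psi$, and reduce the theorem to polarity/recurrence/transience of $R$ plus finiteness of the clock $\int_0^\cdot R_s^{-1}\,\ud s$. The difference is that the paper imports exactly the two facts you leave unproved from Duhalde, Foucart and Ma \cite{DFM}: their Theorem 2 (used in Lemma \ref{polarity}) gives that $0$ is polar for $R$ if and only if $2\delta\ge\sigma^2$, and their Theorem 3 gives that $R$ is recurrent if and only if $\mathcal I=\infty$ (transient if $\mathcal I<\infty$). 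Your proposal never establishes this second equivalence, and that is where the genuine gaps sit. (i) For ``$\mathcal I=\infty\Rightarrow Z$ conservative'' you must show $\eta_\infty=\infty$ on $\{T^R_0=\infty\}$. Your ergodic-theorem step ($\tfrac1t\int_0^tR_s^{-1}\ud s\to\int r^{-1}\pi(\ud r)>0$) presupposes an invariant probability, i.e.\ positive recurrence, and says nothing in the null recurrent case; the paper instead uses the elementary i.i.d.\ crossing-time bound \eqref{eq_limeta}, which covers both. Worse, for your ``transient case with $\mathcal I=\infty$'' you offer only ``a pathwise bound on the controlled growth of $R$''; this is precisely the missing content, since fast growth making $\int^\infty R_s^{-1}\ud s<\infty$ is exactly what does happen when $\mathcal I<\infty$, so you need a proof that $\mathcal I=\infty$ excludes transience (or forces growth slow enough for the clock to diverge), and none is given. (ii) For the third claim you assert that ``the dichotomy of the preceding paragraph places $R$ in the recurrent, $0$-accessible class, so $R$ reaches $0$ almost surely'', but your preceding paragraph proved no recurrence dichotomy: positive probability of hitting $0$ (your Bessel comparison) together with conservativeness does not yield $T^R_0<\infty$ a.s.; the paper obtains this from recurrence of the CBI under $\mathcal I=\infty$ combined with non-polarity, i.e.\ again from the unproved criterion.

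Two further points are less central but still need repair. Your accessibility argument near $0$ treats the immigration jumps as ``a bounded perturbation of the drift''; when $\mu$ has infinite mass the subordinator has infinitely many jumps on every interval, so a comparison with the squared Bessel diffusion of dimension $4\delta/\sigma^2$ requires a genuine domination argument (e.g.\ conditioning the accumulated immigration over a short window to be small and using an SDE comparison); the paper sidesteps this entirely by applying the integral polarity test of \cite{DFM}. Finally, integrating $\mathbf E_x[R_t^{-1}]=\int_0^\infty\mathbf E_x[e^{-\lambda R_t}]\,\ud\lambda$ over all $\lambda$ conflates two boundary effects: after the change of variables the behaviour at small $u$ is governed by $\mathcal I$, but the behaviour at large $\lambda$ is governed by $2\delta$ versus $\sigma^2$, so the full double integral can diverge for reasons unrelated to $\mathcal I$; the paper works with the truncated functional $\mathbb Q_x\bigl[\int_0^\infty(1-e^{-\theta R_s})R_s^{-1}\,\ud s\bigr]$, which isolates the $\mathcal I$-part and, combined with transience, gives the a.s.\ explosion when $\mathcal I<\infty$. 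In short, the route is the paper's route, but the decisive recurrence/transience input (and the a.s.\ absorption it delivers) is asserted rather than proved.
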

For instance, when the branching mechanism is such that  $\psi(z)=-c_\alpha z^\alpha$, for $z\ge 0$, with $\alpha\in(0,1)$ and $c_\alpha>0$, that is to say the negative of a stable subordinator, straightforward computations lead to $\mathcal{I}$ is finite or infinite accordingly as $c=0$ or $c>0$. In other words, if there is presence of competition the associated process $Z$ is conservative and moreover the process becomes extinct a.s., since $\sigma^2$ is always positive. If there is no competition, the process $Z$ explodes with positive probability. The latter case was studied in Palau and Pardo \cite{PP0} where the rate of explosion was determined explicitly.

In this setting, we also have the following identity for the total population size of the process $Z$ up to time $T_a=\inf\{t\ge 0: Z_t\le a\}$, the first hitting time of $Z$ at $a$.  Let us define
\begin{equation*}
f_\lambda(x):= \int_{0}^\infty \frac{\ud z}{\omega(z)}\exp\left\{-xz+\int_\ell^z \frac{\lambda-\psi(u)}{\omega(u)}\ud u\right\}, \qquad x\ge 0,
\end{equation*}
where $\ell$ is an arbitrary constant larger than 0. 
\begin{proposition}\label{prop_duhalde} Assume that $\sigma>0$ and $c\ge 0$.
For every $\lambda>0$ and  $x\ge a\ge 0$, we have
 \begin{equation}
\label{eq_duhalde}
\E_x\left[\exp\left\{-\lambda \int_0^{T_a} Z_s\ud s \right\}\right]=\frac{f_\lambda(x)}{f_\lambda (a)}.
\end{equation}
\end{proposition}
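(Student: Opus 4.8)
The plan is to identify $f_\lambda$ with a $\lambda$-invariant function for the auxiliary process of the Lamperti-type representation and then to run an optional-stopping argument. Recall that, in the subordinator case, $Z$ is a time change of the conservative process $R$ solving $\ud R_t=\ud X_t-cR_t\,\ud t+\sigma\sqrt{R_t}\,\ud W_t$, where $X$ is the subordinator with Laplace exponent $\psi$ and $W$ an independent Brownian motion (that is, $R$ is a Feller diffusion perturbed by the independent subordinator $X$); more precisely $Z_t=R_{C_t}$ with $C_t=\int_0^t Z_s\,\ud s$. Hence the total-mass functional is a first passage time,
\[
\int_0^{T_a} Z_s\,\ud s=\varsigma_a:=\inf\{t\ge 0:\,R_t=a\},
\]
the passage being continuous since $R$ has no negative jumps, with both sides $+\infty$ simultaneously (this also covers the case where $Z$ explodes: $Z$ does so exactly when $\int_0^\infty\ud s/R_s<\infty$, in which case $R$ escapes to $\infty$ without reaching $a$, so that $\varsigma_a=\infty$). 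It therefore suffices to prove $\E_x[e^{-\lambda\varsigma_a}]=f_\lambda(x)/f_\lambda(a)$ for $R$ started at $x\ge a$. Write $G=A+\mathcal B$ for the generator of $R$, where $A$ is the generator of $X$ — so that $A$ applied to $x\mapsto e^{-xz}$ equals $\psi(z)e^{-xz}$ — and $\mathcal B h(x)=\tfrac{\sigma^2 x}{2}h''(x)-cxh'(x)$.

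The crux is that $f_\lambda$ solves $Gf_\lambda=\lambda f_\lambda$ on $(0,\infty)$. Set $\rho(z)=\omega(z)^{-1}\exp\{\int_\ell^z(\lambda-\psi(u))/\omega(u)\,\ud u\}$, so $f_\lambda(x)=\int_0^\infty\rho(z)e^{-xz}\,\ud z$, and $q:=\omega\rho$, which satisfies the first-order equation $\omega q'=(\lambda-\psi)q$. One first checks that $f_\lambda$ is finite, strictly positive, strictly decreasing and smooth on $(0,\infty)$: since $\psi\le 0$ with $|\psi(u)|=O(u)$ and $\omega(u)\sim\tfrac{\sigma^2}{2}u^2$ at infinity, the exponent is $O(\log z)$, so $q$ grows only polynomially and $q(z)e^{-xz}\to 0$ as $z\to\infty$ for each $x>0$; near $z=0$ the exponent tends to $-\infty$, making $q$ and hence $q/\omega$ integrable there. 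Next, using that $A$ maps $e^{-xz}$ to $\psi(z)e^{-xz}$ and the elementary identity $\mathcal B e^{-xz}=x\,\omega(z)e^{-xz}=-\omega(z)\,\partial_z e^{-xz}$, one differentiates under the integral sign to obtain
\[
Gf_\lambda(x)=\int_0^\infty\rho(z)\big(\psi(z)e^{-xz}-\omega(z)\,\partial_z e^{-xz}\big)\,\ud z,
\]
and integrates the second term by parts in $z$; the boundary term at $0$ vanishes because $\omega(0)=0$, and the one at $+\infty$ by the decay of $q$. This yields $Gf_\lambda(x)=\int_0^\infty\big(\psi(z)\rho(z)+q'(z)\big)e^{-xz}\,\ud z$, and since $\psi\rho+q'=\psi\rho+(\lambda-\psi)q/\omega=\lambda\rho$ we conclude $Gf_\lambda=\lambda f_\lambda$.

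With this identity in hand, $M_t:=e^{-\lambda t}f_\lambda(R_t)$ is a local martingale; Itô's formula for jump diffusions applies since $f_\lambda$ is smooth, bounded and Lipschitz on $[a,\infty)$ and $\int_{(0,\infty)}(1\wedge z)\,\mu(\ud z)<\infty$. On $[0,\varsigma_a]$ we have $R_{t\wedge\varsigma_a}\ge a$, so by monotonicity of $f_\lambda$ one gets $0\le M_{t\wedge\varsigma_a}\le f_\lambda(a)<\infty$; hence the stopped process is a bounded true martingale and $\E_x[M_{t\wedge\varsigma_a}]=f_\lambda(x)$ for every $t\ge 0$. Letting $t\to\infty$: on $\{\varsigma_a<\infty\}$ we have $R_{\varsigma_a}=a$ (no negative jumps), so $M_{t\wedge\varsigma_a}\to e^{-\lambda\varsigma_a}f_\lambda(a)$; on $\{\varsigma_a=\infty\}$, $M_t\le e^{-\lambda t}f_\lambda(a)\to 0$ because $\lambda>0$. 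Dominated convergence gives $f_\lambda(a)\,\E_x\big[e^{-\lambda\varsigma_a}\mathbf 1_{\{\varsigma_a<\infty\}}\big]=f_\lambda(x)$, which is \eqref{eq_duhalde} after recalling $\varsigma_a=\int_0^{T_a}Z_s\,\ud s$ and the convention $e^{-\lambda\cdot\infty}=0$. The case $a=0$ follows by letting $a\downarrow 0$: $T_a\uparrow T_0$, so $\int_0^{T_a}Z_s\,\ud s\uparrow\int_0^{T_0}Z_s\,\ud s$, and $f_\lambda(a)\uparrow f_\lambda(0)\in(0,\infty]$, so both sides of \eqref{eq_duhalde} converge (with $f_\lambda(x)/\infty:=0$, which is consistent since then $Z$ does not reach $0$ and $\int_0^{T_0}Z_s\,\ud s=\infty$ a.s.).

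I expect the real work to be the analytic input behind $Gf_\lambda=\lambda f_\lambda$: pinning down the precise polynomial growth of $q(z)=\exp\{\int_\ell^z(\lambda-\psi(u))/\omega(u)\,\ud u\}$ as $z\to\infty$ — which is exactly what makes $f_\lambda(x)$ finite for $x>0$ and kills the boundary term in the integration by parts — controlling $q$ near $z=0$, and justifying the differentiation under the integral sign together with Itô's formula applied to $f_\lambda$ against the compensated jump part of $R$. The bounded-martingale and optional-stopping steps are then routine, once one knows that $R$ is conservative and that $\int_0^{T_a}Z_s\,\ud s$ coincides with the passage time $\varsigma_a$.
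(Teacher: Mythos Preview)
Your proof is correct and follows essentially the same route as the paper: use the Lamperti-type representation (Theorem~\ref{lamperti}) to turn $\int_0^{T_a}Z_s\,\ud s$ into the first-passage time $\varsigma_a$ of the CBI process $R$, and then identify its Laplace transform as $f_\lambda(x)/f_\lambda(a)$. The only difference is that the paper stops there and cites Theorem~1 of Duhalde--Foucart--Ma \cite{DFM} for the CBI hitting-time formula, whereas you reprove that result from scratch via the eigenfunction identity $Gf_\lambda=\lambda f_\lambda$ and optional stopping; your argument is precisely the one underlying the cited theorem.
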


Similarly to the case when the environment is fixed (i.e. $\sigma^2=0$), treated by  Lambert~\cite{Lambert2005},  we observe that  when $c>0$,  the process   $Z$ may have an invariant distribution which can be described explicitly. In order to do so,    we introduce the following notation.
 Let
\begin{equation}\label{defm}
m(\lambda):=\int_0^\lambda \frac{\psi(u)}{\omega(u)}\ud u, \qquad  \textrm{for}\quad\lambda \ge 0, 
\end{equation}
which is  well defined under the log-moment condition~\eqref{logcondition} and  $c>0$ (see  for instance Corollary 3.21 in Li~\cite{li2010}). 

The next Lemma is necessary for the description of the invariant distribution of $Z$, whenever it exists.

\begin{lemma} \label{lemma_mlambda} Assume that $\sigma^2, c>0$ and that the branching mechanism $\psi$, given by \eqref{subor}, satisfies the log-moment condition \eqref{logcondition}.
Then the following identity holds
\begin{equation}\label{eq_lemdecm}
-m(\lambda)=\frac{2}{\sigma^2}\int_0^\infty \Big(1-e^{-\lambda z}\Big)\frac{e^{-\frac{2c}{\sigma^2} z}}{z}\left(\delta +\int_0^z e^{\frac{2c}{\sigma^2} u}\bar{\mu}(u)\ud u \right)\ud z, 
\end{equation}
where $\bar{\mu}(x)=\mu(x,\infty)$, and 
\[
\int_{(0,\infty)} e^{-\lambda z}\nu(\ud z)=e^{m(\lambda)}, \qquad \lambda \ge 0,
\]
defines a unique probability measure $\nu$ on $(0,\infty)$ which is infinitely divisible. In addition, it is self-decomposable whenever $\bar\mu(0)\leq \delta$.
\end{lemma}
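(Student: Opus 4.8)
The plan is to argue in three steps: (i) establish the closed form \eqref{eq_lemdecm} for $-m$; (ii) read off from it that $-m$ is the Laplace exponent of a subordinator, which yields the existence, uniqueness and infinite divisibility of $\nu$; and (iii) differentiate the resulting L\'evy density to obtain self-decomposability under $\bar\mu(0)\le\delta$.

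For (i) I would first rewrite the branching mechanism through its tail. Since $1-e^{-zu}=\int_0^u z e^{-zt}\,\ud t$, Tonelli's theorem gives $\int_{(0,\infty)}(1-e^{-zu})\mu(\ud u)=z\int_0^\infty e^{-zt}\bar\mu(t)\,\ud t$, so that, writing $a:=2c/\sigma^2$ and $\omega(u)=\frac{\sigma^2}{2}u(a+u)$,
\[
\frac{\psi(u)}{\omega(u)}=-\frac{2}{\sigma^2}\,\frac{\delta+\int_0^\infty e^{-ut}\bar\mu(t)\,\ud t}{a+u},\qquad u>0.
\]
Using $\frac{1}{a+u}=\int_0^\infty e^{-(a+u)s}\,\ud s$ I would integrate this identity over $u\in(0,\lambda)$ and interchange the order of integration (legitimate because, after the global sign, every integrand is nonnegative). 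The $\delta$-part produces at once $\frac{2\delta}{\sigma^2}\int_0^\infty(1-e^{-\lambda z})\frac{e^{-az}}{z}\,\ud z$; for the $\bar\mu$-part one is led to $\int_0^\infty\bar\mu(t)\int_0^\infty e^{-as}\frac{1-e^{-\lambda(t+s)}}{t+s}\,\ud s\,\ud t$, and the substitution $w=t+s$ together with one further exchange of $t$ and $w$ turns the inner kernel into $\frac{e^{-aw}}{w}(1-e^{-\lambda w})\int_0^w e^{at}\bar\mu(t)\,\ud t$; summing the two contributions is exactly \eqref{eq_lemdecm}. The role of the log-moment condition \eqref{logcondition} here is to guarantee that $m(\lambda)$, equivalently $\int_{0+}\frac{\psi(z)}{z}\ud z$, is finite, so that all the integrals above converge.

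For (ii) set $\Phi:=-m$, so that \eqref{eq_lemdecm} reads $\Phi(\lambda)=\frac{2}{\sigma^2}\int_0^\infty(1-e^{-\lambda z})k(z)\,\ud z$ with $k(z):=\frac{e^{-az}}{z}\bigl(\delta+\int_0^z e^{au}\bar\mu(u)\,\ud u\bigr)\ge 0$. I would check that $\int_0^\infty(1\wedge z)k(z)\,\ud z<\infty$: near $0$ one has $z\,k(z)\le\delta+e^{az}\int_0^z\bar\mu(u)\,\ud u$, which stays bounded since $\int_0^1\bar\mu(u)\,\ud u=\int_{(0,\infty)}(1\wedge u)\mu(\ud u)<\infty$; for the tail, an exchange of integrals bounds $\int_1^\infty k(z)\,\ud z$ by a multiple of $1+\int_1^\infty u^{-1}\bar\mu(u)\,\ud u=1+\int_{(1,\infty)}\log u\,\mu(\ud u)<\infty$, i.e. again by \eqref{logcondition}. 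Hence $\Phi$ is the Laplace exponent of a driftless, non-killed subordinator (no linear term appears and $\Phi(0+)=m(0)=0$), and by the L\'evy--Khintchine representation of infinitely divisible laws on the half-line together with injectivity of the Laplace transform, $e^{m(\lambda)}=e^{-\Phi(\lambda)}$ is the Laplace transform of a unique infinitely divisible probability measure $\nu$ on $(0,\infty)$, whose L\'evy measure has density $\frac{2}{\sigma^2}k(z)$.

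For (iii) I would invoke the classical criterion (see Sato \cite{Sa}) that an infinitely divisible law on $(0,\infty)$ with L\'evy density $\rho$ is self-decomposable if and only if $z\mapsto z\rho(z)$ is non-increasing. Here $z\rho(z)=\frac{2}{\sigma^2}e^{-az}\bigl(\delta+\int_0^z e^{au}\bar\mu(u)\,\ud u\bigr)$, and differentiating,
\[
\frac{\ud}{\ud z}\bigl(z\rho(z)\bigr)=\frac{2}{\sigma^2}e^{-az}\Bigl(\bar\mu(z)-a\delta-a\int_0^z e^{au}\bar\mu(u)\,\ud u\Bigr),
\]
which, since $\bar\mu$ is non-increasing and the subtracted term is at least $a\delta$, is $\le0$ for every $z$ under the hypothesis $\bar\mu(0)\le\delta$ of the statement (this is the only place that hypothesis is used). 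The step I expect to require the most care is the bookkeeping of the iterated Tonelli exchanges and changes of variable in the proof of \eqref{eq_lemdecm}, together with the matching verification that \eqref{logcondition} supplies precisely the integrability those manipulations and the L\'evy-measure condition in (ii) need; parts (ii) and (iii) are then routine applications of standard facts on Bernstein functions and self-decomposable distributions.
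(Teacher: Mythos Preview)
Your approach to (i) and (ii) is correct. For (i) you take a shorter route than the paper: writing $\int_{(0,\infty)}(1-e^{-zu})\mu(\ud u)=z\int_0^\infty e^{-zt}\bar\mu(t)\,\ud t$ and $\frac{1}{a+u}=\int_0^\infty e^{-(a+u)s}\,\ud s$, two Tonelli swaps and the substitution $w=t+s$ give \eqref{eq_lemdecm} directly. The paper instead keeps the variable $u$ in $\mu(\ud u)$, applies the partial-fraction identity $\frac{K}{z(K+z)}=\frac{1}{z}-\frac{1}{K+z}$, and threads a six-step chain of Frullani-type manipulations (with two separate changes of variable) before a final Fubini brings $\bar\mu$ into the picture. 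Your argument is noticeably cleaner; the paper's, on the other hand, isolates the contribution of each fixed jump size $u$ to the L\'evy measure. Part (ii) is essentially identical to the paper's verification that $\int_{(0,\infty)}(1\wedge z)\Pi(\ud z)<\infty$.

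In (iii) there is a differentiation slip that prevents the argument from closing as written. With $z\rho(z)=\frac{2}{\sigma^2}e^{-az}\bigl(\delta+\int_0^z e^{au}\bar\mu(u)\,\ud u\bigr)$ the product rule gives
\[
\frac{\ud}{\ud z}\bigl(z\rho(z)\bigr)=\frac{2}{\sigma^2}e^{-az}\Bigl(e^{az}\bar\mu(z)-a\delta-a\int_0^z e^{au}\bar\mu(u)\,\ud u\Bigr),
\]
so your displayed derivative is missing the factor $e^{az}$ in front of $\bar\mu(z)$. With the correct expression, your bound ``the subtracted term is at least $a\delta$'' only yields non-positivity under $\bar\mu(0)\le a\delta$, which does not follow from the stated hypothesis $\bar\mu(0)\le\delta$ unless $a=2c/\sigma^2\ge 1$. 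A clean way to repair the step is to set $F(z)$ equal to the bracket above and observe that, since $\bar\mu$ is non-increasing, $F'(z)=e^{az}\bar\mu'(z)\le 0$ a.e., whence $F(z)\le F(0^+)=\bar\mu(0)-a\delta$; the monotonicity of $z\rho(z)$ therefore follows under the hypothesis $\bar\mu(0)\le a\delta=\tfrac{2c}{\sigma^2}\delta$. The paper does not spell this computation out either (it writes only ``a simple computation'' and, in the proof, invokes the condition $\bar\mu(0)\le b$ rather than $\bar\mu(0)\le\delta$), so you should record carefully the exact hypothesis your argument actually requires.
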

We recall that self-decomposable distributions on $(0,\infty)$ is a subclass of infinitely divisible distributions whose L\'evy measures have densities which are decreasing on $(0,\infty)$. We refer to Sato \cite{Sa} for further details on self-decomposable distributions. 

In order to  introduce the limiting distribution associated to $Z$, whenever it exists, we first provide conditions under which $\int_{(0,\infty)} s^{-1}\nu(\ud s)$ is finite. 
For any $z$ sufficiently small, we define two sequences of functions as follows
\begin{equation*}
\begin{aligned}
&l^{(1)}(z)=|\ln(z)|&& \text{and } \quad l^{(k)}(z)=\ln(l^{{(k-1)}}(z)), \qquad k\in \N, k\geq 2,\\
& I^{(1)}(z)=l^{(1)}(z)\int_0^z\bar\mu(w)\ud w&& \text{and } \quad I^{(k)}(z)= l^{(k)}(z)\left(I^{(k-1)}(z)-\frac{\sigma^2}{2}\right), \quad k\in \N, k\geq 2.
\end{aligned}
\end{equation*}
Observe  that for any $k\in \N$, $I^{(k)}(z)$ is well defined for $z$  sufficiently small. On the other hand $l^{(k)}(z)$ is well  defined for  both, $z$  sufficiently small and large. Then, for any continuous function $f$ taking values in $\R$, we set
$$
\texttt{Adh}(f)=\left[\liminf_{z\to 0} f(z), \limsup_{z\to 0} f(z) \right]\subset \R.
$$
We are now ready to establish the following  two conditions which will give the behaviour of $Z$ under the particular setting when $2\delta=\sigma^2$:
\[
(\partial)\;  \text{There exists } n\in \N \text{ such that } \inf({\tt Adh}(I^{(n)}))>\frac{\sigma^2}{2} \text{ and } {\tt Adh}(I^{(k)})=\left\{\frac{\sigma^2}{2}\right\}, \; \textrm{for all } k\in\{1,..,n-1\},
\]
\[
(\eth)\; \text{There exists } n\in \N \text{ such that } \sup({\tt Adh}(I^{(n)}))<\frac{\sigma^2}{2} \text{ and } {\tt Adh}(I^{(k)})=\left\{\frac{\sigma^2}{2}\right\}, \; \textrm{for all } k\in\{1,..,n-1\} .
\]
For instance if $\bar\mu(0)<\infty$ (i.e. $\psi$ is the Laplace exponent of a compound Poisson process)   condition $(\eth)$ holds.
These two conditions are exclusive conditions under which the process is either positive recurrent or null recurrent, that is to say  the process is recurrent and either it has an invariant probability measure or  not.

\begin{theorem}\label{prop_casesubordinator} Assume that $2\delta\ge \sigma^2>0$, $c>0$ . Then the point $0$ is polar, i.e. $
\mathbb{P}_x(T_0<\infty)=0$ for all $x>0$.

Moreover if 
\begin{equation}\label{cond_rec}
 \int_0^1\frac{\ud z}{z} \exp\left\{ -\int_z^1\int_0^{\infty}\frac{(1-e^{-u s})}{\omega(u)}\mu(\ud s)\ud u \right\}=\infty
\end{equation}
$Z$ is recurrent. Additionally, 
\begin{itemize}
\item[a)] if $2\delta>\sigma^2$ then the process $Z$ is positive recurrent. Its invariant distribution $\rho$ has a finite expected value if and only if \eqref{logcondition} holds. If the latter holds, then  $\rho$ is the size-biased distribution of $\nu$, in other words
\begin{equation}\label{def_rho}
\rho(\ud z)=\left(\int_{(0,\infty)} s^{-1}\nu(\ud s)\right)^{-1} z^{-1}\nu(\ud z), \qquad z>0,
\end{equation}
\item[b)] if  $2\delta=\sigma^2$ and \eqref{logcondition} holds, together 
\begin{itemize}
 \item[b.1)]  with  condition $(\partial)$, then $Z$ is positive recurrent and its invariant probability is defined by \eqref{def_rho},
 \item[b.2)]  or  with condition $(\eth)$, then the process $Z$ is null recurrent and converges to $0$ in probability.
\end{itemize}
\end{itemize}
Finally, if \eqref{cond_rec} is not satisfied, then $Z$ explode at finite time a.s. 
\end{theorem}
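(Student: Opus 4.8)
\section*{Proof proposal for Theorem~\ref{prop_casesubordinator}}

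The plan is to reduce everything to the process furnished by the Lamperti-type representation. In the subordinator case $\gamma=0$, and after absorbing the compensator of the small jumps, $Z$ solves $\ud Z_t=(\delta Z_t-cZ_t^2)\,\ud t+\sigma Z_t\,\ud B^{(e)}_t+\int_{(0,\infty)}\int_0^{Z_{t-}}z\,N^{(b)}(\ud t,\ud z,\ud u)$. The time change $\eta_s=\int_0^s\ud v/R_v$, $C=\eta^{-1}$, $Z_t=R_{C_t}$ then turns $Z$ into the process $R$ solving $\ud R_s=(\delta-cR_s)\,\ud s+\sigma\sqrt{R_s}\,\ud\widehat B_s+\ud X_s$, where $X$ is a driftless subordinator with L\'evy measure $\mu$; equivalently, $R$ is the continuous-state branching process with immigration whose branching mechanism is $\omega(\lambda)=c\lambda+\tfrac{\sigma^2}{2}\lambda^2$ and whose immigration mechanism is $-\psi$. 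The dictionary I will use is: $\{T_0<\infty\}=\{T^R_0<\infty\}$ up to null sets; explosion of $Z$ at a finite time is the event $\{\eta_\infty<\infty\}$, on which $R$ drifts to $+\infty$; and, by the change of variables $v=C_u$ in the occupation functional of $R_{C_\cdot}$, a $\sigma$-finite invariant measure $\pi$ of $R$ corresponds to the invariant measure $x^{-1}\pi(\ud x)$ of $Z$.

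For the polarity of $0$, I would use the comparison theorem for jump SDEs to get $R_s\ge V_s$ for all $s$, where $V$ is the CIR diffusion $\ud V_s=(\delta-cV_s)\,\ud s+\sigma\sqrt{V_s}\,\ud\widehat B_s$, $V_0=R_0>0$; Feller's test gives $V_s>0$ for all $s$ a.s.\ precisely because $2\delta\ge\sigma^2$, hence $T^R_0=\infty$ a.s.\ and $\P_x(T_0<\infty)=0$. For the recurrence/explosion dichotomy, first note that $c\le\omega(z)/z\le c+\sigma^2/2$ on $(0,1)$ and that $\exp\{-\int_z^1\delta u/\omega(u)\,\ud u\}$ is bounded there (since $c>0$), so, comparing integrands, the integral $\mathcal I$ of Theorem~\ref{teo1} and the left-hand side of \eqref{cond_rec} are finite or infinite together; thus \eqref{cond_rec} holds iff $Z$ is conservative. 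If \eqref{cond_rec} holds then $Z$ lives forever on $(0,\infty)$; It\^o's formula gives that the drift of $1/Z_t$ equals $c+(\sigma^2-\delta)/Z_t$ minus a jump term that vanishes as $Z_t\to\infty$ (by dominated convergence, using $\int(1\wedge u)\mu(\ud u)<\infty$), so a standard Lyapunov argument shows $Z$ cannot tend to $\infty$; being conservative, with $0$ polar and not drifting to $\infty$, $Z$ is recurrent (equivalently, via the dictionary, $R$ is recurrent, consistently with the classification of CBI processes in \cite{DFM}). If \eqref{cond_rec} fails then $Z$ is not conservative by Theorem~\ref{teo1}, so it explodes with positive probability; a zero--one argument (alternatively: $R$ is then transient with $\int^\infty\ud s/R_s<\infty$ a.s., hence $\eta_\infty<\infty$ a.s.) upgrades this to almost-sure explosion.

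To describe the invariant law, note that under \eqref{cond_rec} the recurrent process $R$ has a $\sigma$-finite invariant measure $\pi$, unique up to a constant, and hence $Z$ has invariant measure $x^{-1}\pi(\ud x)$. When \eqref{logcondition} holds, $m$ is finite and the stationary Laplace transform of the CBI $R$ equals $\exp\{-\int_0^\lambda(-\psi(u))/\omega(u)\,\ud u\}=e^{m(\lambda)}$, so by Lemma~\ref{lemma_mlambda} and \cite{li2010} one has $\pi=\nu$ and $\pi$ is a probability measure; when it fails, $\pi$ has infinite mass. Consequently $\int x\,\rho(\ud x)$ is proportional to $\pi((0,\infty))$, finite iff \eqref{logcondition} holds, in which case $\rho$ is the size-biased law \eqref{def_rho}. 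It remains to decide when $x^{-1}\pi(\ud x)$ is a finite measure, i.e.\ when $Z$ is positive recurrent. Near the origin $R$ behaves like a squared Bessel process of dimension $4\delta/\sigma^2$, so $\pi(\ud x)\asymp x^{2\delta/\sigma^2-1}\ud x$ there and $\int_{0+}x^{-1}\pi(\ud x)<\infty$ iff $2\delta>\sigma^2$; near $+\infty$ the extra factor $x^{-1}$ makes $x^{-1}\pi(\ud x)$ integrable (readable off the tail of $\nu$, or from conservativeness of $Z$). This settles part~(a). In the critical case $2\delta=\sigma^2$ the quantity $\int_{0+}x^{-1}\pi(\ud x)=\int_0^\infty e^{m(\lambda)}\,\ud\lambda=\int_{(0,\infty)}s^{-1}\nu(\ud s)$ is only logarithmically divergent at first order; expanding $m(\lambda)$ as $\lambda\to\infty$ from \eqref{eq_lemdecm} produces successive corrections governed by the iterated quantities $I^{(k)}$, and $(\partial)$ (resp.\ $(\eth)$) is exactly the statement that $\int_{(0,\infty)}s^{-1}\nu(\ud s)$ is finite (resp.\ infinite); this yields b.1) ($Z$ positive recurrent, $\rho$ as in \eqref{def_rho}) and b.2) (the invariant mass of $Z$ is infinite and concentrates near $0$, so $Z$ is null recurrent and, since $0$ is polar, $Z_t\to0$ in probability).

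I expect the main obstacle to be the critical case $2\delta=\sigma^2$: extracting the precise iterated criteria $(\partial)/(\eth)$ requires a full logarithmic-order asymptotic of $m(\lambda)$ as $\lambda\to\infty$ from \eqref{eq_lemdecm}, together with the companion real-analysis lemma identifying convergence of integrals of the form $\int_{0+}\tfrac{\ud z}{z}\exp\{-\tfrac{2}{\sigma^2}\int_z^1\tfrac1w\int_0^w\bar\mu(t)\,\ud t\,\ud w\}$ with condition $(\partial)$. A secondary difficulty is the bookkeeping for the time change---checking that recurrence, transience, the invariant measure and the explosion event transfer correctly between $Z$ and $R$, and the passage from positive-probability to almost-sure explosion.
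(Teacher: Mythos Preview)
Your overall strategy---reduce to the CBI process $R$ via the Lamperti-type representation and read off the behaviour of $Z$ from that of $R$---is the same as the paper's, and your identification of the critical case $2\delta=\sigma^2$ as the main obstacle is accurate. The tactical choices differ in a few places. For polarity you propose a comparison with the CIR diffusion and Feller's test; the paper instead invokes its Lemma~\ref{polarity}, which evaluates the Duhalde--Foucart--Ma integral criterion for the CBI and shows it reduces to $2\delta\ge\sigma^2$. For the invariant measure you go through the time-change dictionary $\rho(\ud x)\propto x^{-1}\pi(\ud x)$ with $\pi$ the stationary law of $R$; the paper bypasses this and works directly on $Z$ by plugging exponentials into the generator, obtaining the ODE $\psi\chi-\omega\chi'=0$ for the Laplace transform $\chi$ of $z\rho(\ud z)$, whence $\chi\propto e^{m}$. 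Both routes yield the same answer, but the paper's avoids having to justify the transfer of invariant measures under time change. For positive recurrence when $2\delta>\sigma^2$ you argue heuristically via the squared-Bessel behaviour of $\pi$ near $0$; the paper instead proves $\int^\infty e^{m(\lambda)}\,\ud\lambda<\infty$ by bounding $e^{m(\lambda)}\le K\lambda^{-2\delta/\sigma^2}$ directly from the expansion~\eqref{eq_mh}, which is cleaner and extends to the case without~\eqref{logcondition} (there the paper takes $\theta=1$ in the formula for $\chi$ and bounds $\int\chi$ explicitly). Your Lyapunov argument via $1/Z_t$ is unnecessary: once $R$ is recurrent (from~\cite{DFM}) the paper simply shows $\eta_\infty=\infty$ via renewal of excursions of $R$ between levels $1$ and $2$, so $Z=R_C$ inherits recurrence for free.

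The place where you would genuinely need to do more work is exactly what you flag: for $2\delta=\sigma^2$ the paper carries out a full asymptotic expansion of $-m(\lambda)$ at $\infty$, splitting the integral in~\eqref{eq_mh} over $(0,x/\lambda]$, $(x/\lambda,x]$, $(x,\infty)$ and showing that only the middle piece contributes the iterated logarithms. It then introduces auxiliary functions $\bar I^{(k)}$ differing from $I^{(k)}$ by $h(z)$ versus $\int_0^z\bar\mu$, proves they have the same adherence sets, and integrates explicitly to get $e^{m(\lambda)}\le K\big/\lambda\prod_{i<n} l^{(i)}(\lambda)(l^{(n)}(\lambda))^{A-\varepsilon}$. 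This is the concrete content behind your sentence ``expanding $m(\lambda)$ \dots produces successive corrections governed by the iterated quantities $I^{(k)}$''; the paper's computation is several pages and is not a routine exercise.
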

It is important to note that  \eqref{cond_rec} is satisfied as soon as \eqref{logcondition} holds.  We also point out that the previous results are consistent with  the behaviours found in Proposition 2.1 in Evans et al. \cite{EHS} where $\psi(z)=-bz$.

\subsection{General case}
Finally, we consider the case when the process $X$ is not a subordinator, in other words the branching mechanism  $\psi$ satisfies that there exist $\vartheta \geq 0$ such that $\psi(z)>0$ for any $z\geq \zmin$. For simplicity, we say that the branching mechanism $\psi$ is general if it satisfies the previous assumptions.

In the sequel we assume that $c>0$ and that the L\'evy measure  associated to the  general branching mechanism $\psi$ satisfies  the log-moment condition \eqref{logcondition}. Our main result in this section provides a complete characterization of the  Laplace transform of the stopping times 
\[
T_a=\inf\{t\ge 0: Z_t\le a\}, \quad \text{for} \quad a\geq 0,
\]  
as long as $T_0$ is finite a.s. To this aim, we  introduce  the functional 
\begin{equation}\label{defI}
 {\tt I}(\lambda):=\int_0^\lambda e^{m(u)}\ud u, \qquad \textrm{for}\quad \lambda\ge 0,
\end{equation}
where $m$ is defined by \eqref{defm} and well posed under the log-moment \eqref{logcondition}. Observe from our assumptions that $m$ is increasing on $(\zmin,\infty)$ implying that ${\tt I}(\cdot)$ is a bijection from $\R_+$ into itself. We denote its inverse by $\varphi$ and a simple computation provides
 \begin{equation}
 \label{eq_derphi}
 \varphi'(z)=\exp(-m\circ \varphi(z)).
\end{equation}
The formulation of the Laplace transform of $T_a$ will be written in terms of the solution to a Ricatti equation.
Similarly to  Lemma 2.1 in \cite{Lambert2005}, we deduce the following Lemma on the Ricatti equation of our  interest.
 \begin{lemma}
 \label{lemma_hq}
 For any $\lambda>0$, there exists a unique non-negative solution $y_\lambda$ to the equation
 \begin{equation}
 \label{eq_h}
 y'=y^2-\lambda r^2,
 \end{equation}
where $r(z)=\frac{ \varphi'(z)}{\sqrt{\omega(\varphi(z))}}$ such that it vanishes at $\infty$. Moreover, $y_\lambda$ is positive on $(0,\infty)$, and for any $z$ sufficiently small or
large, $y_\lambda(z) \leq \sqrt{\lambda}r(z)$. As a consequence, $y_\lambda$ is integrable at $0$, and it decreases initially and ultimately.
 \end{lemma}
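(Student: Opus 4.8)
The plan is to linearise the Riccati equation \eqref{eq_h} and then exploit the sign and the boundary behaviour of the coefficient $r$, following the scheme of Lemma~2.1 in \cite{Lambert2005}. First I would put $y=-u'/u$, which turns \eqref{eq_h} into the linear, disconjugate second order equation $u''=\lambda r^2u$ on $(0,\infty)$; since $\lambda r^2\ge 0$, no nontrivial solution has two zeros and every positive solution is convex. Moreover $r^2$ is integrable at $+\infty$: by \eqref{eq_derphi} and the substitution $s=\varphi(z)$ one gets $\int^\infty r(z)^2\,\ud z=\int^\infty \omega(s)^{-1}e^{-m(s)}\,\ud s$, which is finite because $\omega(s)\ge\tfrac12\sigma^2 s^2$ and $m$ is increasing on $(\zmin,\infty)$ (so $e^{-m}$ is bounded near $+\infty$). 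Standard facts on disconjugate equations then provide a solution $u$ that is positive and decreasing on $(0,\infty)$ with $u'(z)\to 0$ as $z\to\infty$ (for instance the principal solution at $\infty$, or a locally uniform limit of the solutions normalised by $u_n(n)=1$, $u_n'(n)=0$); since $u''>0$ wherever $u>0$, $u'$ is strictly increasing, hence $u'<0$ on all of $(0,\infty)$. Setting $y_\lambda:=-u'/u$ yields a solution of \eqref{eq_h} that is strictly positive on $(0,\infty)$, and from $-u'(z)=\int_z^\infty\lambda r^2u\le u(z)\,\lambda\int_z^\infty r^2$ we obtain $y_\lambda(z)\le\lambda\int_z^\infty r(t)^2\,\ud t\to 0$, so $y_\lambda$ vanishes at $\infty$.

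For uniqueness, if $y_1,y_2$ are non-negative solutions of \eqref{eq_h} vanishing at $\infty$, then $w:=y_1-y_2$ satisfies $w'=(y_1+y_2)w$, so $|w(z)|=|w(z_0)|\exp\{\int_{z_0}^z(y_1+y_2)\}$ is non-decreasing in $z$; since $w\to 0$ this forces $w\equiv 0$.

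Next I would establish $y_\lambda\le\sqrt\lambda\,r$ near $0$ and near $\infty$, after checking the monotonicity of $r$. Differentiating $\ln r=\ln\varphi'-\tfrac12\ln(\omega\circ\varphi)$ and using \eqref{eq_derphi} together with $m'=\psi/\omega$ gives $r'/r=-\tfrac{\varphi'}{\omega(\varphi)}\big(\psi(\varphi)+\tfrac12\omega'(\varphi)\big)$; as $\psi(0)=0$, $c>0$ and $\psi>0$ on $(\zmin,\infty)$, the bracket is strictly positive for all small and all large $z$, so $r$ is strictly decreasing there. On any interval where $r'\le 0$ the function $\sqrt\lambda\,r$ is a subsolution of \eqref{eq_h}, since $(\sqrt\lambda r)'=\sqrt\lambda r'\le 0=(\sqrt\lambda r)^2-\lambda r^2$; hence $\delta:=y_\lambda-\sqrt\lambda r$ satisfies $\delta'\ge(y_\lambda+\sqrt\lambda r)\,\delta$ there. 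Near $\infty$ this gives: if $\delta(z_0)>0$ then $\delta(z)\ge\delta(z_0)>0$ for all $z\ge z_0$, contradicting $y_\lambda,r\to 0$; so $y_\lambda\le\sqrt\lambda r$ for $z$ large. Near $0$ the same comparison cannot be propagated from $\infty$ because $r$ need not be monotone on an intermediate region, so I would argue directly at the level of $u$: since $\varphi(z)\sim z$ and $\omega(\varphi(z))\sim cz$ as $z\to 0$ we have $r(z)\sim(cz)^{-1/2}$, while the substitution $s=\varphi(t)$ gives $\int_z^{z_1}r(t)^2\,\ud t=\int_{\varphi(z)}^{\varphi(z_1)}\omega(s)^{-1}e^{-m(s)}\,\ud s=c^{-1}|\ln z|\,(1+o(1))$. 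Inserting this into $-u'(t)=-u'(z_1)+\lambda\int_t^{z_1}r^2u$ and $u(z)=u(z_1)+\int_z^{z_1}(-u')$, a Gr\"onwall argument using the integrability of $|\ln z|$ at $0$ shows that $u$ stays between two positive constants near $0$; then $-u'(z)=O(|\ln z|)=o(z^{-1/2})\le\sqrt\lambda\,r(z)\,u(z)$ for small $z$, i.e. $y_\lambda(z)\le\sqrt\lambda\,r(z)$.

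The remaining claims then follow at once: on the two neighbourhoods where $y_\lambda\le\sqrt\lambda r$ one has $y_\lambda'=y_\lambda^2-\lambda r^2\le 0$, so $y_\lambda$ decreases initially and ultimately, and since $r(z)\sim(cz)^{-1/2}$ is integrable at $0$ so is $y_\lambda$. I expect the main obstacle to be the behaviour at the singular endpoint $0$: controlling $u$ (equivalently $y_\lambda$) there requires the precise asymptotics $r(z)\asymp(cz)^{-1/2}$ and the merely logarithmic growth of $\int_{\cdot}r^2$, and this is exactly where the standing hypothesis $c>0$ is genuinely used; everything else is a routine adaptation of the argument of \cite{Lambert2005}.
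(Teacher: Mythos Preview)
The paper does not actually prove this lemma: it simply states that the result follows ``similarly to Lemma~2.1 in \cite{Lambert2005}''. Your proposal is a correct and complete realisation of that reference, with the necessary adaptations to the present setting where $\omega(s)=cs+\tfrac{\sigma^2}{2}s^2$ replaces the linear $cs$ of \cite{Lambert2005}; in particular your computation $\int^\infty r^2=\int^\infty \omega(s)^{-1}e^{-m(s)}\,\ud s<\infty$ (using $\sigma>0$) and the asymptotics $r(z)\sim(cz)^{-1/2}$, $\int_{\cdot}^{z_1}r^2\sim c^{-1}|\ln z|$ near $0$ (using $c>0$) are exactly the two places where the argument must be checked anew, and you handle both correctly.

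One minor remark: your treatment near $0$ via a Gr\"onwall bound on $u$ is more explicit than what is typically written in \cite{Lambert2005}, where the inequality $y_\lambda\le\sqrt\lambda\,r$ is usually obtained by a direct comparison argument on the Riccati equation itself (exploiting that $r$ is decreasing near $0$ and that any solution with $y(z_0)>\sqrt\lambda\,r(z_0)$ blows up in finite backward time, which is incompatible with the global solution $-u'/u$). Both routes work; yours has the advantage of giving the sharper asymptotic $y_\lambda(z)=O(|\ln z|)$ rather than merely $O(z^{-1/2})$.
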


We now state our last result. Recall that the infinitesimal generator $\mathcal{U}$  of the process $Z$ satisfies  that for any $f\in C^2$, 
\begin{equation}\label{infgenerator}
\mathcal{U} f(x)=(bx-cx^2)f^\prime(x)+\left(\gamma^2 x+\frac{\sigma^2}{2}x^2\right)f^{\prime\prime}(x)+x\int_{(0,\infty)}\left(f(x+z)-f(x)-zf^\prime(x)\mathbf{1}_{\{z<1\}}\right)\mu(\ud z),
\end{equation}
see for instance Theorem 1 in Palau and Pardo \cite{PP}.

\begin{theorem}
\label{theo_Tatteinte} Let $c>0$ and assume  that the branching mechanism $\psi$ is general and its associated L\'evy measure  satisfies the log-moment condition \eqref{logcondition}. Hence the function
\begin{equation}\label{eq-theo}
h_{\lambda}(x):=1+\lambda \int_0^{\infty} \frac{e^{-xz-m(z)}}{\omega(z)} \exp\left\{ -\int_0^{{\tt I}(z)}y_\lambda(v)\ud v \right\} \int_0^z \exp\left\{ m(u)+2\int_0^{{\tt I}(u)}y_\lambda(v)\ud v \right\}\ud u \ud z
\end{equation}
is well defined and positive for any $x>0$ and $\lambda > 0$ and it is a non-increasing $C^2$-function on $(0,\infty)$. Moreover it solves
\begin{equation}
\label{eq_vpgenerator}
\mathcal{U} h_\lambda(x)=\lambda h_\lambda(x), \text{ for any } x>0.
\end{equation}
Furthermore, if 
$\mathbb{P}_x(T_0<\infty)=1$, for any $x>0$
then  $h_\lambda$ is also well-defined at $0$ with
$$h_\lambda(0)=\exp\left\{\int_0^{\infty} y_\lambda(v)dv\right\}<\infty,$$
and, for any $x\geq a\geq 0$,
\begin{equation}
\label{eq_resultLaplace}
\E_x\left[ e^{-\lambda T_a}\right]=\frac{h_\lambda(x)}{h_\lambda(a)}.
\end{equation}
In particular,  for any $x>0$,
\begin{equation}
 \label{eq_resultexp}
 \E_x[T_0]=\int_0^{\infty}\ud u \,e^{m(u)}\int_u^{\infty}\frac{e^{-m(z)}}{\omega(z)}(1-e^{-zx})\ud z.
\end{equation}
\end{theorem}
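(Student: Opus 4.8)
The plan is to separate the purely analytic content of the statement — that $h_\lambda$ is a well-defined, positive, non-increasing $C^2$ eigenfunction of $\mathcal{U}$ on $(0,\infty)$ — from the probabilistic content, which will be proved under the hypothesis $\mathbb{P}_x(T_0<\infty)=1$. For the analytic part I would first write $h_\lambda(x)=1+\int_0^\infty e^{-xz}g_\lambda(z)\,\ud z$ with
\[
g_\lambda(z):=\frac{\lambda}{\omega(z)}\,e^{-m(z)-P(z)}Q(z),\qquad P(z):=\int_0^{{\tt I}(z)}y_\lambda(v)\,\ud v,\qquad Q(z):=\int_0^z e^{m(u)+2P(u)}\,\ud u,
\]
and control $g_\lambda$ near $0$ and $\infty$ using Lemma~\ref{lemma_hq}. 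Near $0$ one has $m,P\to0$, $Q(z)\sim z$ and $\omega(z)\sim cz$, so $g_\lambda(z)\to\lambda/c$; near $\infty$, the bound $y_\lambda\le\sqrt\lambda\,r$ together with the identity $\int_0^v r(w)\,\ud w=\int_0^{\varphi(v)}\omega(s)^{-1/2}\,\ud s$, which diverges only logarithmically, gives $P(z)=O(\log z)$ and $P'(z)\le\sqrt\lambda\,\omega(z)^{-1/2}=O(1/z)$, whence (using $Q(z)\le e^{2P(z)}\int_0^z e^{m(u)}\ud u\le z\,e^{m(z)+2P(z)}$ for $z$ large) $g_\lambda(z)\le\lambda z\,e^{P(z)}/\omega(z)$, and similarly $g_\lambda'$ and $g_\lambda''$, are of at most polynomial growth. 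Therefore $\int_0^\infty z^k e^{-xz}g_\lambda(z)\,\ud z<\infty$ for all $k\ge0$ and all $x>0$, so $h_\lambda$ is finite and $C^\infty$ on $(0,\infty)$, with $h_\lambda'(x)=-\int_0^\infty z e^{-xz}g_\lambda(z)\,\ud z<0$ (hence non-increasing and, being $>1$, positive) and $h_\lambda(\infty)=1$, and the Fubini and integration-by-parts manipulations used below are legitimate.

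For the eigenvalue equation, the starting point is the elementary identity $\mathcal{U}e^{-xz}=\psi(z)\,xe^{-xz}+\omega(z)\,x^2e^{-xz}$, immediate from \eqref{LevyKinthcine}, \eqref{infgenerator} and the definition of $\omega$ (and $\mathcal{U}1=0$). Integrating it against $g_\lambda(z)\,\ud z$, writing $xe^{-xz}=-\partial_z e^{-xz}$ and $x^2e^{-xz}=\partial_z^2 e^{-xz}$ and integrating by parts (once, resp.\ twice), the boundary terms at $\infty$ vanish by the decay of $g_\lambda$, while those at $0$ collapse — using $\psi(0)=\omega(0)=0$, $g_\lambda(0+)=\lambda/c$ and $\omega(z)g_\lambda'(z)\to0$ — to the single constant $\omega'(0)\,g_\lambda(0+)=\lambda$. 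Since $\psi/\omega=m'$, this yields $\mathcal{U}h_\lambda(x)=\lambda+\int_0^\infty e^{-xz}\big[(m'G)'+G''\big]\ud z$ with $G:=\omega g_\lambda=\lambda e^{-m-P}Q$. A direct computation gives $G'=-(m'+P')G+\lambda e^{P}$ and then $(m'G)'+G''=\big(-P''+m'P'+(P')^2\big)G$, and from $P'(z)=y_\lambda({\tt I}(z))e^{m(z)}$, ${\tt I}'(z)=e^{m(z)}$ and \eqref{eq_derphi} one has $r({\tt I}(z))^2=\omega(z)^{-1}e^{-2m(z)}$, so that the Riccati equation \eqref{eq_h} forces $-P''+m'P'+(P')^2=e^{2m}\big(y_\lambda^2-y_\lambda'\big)\big|_{{\tt I}(z)}=\lambda/\omega$. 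Hence $(m'G)'+G''=\lambda G/\omega=\lambda g_\lambda$, so $\mathcal{U}h_\lambda=\lambda h_\lambda$ on $(0,\infty)$, which is \eqref{eq_vpgenerator}.

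Now assume $\mathbb{P}_x(T_0<\infty)=1$. Since $Z$ has no negative jumps it hits every level continuously from above, so $Z_{T_a}=a$ and $T_a\le T_0<\infty$ a.s.\ for every $a\ge0$, and $Z$ is conservative. Fixing $a>0$, It\^o's formula together with \eqref{eq_vpgenerator} makes $M_t:=e^{-\lambda(t\wedge T_a)}h_\lambda(Z_{t\wedge T_a})$ a local martingale, bounded by $h_\lambda(a)$ since $h_\lambda$ is non-increasing and $Z_{t\wedge T_a}\ge a$, hence a uniformly integrable martingale; letting $t\to\infty$ and using $Z_{T_a}=a$, bounded convergence gives $h_\lambda(x)=h_\lambda(a)\,\E_x[e^{-\lambda T_a}]$ for $x\ge a$. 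Letting $a\downarrow0$ (monotone convergence, with $S:=\lim_{a\downarrow0}T_a\le T_0<\infty$) shows $h_\lambda(0+)=h_\lambda(x)/\E_x[e^{-\lambda S}]<\infty$, and re-running the now bounded martingale with $a=0$ gives \eqref{eq_resultLaplace} for all $x\ge a\ge0$. To evaluate $h_\lambda(0)=1+\int_0^\infty g_\lambda(z)\,\ud z$ (monotone convergence), I would substitute $v={\tt I}(z)$ to rewrite the integral as $\lambda\int_0^\infty r(v)^2 e^{-\widetilde P(v)}\big(\int_0^v e^{2\widetilde P}\big)\ud v$ with $\widetilde P(v):=\int_0^v y_\lambda$, then replace $\lambda r^2=y_\lambda^2-y_\lambda'$ from \eqref{eq_h} and integrate by parts: the boundary term at $0$ vanishes because $y_\lambda$ is integrable there, and at $\infty$ it vanishes because $y_\lambda$ is ultimately decreasing, which forces $v\,y_\lambda(v)\to0$ whenever $\int_0^\infty y_\lambda<\infty$ (while if $\int_0^\infty y_\lambda=\infty$ both sides are infinite). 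This leaves $h_\lambda(0)-1=\int_0^\infty y_\lambda(v)e^{\widetilde P(v)}\,\ud v=e^{\int_0^\infty y_\lambda}-1$, i.e.\ $h_\lambda(0)=\exp\{\int_0^\infty y_\lambda\}$, finite by the previous step.

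Finally, since $\lambda^{-1}\big(1-\E_x[e^{-\lambda T_0}]\big)\uparrow\E_x[T_0]$ as $\lambda\downarrow0$ and $h_\lambda(0)\to1$ (because $y_\lambda\to0$), I would pass to the limit using $P\to0$, $Q(z)\to\int_0^z e^{m(u)}\,\ud u$ and monotone convergence to obtain
\[
\E_x[T_0]=\lim_{\lambda\downarrow0}\frac{h_\lambda(0)-h_\lambda(x)}{\lambda}=\int_0^\infty\frac{e^{-m(z)}}{\omega(z)}\big(1-e^{-xz}\big)\Big(\int_0^z e^{m(u)}\,\ud u\Big)\ud z,
\]
whereupon Fubini's theorem converts this into \eqref{eq_resultexp}. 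The step I expect to be the main obstacle is the package of asymptotic estimates for $y_\lambda$, $P$ and $Q$ extracted from Lemma~\ref{lemma_hq}: the logarithmic control of $P$ near $\infty$, which is precisely what makes $h_\lambda$ finite for $x>0$, and the fact that $v\,y_\lambda(v)\to0$ at infinity, which pins down the value $h_\lambda(0)$; by comparison the martingale argument is routine once spectral positivity is invoked to guarantee $Z_{T_a}=a$.
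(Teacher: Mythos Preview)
Your analytic verification of the eigenvalue identity and your martingale/optional-stopping argument are essentially the same as the paper's Steps~1,~2 and~5, so on those points there is nothing to add. There are, however, two genuine methodological differences worth recording.

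\medskip
\textbf{Finiteness of $\int_0^\infty y_\lambda$.} The paper devotes a separate Step~3 to this: it introduces the resolvent-type function $G_{\lambda,x}(v)=\int_0^\infty e^{-\lambda t}\E_x[e^{-vZ_t}]\,\ud t$, shows it solves a second-order ODE in $v$ with boundary data $\lambda G_{\lambda,x}(0)=1$ and $G_{\lambda,x}'(\infty)=0$, constructs the explicit solution $k$ involving $y_\lambda$, proves uniqueness, and reads off $\E_x[e^{-\lambda T_0}]=\lim_{v\to\infty}\lambda G_{\lambda,x}(v)=e^{-\int_0^\infty y_\lambda}h_\lambda(x)$, which is positive by hypothesis. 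Your route is more economical: you first run the bounded martingale for $a>0$, then let $a\downarrow 0$; spectral positivity gives $T_a\uparrow T_0<\infty$ and hence $h_\lambda(0+)=h_\lambda(x)/\E_x[e^{-\lambda T_0}]<\infty$, after which the change of variables $v={\tt I}(z)$ and the Riccati identity yield $h_\lambda(0)=e^{\int_0^\infty y_\lambda}$, so the integral is finite. This avoids the resolvent machinery entirely; the paper's approach, on the other hand, identifies $\E_x[e^{-\lambda T_0}]$ \emph{before} the martingale step, which allows it to treat $a=0$ directly rather than by a limit.

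\medskip
\textbf{The mean $\E_x[T_0]$.} Here your argument is genuinely different from the paper's and carries a small gap. The paper (Step~6) works with $H(t,\lambda)=\E_x[e^{-\lambda Z_t}]$, derives from the generator the PDE $\partial_t H=-\psi(\lambda)\partial_\lambda H+\omega(\lambda)\partial_\lambda^2 H$, integrates first in $\lambda$ and then in $t$ to obtain the closed form $\int_0^\infty(1-H(t,\lambda))\,\ud t=\int_0^\lambda e^{m(u)}\int_u^\infty \omega(z)^{-1}e^{-m(z)}(1-e^{-zx})\,\ud z\,\ud u$, and lets $\lambda\to\infty$. You instead take $\lambda\downarrow 0$ in $(h_\lambda(0)-h_\lambda(x))/(\lambda h_\lambda(0))$. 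For this you need $h_\lambda(0)\to 1$, i.e.\ $\int_0^\infty y_\lambda\to 0$; saying ``because $y_\lambda\to 0$'' is not quite enough, since pointwise convergence does not imply convergence of the integral without a uniform bound. This can be fixed (e.g.\ a comparison argument for the Riccati equation gives monotonicity of $y_\lambda$ in $\lambda$, hence monotone convergence applies), but as written it is the one place where your proof is not complete. The paper's PDE route sidesteps the issue.
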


It is important to note that under the assumptions of Theorem 1.2 in \cite{LP1}, the previous result  can be applied. To be more precise,   according to Theorem 1.2 in \cite{LP1}  if $\psi$ satisfies Grey's condition \eqref{grey}  together with \eqref{integralcond1} (i.e. $|\psi'(0+)|<\infty$), then $\E_x[T_0]<\infty$, for any starting point $x\ge 0$. In other words, under these assumptions, the results of Theorem~\ref{theo_Tatteinte} apply and moreover the logistic branching process in a Brownian environment $Z$ is Feller and comes down from infinity as it is stated in the following Corollary.
Formally,  we define the property of {\it coming down from infinity} in the sense that $\infty$ is a {\it continuous entrance point}, i.e.
\[
\lim_{a\to \infty}\lim_{x\to \infty}\mathbb{P}_x(T_a<t)=1 \qquad \textrm{for all} \quad t>0, 
\]
 and 
the original process can be extended into a Feller process on  $[0,\infty]$  (see for instance Theorem 20.13 in Kallenberg~\cite{Kallenberg} for the diffusion case  or Definition 2.2 for Feller processes in \cite{DK}). 
\begin{corollary}\label{corollary1} Assume that $c>0$ and that the branching mechanism $\psi$ is general and satisfies \eqref{integralcond1} and Grey's condition \eqref{grey}. Then the logistic branching process in a Brownian environment $Z$ is Feller and  the  boundary point $\infty$ is a continuous entrance point. Moreover, the process $Z$ can be extended into a Feller process on $[0,\infty]$ and, in particular,    we have
\[
\E_\infty\left[ e^{-\lambda T_a}\right]=\frac{1}{h_\lambda(a)}\qquad \textrm{ and }\qquad\E_\infty[T_0]=\int_0^{\infty}\ud u \,e^{m(u)}\int_u^{\infty}\frac{e^{-m(z)}}{\omega(z)}\ud z.
\]
\end{corollary}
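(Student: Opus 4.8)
The plan is to derive Corollary~\ref{corollary1} from Theorem~\ref{theo_Tatteinte} together with Theorem 1.2 in \cite{LP1} by a monotone passage to the limit $x\to\infty$, after first checking that $\mathbb{P}_x(T_0<\infty)=1$ for every $x>0$, so that the hypotheses of Theorem~\ref{theo_Tatteinte} are met. Under the stated assumptions — $c>0$, $\psi$ general, $|\psi'(0+)|<\infty$, and Grey's condition \eqref{grey} — Theorem 1.2 of \cite{LP1} gives that $\E_x[T_0]<\infty$ for all $x\ge 0$, whence in particular $\mathbb{P}_x(T_0<\infty)=1$. Thus \eqref{eq_resultLaplace} and \eqref{eq_resultexp} hold, and moreover $h_\lambda(0)=\exp\{\int_0^\infty y_\lambda(v)\ud v\}<\infty$.

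Next I would identify the candidate entrance law. From \eqref{eq-theo}, $h_\lambda(x)$ is built from the integrand $e^{-xz-m(z)}/\omega(z)\cdot(\ldots)$, which tends to $0$ pointwise in $z>0$ as $x\to\infty$; dominated convergence (the integral being finite at $x=0$ by hypothesis, hence dominating all $x>0$) gives $\lim_{x\to\infty}h_\lambda(x)=1$. Combined with \eqref{eq_resultLaplace} this yields, formally, $\lim_{x\to\infty}\E_x[e^{-\lambda T_a}]=1/h_\lambda(a)$. To make this rigorous as a statement about a genuine limiting process I would invoke the stochastic monotonicity of $Z$ in its initial condition (a standard coupling/comparison for the SDE \eqref{SDEBL}, or the fact that $x\mapsto\E_x[e^{-\lambda T_a}]$ is non-increasing, which is itself visible from $h_\lambda$ being non-increasing as asserted in Theorem~\ref{theo_Tatteinte}): the family $(\mathbb{P}_x)_{x>0}$ is stochastically increasing, $T_a$ is stochastically increasing in $x$, and $\E_x[e^{-\lambda T_a}]$ decreases to the limit $1/h_\lambda(a)$, which is strictly positive since $h_\lambda(a)<\infty$. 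Monotone convergence then also gives $\E_\infty[T_0]=\lim_{x\to\infty}\E_x[T_0]$, and letting $x\to\infty$ in \eqref{eq_resultexp} by monotone convergence (the integrand increases to $e^{m(u)}\cdot e^{-m(z)}/\omega(z)$) produces the claimed formula for $\E_\infty[T_0]$; finiteness of this double integral is exactly the $x=\infty$ case of the $\E_x[T_0]<\infty$ bound, or can be read off from the integrability of $y_\lambda$ at $0$ in Lemma~\ref{lemma_hq}.

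The remaining point is to upgrade ``$\E_x[e^{-\lambda T_a}]\to 1/h_\lambda(a)$ for every $a$'' to the statement that $\infty$ is a continuous entrance point and that $Z$ extends to a Feller process on $[0,\infty]$. For the entrance-point property one writes $\mathbb{P}_x(T_a<t)\ge 1-e^{\lambda t}\E_x[e^{-\lambda T_a}]$ (Markov's inequality applied to $e^{-\lambda T_a}$), lets $x\to\infty$ to get $\liminf_{x\to\infty}\mathbb{P}_x(T_a<t)\ge 1-e^{\lambda t}/h_\lambda(a)$, and then $a\to\infty$: since $h_\lambda(a)\to\infty$ (because $1/h_\lambda(a)=\E_\infty[e^{-\lambda T_a}]\to 0$ as $a\to\infty$, the time to come down below a higher level being longer; alternatively directly from \eqref{eq-theo}, $h_\lambda(a)\ge 1+\lambda a\int\cdots\to\infty$), the right-hand side tends to $1$, giving $\lim_{a\to\infty}\lim_{x\to\infty}\mathbb{P}_x(T_a<t)=1$. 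The Feller extension then follows from the general machinery for one-dimensional branching-type processes coming down from infinity — I would cite Definition~2.2 and the corresponding construction in \cite{DK} (or Theorem~20.13 in \cite{Kallenberg} in the purely diffusive case): the entrance law at $\infty$ is the one whose Laplace functional of hitting times is $1/h_\lambda(a)$, and the strong Markov property plus the continuity of $a\mapsto h_\lambda(a)$ and the entrance property give the Feller property on $[0,\infty]$. The main obstacle I anticipate is the last step — rigorously constructing the Feller extension and the law $\mathbb{P}_\infty$ rather than merely exhibiting the candidate Laplace transforms — which requires either appealing precisely to the framework of \cite{DK}/\cite{DFM} or reproducing a short Ray–Knight-type compactification argument; the analytic passages to the limit are routine given Theorem~\ref{theo_Tatteinte} and the monotonicity in the starting point.
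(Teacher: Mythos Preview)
Your overall plan coincides with the paper's: the paper gives no separate proof of the corollary beyond the remark preceding it, namely that Theorem~1.2 of \cite{LP1} yields $\E_x[T_0]<\infty$ (hence $\P_x(T_0<\infty)=1$, the Feller property, and coming down from infinity), so that Theorem~\ref{theo_Tatteinte} applies and one lets $x\to\infty$ using $h_\lambda(x)\to 1$. Your added detail on the monotone passage to the limit in \eqref{eq_resultLaplace} and \eqref{eq_resultexp} is correct.

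However, your direct check of the entrance condition contains two errors that happen to cancel. First, the inequality $\P_x(T_a<t)\ge 1-e^{\lambda t}\E_x[e^{-\lambda T_a}]$ is not a valid Markov-type bound (the right-hand side is negative for small $t$, and in any case a small Laplace transform indicates \emph{large} $T_a$, not small). The elementary bound you want is
\[
\E_x\big[e^{-\lambda T_a}\big]\le \P_x(T_a<t)+e^{-\lambda t},\qquad\text{so}\qquad \P_x(T_a<t)\ge \E_x\big[e^{-\lambda T_a}\big]-e^{-\lambda t}.
\]
Second, you assert $h_\lambda(a)\to\infty$ as $a\to\infty$, but $h_\lambda$ is non-increasing with $h_\lambda(a)=1+\lambda\int_0^\infty e^{-az}\Phi(z)\,\ud z\to 1$; equivalently $\E_\infty[e^{-\lambda T_a}]=1/h_\lambda(a)\to 1$, since from $\infty$ it takes \emph{less} time to reach a higher level $a$. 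With the corrected inequality and limit, $\lim_{a\to\infty}\lim_{x\to\infty}\P_x(T_a<t)\ge 1-e^{-\lambda t}$ for every $\lambda>0$, and letting $\lambda\to\infty$ gives the entrance property. The rest of your argument stands.
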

We believe that $T_0$ is finite  a.s., under much weaker conditions (including the case $-\psi'(0+)=\infty$) than those stated  in Corollary \ref{corollary1} but in order to deduce such result the knowledge of the underlying process in the Lamperti-type representation is necessary. Under such weaker conditions we can also expect that the process $Z$ must be Feller which can be extended to $[0,\infty]$.

The remainder of this  paper is organised as follows. In Section 2, we deal with  a Lamperti-type representation  which is established for more general competition mechanisms than the logistic case. Such random time change representation  is very useful for the proofs of the subordinator case which are presented in Section 3. Section 4 is devoted to the proof of the results presented for the general case which uses the solution of Ricatti differential equation that appears in Lemma \ref{lemma_hq}. Finally, in Section 5 we discuss the case when the competition mechanism is more general and the process possesses  continuous paths. We call this case  branching diffusions with interactions in a Brownian random environment,  since  the competition mechanism  may take negative and positive values. We study this case separately  since the techniques we use here  are based on the theory of scale functions for diffusions. This allow us to provide a necessary and  sufficient condition  for extinction and moreover, the Laplace transform of  hitting times is computed explicitly in terms of a Ricatti equation. Such results seems complicated to obtain with the presence of jumps coming from the branching mechanism.

 \section{Lamperti-type transform for CB-processes with competition in a Brownian environment.}\label{Lamperti}
 
Let  $g$ be a continuous function on $[0,\infty)$ with $g(0)=0$ and consider the following SDE 
\begin{equation}\label{SDEB}
\begin{split}
Z_t&=Z_0+b\int_0^tZ_s \ud s-\int_0^tg(Z_s)\ud s+\int_0^t\sqrt{2\gamma^2Z_s}\ud B^{(b)}_s+\sigma\int_0^t Z_{s}\ud  B^{(e)}_s\\
&\hspace{3cm}+\int_0^t\int_{[1,\infty)} \int_0^{Z_{s-}}z{N}^{(b)}(\ud s, \ud z, \ud u) +\int_0^t\int_{(0,1)} \int_0^{Z_{s-}}z\widetilde{N}^{(b)}(\ud s, \ud z, \ud u),
\end{split}
\end{equation}
with $\sigma \ge 0$. It is important to note that Proposition 1 in Palau and Pardo \cite{PP} guarantees that the above SDE has a unique strong positive solution up to explosion and by convention here it is identically equal to $+\infty$ after the explosion time.

The main result in this section is the Lamperti-type representation of  a CB-process with competition in a Brownian environment. Such random time change representation will be very useful to study path properties of the logistic case. In order to state the Lamperti-type representation, we introduce the family of processes which are involved in the time change.   

Let $X=(X_t,t\ge 0)$ be a  spectrally positive L\'evy process with characteristics $(-b, \gamma, \mu)$ and such that its   L\'evy measure $\mu$ satisfies (\ref{mugral}). We also consider $W=(W_t, t\ge 0)$ a standard Brownian motion independent of $X$  and assume that  $g$ is a continuous function on $[0,\infty)$ with $g(0)=0$ and such that  $\lim_{x\to 0}x^{-1}g(x)$ exists. According to Proposition 1 in Palau and Pardo \cite{PP} for each $x>0$, there is a unique strong solution to
\begin{equation}\label{SDE1}
\ud R_t=\mathbf{1}_{\{R_{r-}>0:r\le t\}}\ud X_t-\mathbf{1}_{\{R_{r-}>0:r\le t\}}\frac{g(R_t)}{R_t}\ud  t+\mathbf{1}_{\{R_{r-}>0:r\le t\}}\sigma\sqrt{ R_t}\ud W_t ,
\end{equation}
with $R_0=x$. The assumption that  $\lim_{x\to 0}x^{-1}g(x)$  exists, is not necessary but it implies that we can use directly Proposition 1 of Palau and Pardo \cite{PP}.  We can relax this assumption but further explanations are needed. Indeed  a similar approach to Theorems 2.1 and 2.3 in Ma \cite{Ma2015} will guarantee that the SDE defined above for a more general competition mechanism $g$ has a unique strong solution.

It is important to note that in  the logistic-case i.e. $g(x)=cx^2$, for $x\ge0$ and some constant $c>0$, the process $R$ is  a Feller diffusion which is perturbed by the L\'evy process $X$. Moreover if the L\'evy process $X$ is a subordinator, then the process $R$ turns out to be  a CB-process with immigration. 

We now state the Lamperti-type representation of CB-processes with competition in a Brownian environment. 
\begin{theorem}\label{lamperti}
 Let $R=(R_t,t\ge 0)$ be the unique strong solution of (\ref{SDE1}) and  $T^R_0=\sup\{s: R_s=0\}$. We also let $C$ be  the right-continuous inverse of $\eta$, where
\[
\eta_t=\int_0^{t\land T^R_0}\frac{\ud s}{R_s}, \qquad t>0,
\]
that is, $C_t:=\inf\{s\geq 0, \eta_s > t \}$, for any $t\in [0,+\infty)$.
Hence  the process defined by 
\[
Z_t=\left\{ \begin{array}{ll}
R_{C_t}, & \textrm{ if } 0\le t<\eta_\infty\\
0, & \textrm{ if } \eta_\infty<\infty, T^R_0<\infty \textrm{ and } t\ge \eta_\infty,\\
+\infty, & \textrm{ if } \eta_\infty<\infty, T^R_0=\infty \textrm{ and } t\ge \eta_\infty,
\end{array}
\right .
\]
satisfies the SDE (\ref{SDEB}).

Reciprocally, let $Z$ be the unique strong solution to (\ref{SDEB}) with $Z_0=x$ and let 
\[
C_t=\int_0^{t} Z_s\ud s, \qquad t>0.
\]
If $\eta$ denotes the right-continuous inverse of $C$, then the process defined by 
\[
R_t=Z_{\eta_t\land T_0} \qquad\textrm{for}\quad t\ge0.
\]
satisfies the SDE (\ref{SDE1}).
\end{theorem}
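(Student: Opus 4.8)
The plan is to prove the Lamperti-type correspondence in both directions by carefully tracking how the SDE coefficients transform under the time change, using the semimartingale calculus for time-changed processes. I would begin with the forward direction: starting from $R$ solving \eqref{SDE1}, set $Z_t = R_{C_t}$ on $[0,\eta_\infty)$ where $C$ is the right-continuous inverse of $\eta_t = \int_0^{t\wedge T^R_0} R_s^{-1}\,\ud s$. The first key step is the elementary but essential observation that $\ud C_t = R_{C_t}\,\ud t = Z_t\,\ud t$ on $[0,\eta_\infty)$, which one gets by differentiating the identity $\eta_{C_t}=t$ (valid up to the explosion/absorption issues, which must be handled by stopping at $T^R_0$ and the exit time from any compact subset of $(0,\infty)$). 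The bulk of the work is then to substitute $s \mapsto C_s$ into each of the four stochastic integral terms of \eqref{SDE1} and verify that the time-changed martingales/integrators are the ones appearing in \eqref{SDEB}.

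For the drift and competition terms this is just the change of variables $\int_0^{C_t}\frac{g(R_u)}{R_u}\,\ud u = \int_0^t \frac{g(R_{C_s})}{R_{C_s}}\,R_{C_s}\,\ud s = \int_0^t g(Z_s)\,\ud s$, and similarly $\int_0^{C_t}\ud X_u$ has linear-drift part $-b\int_0^{C_t}\ud u = -b\int_0^t Z_s\,\ud s$... wait, sign: $X$ has characteristics $(-b,\gamma,\mu)$, so $\int_0^{C_t}\ud X_u$ contributes $+b\int_0^t Z_s\,\ud s$ plus the martingale/jump pieces. The substantive points are: (i) the Gaussian martingale part of $X$, namely $\sqrt{2\gamma^2}\,\widehat B_{C_t}$ for a Brownian motion $\widehat B$, becomes (by the Dambis–Dubins–Schwarz theorem, since its quadratic variation is $2\gamma^2 C_t = 2\gamma^2\int_0^t Z_s\,\ud s$) a Brownian integral $\int_0^t \sqrt{2\gamma^2 Z_s}\,\ud B^{(b)}_s$ against a new Brownian motion $B^{(b)}$; (ii) the term $\sigma\int_0^{C_t}\sqrt{R_u}\,\ud W_u$ has quadratic variation $\sigma^2\int_0^{C_t} R_u\,\ud u = \sigma^2\int_0^t Z_s^2\,\ud s$, so by the same argument it becomes $\sigma\int_0^t Z_s\,\ud B^{(e)}_s$; (iii) the Poisson part of $X$, with compensator $\ud u\,\mu(\ud z)$, when time-changed becomes a random measure whose compensator is $\ud C_s\,\mu(\ud z) = Z_s\,\ud s\,\mu(\ud z)$, which one recognizes (via the thinning/ $\int_0^{Z_{s-}}\ud u$ representation) as the Poisson measure $N^{(b)}$ with intensity $\ud s\,\mu(\ud z)\,\ud u$ restricted to $u<Z_{s-}$. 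One must check that these three new objects are mutually independent — this follows because the original $X$-martingale pieces, $W$, and the original Poisson measure are mutually independent and time-changing by the same continuous clock $C$ (adapted) preserves this (e.g. via a joint martingale-problem / characteristic-function argument). Finally the boundary cases in the definition of $Z$ ($\eta_\infty<\infty$ with $T^R_0<\infty$ versus $=\infty$) are matched to absorption at $0$ and explosion of $Z$ respectively, using that $\eta_\infty = \int_0^{T^R_0} R_s^{-1}\ud s$ and the convention that $Z\equiv+\infty$ after explosion.

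For the reverse direction the argument is symmetric: given $Z$ solving \eqref{SDEB} with $C_t = \int_0^t Z_s\,\ud s$ and $\eta$ its right-continuous inverse, set $R_t = Z_{\eta_t\wedge T_0}$; then $\ud\eta_t = Z_{\eta_t}^{-1}\,\ud t = R_t^{-1}\,\ud t$, and substituting $s\mapsto\eta_s$ into \eqref{SDEB} and performing the same change-of-variables on each term recovers \eqref{SDE1}, with the indicators $\mathbf 1_{\{R_{r-}>0:\,r\le t\}}$ arising naturally because once $Z$ hits $0$ (equivalently $R$ hits $0$) the clock $C$ stops strictly increasing so the time change is frozen. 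Throughout, uniqueness of the strong solutions to both SDEs (Proposition 1 in Palau and Pardo \cite{PP}) guarantees that the two constructions are inverse to each other and that there is no ambiguity in the objects produced.

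The main obstacle I anticipate is the rigorous bookkeeping around the localization: the clock $\eta$ is only a well-behaved time change on the random interval $[0,T^R_0)$ (or $[0,\eta_\infty)$) and $R$ may leave every compact subset of $(0,\infty)$ either by exploding or by approaching $0$, so one must introduce a sequence of stopping times $\tau_n = \inf\{t: R_t\notin(1/n,n)\}$, run the change-of-variables argument on each $[0,\tau_n]$ where all integrands are bounded and the time change is bi-Lipschitz, and then pass to the limit $n\to\infty$, checking that the limiting behaviour at $0$ and at $\infty$ is consistent with the stated piecewise definition of $Z$ (and in particular that $\eta_\infty<\infty,\ T^R_0=\infty$ really does correspond to explosion of $Z$ rather than some other pathology). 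The second delicate point is the simultaneous time-change of the continuous martingale parts and the jump measure with a proof that independence is preserved; this is cleanest via Watanabe's characterization of Poisson random measures together with Lévy's characterization of Brownian motion applied to the time-changed filtration $(\mathcal F_{C_t})_{t\ge0}$, so I would set that up explicitly rather than appealing to it informally.
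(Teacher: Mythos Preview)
Your proposal is correct and follows essentially the same route as the paper's proof. The paper writes out the time-changed integrals exactly as you do, computes the quadratic variations $2\gamma^2\int_0^t Z_s\,\ud s$ and $\sigma^2\int_0^t Z_s^2\,\ud s$ and the compensator $Z_{s-}\,\ud s\,\mu(\ud z)$ just as you describe, and then invokes Theorems~7.1 and~7.4 of Ikeda--Watanabe~\cite{IW89} as a single black box to obtain the two independent Brownian motions and the Poisson random measure on an extension of the probability space---this packages together your Dambis--Dubins--Schwarz step, the Poisson-measure representation, and the independence argument in one citation, so you need not set up L\'evy's and Watanabe's characterizations separately. The paper does not introduce the localizing sequence $\tau_n$ you anticipate; it works directly on $[0,C_t\wedge T_0^R)$ (respectively $[0,\eta_t\wedge T_0)$) and relies on the fact that the integrands are automatically controlled there, so your extra care on this point is harmless but not strictly necessary for the level of detail the paper provides.
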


\begin{proof}[Proof of Theorem~\ref{lamperti}] Since $X$ is a spectrally positive L\'evy process  and $R_{t-}=0$ implies $R_t=0$, we get $R_{t-}>0$ if and only if $t\in [0, T_0^R).$  We also observe that $X$ can be written as follows
\[
X_t=bt+\sqrt{2}\gamma B_t+\int_0^t\int_{(0,1)} z\widetilde{M}(\ud s, \ud z) +\int_0^t\int_{[1,\infty)} z M (\ud s, \ud z),
\]
where $B$ is a standard Brownian motion and  $M$ is a Poisson random measure with intensity $\ud s \mu(\ud z)$ and $\widetilde{M}$ denotes its compensated version. Then from the latter identity and  (\ref{SDE1}), we have
\[
\begin{split}
Z_t&=x+b\int_0^{C_t\land T_0^R} \ud s-\int_0^{C_t\land T_0^R} \frac{g(R_s)}{R_s}\ud s+\sqrt{2}\gamma\int_0^{C_t\land T_0^R} \ud {B}_s +\int_0^{C_t\land T_0^R}\sigma\sqrt{ R_s}\ud W_s\\
&\hspace{2cm}+\int_0^{C_t\land T_0^R}\int_{(0,1)} z\mathbf{1}_{\{R_{s-}>0\}}\widetilde{M}(\ud s, \ud z) +\int_0^{C_t\land T_0^R}\int_{[1,\infty)} z\mathbf{1}_{\{R_{s-}>0\}}M(\ud s, \ud z), \quad t\ge 0. 
\end{split}
\]
On the one hand, by straightforward computations we deduce
\[
C_t\land T_0^R=\int_0^t Z_s \ud s, 
\]
implying that 
\[
\int_0^{C_t\land T_0^R} \frac{g(R_s)}{R_s}\ud s=\int_0^t g(Z_s)\ud s, 
\]
and 
\[
L^{(1)}_t=\sqrt{2}\gamma\int_0^{C_t\land T_0^R} \ud {B}_s  \qquad \textrm{and} \qquad L_t^{(2)}=\sigma\int_0^{C_t\land T_0^R}\sqrt{ R_s}\ud W_s,
\]
are independent continuous local martingales with  increasing processes
\[
\langle L^{(1)}\rangle_{t}=2\gamma^2\int_0^t Z_s\ud s\qquad \textrm{and}\qquad \langle L^{(2)}\rangle_{t}=\sigma^2\int_0^t Z^2_s\ud s.
\]
On the other hand, we define the random measure $N(\ud s, \ud z)$ on $(0,\infty)^2$ as follows
\[
N((0,t]\times \Lambda)=\int_0^{C_t\land T_0^R}\int_{(0,\infty)} \mathbf{1}_\Lambda(z)\mathbf{1}_{\{R_{s-}>0\}}M(\ud s, \ud z).
\]
Then $N(\ud s, \ud z)$ has predictable compensator
\[
Z_{s-}\ud s\mu(\ud z).
\]
By Theorems 7.1 and 7.4 in Ikeda and Watanabe \cite{IW89}, on an extension of the original probability space there exist two independent Brownian motions, $B^{(1)}$ and $B^{(2)}$, and a Poisson random measure $N(\ud s, \ud u, \ud z)$ on $(0,\infty)^3$ with intensity $\ud s\mu(\ud z)\ud u$ such that for any $t\ge 0$,
\[
\int_0^{C_t\land T_0^R}\int_{[1,\infty)} z\mathbf{1}_{\{R_{s-}>0\}}M(\ud s, \ud z)=\int_0^t \int_{[1,\infty)} \int_0^{Z_{s-}}zN(\ud s, \ud z, \ud u),
\]
\[
\int_0^{C_t\land T_0^R}\int_{(0,1)} z\mathbf{1}_{\{R_{s-}>0\}}\widetilde{M}(\ud s, \ud z)=\int_0^t\int_{(0,1)}\int_0^{Z_{s-}}  z\widetilde{N}(\ud s,\ud z, \ud u),
\]
\[
L^{(1)}_t=\int_0^t \sqrt{2\gamma^2 Z_s}\ud B^{(1)}_s\qquad\textrm{and} \qquad L^{(2)}_t=\sigma \int_0^t Z_s \ud B^{(2)}_s.
\]
Putting all pieces together, we deduce that $(Z_t, t\ge 0)$ is a solution of (\ref{SDEB}) up to explosion.

For the reciprocal, we first observe that since $Z$ has no negative jumps and $Z_{t-}=0$ implies $Z_t=0$, we get $Z_{t-}>0$ if and only if $Z_t>0$ for $t\in [0, T_0)$. Thus $R_{t-}>0$ if and only if $R_t>0$ for $t\in [0,C_{T_0})$, then for any $t\in [0, C_{T_0})$, the equation (\ref{SDE1}) is equivalent to 
\begin{equation}\label{SDE2}
R_t=\ud X_t-\frac{g(R_t)}{R_t}\ud  t+\sigma\sqrt{ R_t}\ud W_t. 
\end{equation}
Since the process $Z$ satisfies the SDE (\ref{SDEB}) and $R_t=Z_{\eta_t \land T_0}$, we have
\begin{equation}\label{SDE3}
\begin{split}
R_t=&Z_0+b\int_0^{\eta_t\land T_0} Z_s\ud s +\int_0^{\eta_t\land T_0}\sqrt{2\gamma^2Z_s}\ud B_s+\sigma\int_0^{\eta_t\land T_0}Z_{s}\ud B^{(e)}_s\\
&+\int_0^{\eta_t\land T_0}\int_{[1,\infty)}\int_0^{Z_{s-}} z{N}(\ud s, \ud z, \ud u) +\int_0^{\eta_t\land T_0}\int_{(0,1)}\int_0^{Z_{s-}} z\widetilde{N}(\ud s, \ud z, \ud u)-\int_0^{\eta_t\land T_0}g(Z_s)\ud s.
\end{split}
\end{equation}
On the one hand, by straightforward computations we deduce
\[
\int_0^{\eta_t\land T_0} Z_s \ud s=t\land C_{T_0}, \qquad \textrm{and} \qquad \int_0^{\eta_t\land T_0} g(Z_s) \ud s=\int_0^{t\land C_{T_0}} \frac{g(R_s)}{R_s} \ud s.
\]
The latter identities imply 
\[
M^{(1)}_t=\int_0^{\eta_t\land T_0}\sqrt{2\gamma^2Z_s}\ud B_s  \qquad \textrm{and} \qquad M_t^{(2)}=\sigma\int_0^{\eta_t\land T_0}Z_{s}\ud B^{(e)}_s,
\]
are independent continuous local martingales with  increasing processes
\[
\langle M^{(1)}\rangle_{t}=2\gamma^2\int_0^{\eta_t\land T_0}Z_s\ud s=2\gamma^2 (t\land C_{T_0}) \qquad \textrm{and}\qquad \langle M^{(2)}\rangle_{t}=\sigma^2\int_0^{\eta_t\land T_0} Z^2_s\ud s=\sigma^2\int_0^{t\land C_{T_0}} R_{s}\ud s.
\]
By Theorems 7.1 and 7.4 in Ikeda and Watanabe \cite{IW89}, on an extension of the original probability space there exist two independent Brownian motions, $B^{(1)}$ and $B^{(2)}$, such that for any $t\ge 0$,
\begin{equation}\label{MBs}
M^{(1)}_t= B^{(1)}_{t\land C_{T_0}}\qquad\textrm{and} \qquad M^{(2)}_t=\sigma \int_0^{t\land C_{T_0}} \sqrt{R_{s}}\ud B^{(2)}_s.
\end{equation}
On the other hand, we define the random measure $M(\ud s, \ud z)$ on $(0,\infty)^2$ as follows
 \begin{equation}\label{MP}
M((0,t]\times \Lambda)=\int_0^{\eta_t\land T_0}\int_{(0,\infty)}\int_0^{Z_{s-}} \mathbf{1}_{\Lambda}(z){N}(\ud s, \ud z, \ud u)+ \int_{C_{ T_0}}^{t}\int_{(0,\infty)}\int_{0}^{1} \mathbf{1}_{\Lambda}(z)\mathbf{1}_{\{t>C_{T_0}\}}{N}(\ud s, \ud z, \ud u).
\end{equation}
Then $M(\ud s, \ud z)$ has predictable compensator
$\ud s\mu(\ud z).$ Hence, $M(\ud s, \ud z)$ is a Poisson random measure on $(0,\infty)^2$ with intensity $\ud s\mu(\ud z)$.
Putting all the pieces together, we deduce that  (\ref{SDE2}) holds for $t\in[0,C_{T_0})$. Recall that $Z_{T_{0}-}=Z_{T_{0}}=0$. Then on $\{C_{T_0}<\infty\}$ by using  (\ref{SDE3})-(\ref{MBs}), we deduce that the right hand side of (\ref{SDE2}) is equal to 0 for $t=C_{T_0}$ and then for all $t\ge C_{T_0}$. 
\end{proof} 

\section{Proofs of the subordinator case}

In this part, we provide the proofs of  Theorems \ref{teo1} and ~\ref{prop_casesubordinator}.  Their proof relies on  the Lamperti-type representation in the  discussed in the previous section. Unfortunatelly, the same techniques cannot be used in the general case since a deep understanding of the process $R$ is required such as its marginal laws and path properties as recurrence and transience which seems  not so clear  to deduce.

In the particular case when the spectrally positive L\'evy process $X$ is a subordinator in the Lamperti-type representation in Theorem \ref{lamperti}, the process $R$   turns out to be  a Feller diffusion with immigration. In other words, it is the unique positive strong solution of the following SDE up to the first hitting time of $0$:
\begin{equation}\label{CBI}
R_t= R_0+X_t-c \int_0^tR_s\ud s+\int_0^t\sqrt{\sigma^2 R_s}\ud W_s.
\end{equation}
The branching mechanism $\omega$ and the immigration mechanism $\phi$ associated to the process $R$, are given by
\[
\omega(z)=cz+\frac{\sigma^2 z^2}{2} \quad\textrm{ and }\quad \phi(z)=-\psi(z)=\delta z+\int_{(0,\infty)}(1-e^{-z u})\mu(\ud u),
\]
respectively and where 
\[
\int_{(0,\infty)} (1\land u)\mu (\ud u)<\infty \qquad \textrm{and} \qquad
\delta=b-\int_{(0,1)} u\mu(\ud u)\ge 0.
\]
We denote by $\mathbb{Q}_x$, for the law of the Feller diffusion with immigration  $R$ starting from $x>0$.

This type of processes have been studied recently by many authors, see for instance the papers of  Keller-Ressel and Mijatovic \cite{KRM} and Duhalde et al. \cite{DFM} and the references therein. In \cite{KRM}, the authors were interested in the invariant  distribution associated to the process $R$ and Duhalde et al. \cite{DFM} studied  first passage times problems   and provide necessary and sufficient conditions for polarity and recurrence.  

\begin{lemma}\label{polarity} Let  $R=(R_t; t\ge 0)$ be the Feller diffusion with immigration described by  \eqref{CBI} with branching and immigration mechanisms given by $\omega$ and $\phi$, respectively. The point 0 is polar, i.e. $T^R_0=\infty$ almost surely,  if and only if  $2\delta\ge \sigma^2$.  
\end{lemma}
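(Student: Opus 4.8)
The plan is to recognise $R$ as a continuous-state branching process with immigration (CBI) and to read polarity of $0$ off the hitting-time criterion of Duhalde, Fontbona and M\'el\'eard \cite{DFM}. Concretely, $R$ is the CBI with branching mechanism $\omega(z)=cz+\tfrac{\sigma^2}{2}z^2$ and immigration mechanism $\phi=-\psi$, that is $\phi(z)=\delta z+\int_{(0,\infty)}(1-e^{-zu})\mu(\ud u)$. Since $\sigma>0$, $\omega$ has quadratic growth, so Grey's condition $\int^\infty\ud z/\omega(z)<\infty$ holds and the underlying CB-process comes down from infinity, which is the standing hypothesis under which \cite{DFM} operates. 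By their polarity criterion, the point $0$ is polar for $R$ (i.e. $T_0^R=\infty$ a.s.) if and only if
\[
\mathcal{K}:=\int^\infty\frac{\ud\lambda}{\omega(\lambda)}\exp\left\{\int_1^\lambda\frac{\phi(u)}{\omega(u)}\ud u\right\}=\infty,
\]
so the whole matter reduces to proving $\mathcal{K}=\infty\iff 2\delta\ge\sigma^2$.

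For this I would first record the behaviour of $\phi$ at infinity: from $1-e^{-\lambda u}\le\lambda(\lambda^{-1}\wedge u)$ together with $\int_{(0,\infty)}(1\wedge u)\mu(\ud u)<\infty$, dominated convergence gives $\lambda^{-1}\int_{(0,\infty)}(1-e^{-\lambda u})\mu(\ud u)\to 0$, hence $\phi(\lambda)/\lambda\to\delta$ as $\lambda\to\infty$. Writing $\phi(u)/\omega(u)=(\phi(u)/u)/(c+\tfrac{\sigma^2}{2}u)$, this yields, for every $\varepsilon>0$ and all large $u$,
\[
\frac{\delta u}{\omega(u)}\le\frac{\phi(u)}{\omega(u)}\le\frac{\delta u}{\omega(u)}+\frac{\varepsilon}{u}.
\]
Since $\int_1^\lambda\tfrac{\delta u}{\omega(u)}\ud u=\tfrac{2\delta}{\sigma^2}\ln\lambda+O(1)$ and $\omega(\lambda)\asymp\lambda^2$ at infinity, the integrand of $\mathcal{K}$ is squeezed, for $\lambda$ large, between constant multiples of $\lambda^{2\delta/\sigma^2-2}$ and $\lambda^{2\delta/\sigma^2-2+\varepsilon}$. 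Consequently, if $2\delta\ge\sigma^2$ the integrand dominates a multiple of $\lambda^{-1}$ and $\mathcal{K}=\infty$; if $2\delta<\sigma^2$, choosing $\varepsilon$ with $2\delta/\sigma^2-2+\varepsilon<-1$ gives $\mathcal{K}<\infty$. As an independent check of the implication $2\delta\ge\sigma^2\Rightarrow 0$ polar, one may argue by pathwise comparison: writing $X_t=\delta t+J_t$ with $J$ a driftless subordinator, $R$ solves $\ud R_t=(\delta-cR_t)\ud t+\sigma\sqrt{R_t}\ud W_t+\ud J_t$, so a standard one-dimensional comparison argument (the increments of $J$ only push $R$ upward) gives $R_t\ge\widetilde R_t\ge 0$ for $t<T_0^R$, where $\widetilde R$ is the CIR diffusion $\ud\widetilde R_t=(\delta-c\widetilde R_t)\ud t+\sigma\sqrt{\widetilde R_t}\ud W_t$ driven by the same $W$ and started at $R_0$; Feller's test makes $0$ inaccessible for $\widetilde R$ when $2\delta\ge\sigma^2$, whence $T_0^R=\infty$ a.s.

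The step I expect to be the genuine obstacle is the direction $2\delta<\sigma^2\Rightarrow 0$ not polar: here the pathwise comparison is of no help, since the upward jumps (and drift) of the immigration subordinator push $R$ away from $0$ and cannot be dominated from above by a diffusion, and this is precisely the information one must import from \cite{DFM}. Heuristically the criterion encodes that near $0$ the process behaves like the diffusion with generator $\tfrac{\sigma^2}{2}x\,f''+\delta f'$, whose scale density $x\mapsto x^{-2\delta/\sigma^2}$ is integrable at $0$ exactly when $2\delta<\sigma^2$, the immigration jumps being summable at $0$ (thanks to $\int_{(0,\infty)}(1\wedge u)\mu(\ud u)<\infty$) and therefore not altering this Feller-type classification. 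Within the computation above, the analogous delicate point is the borderline case $2\delta=\sigma^2$, which is resolved by the lower bound on the integrand of $\mathcal{K}$.
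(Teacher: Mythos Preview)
Your proof is correct and follows essentially the same route as the paper: invoke the polarity criterion of \cite{DFM} (Theorem~2 there), then show that the integral $\int^\infty\frac{\ud\lambda}{\omega(\lambda)}\exp\{\int_1^\lambda\frac{\phi(u)}{\omega(u)}\ud u\}$ diverges precisely when $2\delta\ge\sigma^2$ by sandwiching the integrand between constant multiples of $\lambda^{2\delta/\sigma^2-2}$ and $\lambda^{2\delta/\sigma^2-2+\varepsilon}$. The paper obtains these bounds by splitting the jump integral $\int_0^\infty(1-e^{-zu})\mu(\ud u)$ at a small cut-off $x_0$, whereas you package the same estimate via the asymptotic $\phi(\lambda)/\lambda\to\delta$; these are equivalent manipulations. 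Your additional pathwise comparison with the CIR diffusion is a pleasant independent check of the direction $2\delta\ge\sigma^2\Rightarrow 0$ polar, but it is not used in the paper and is not needed once the \cite{DFM} criterion is in hand. One minor correction: the authors of \cite{DFM} are Duhalde, Foucart and Ma, not Duhalde, Fontbona and M\'el\'eard.
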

\begin{proof}
 According to Theorem 2 in Duhalde et al.~\cite{DFM}, the point $0$ is polar for the Feller diffusion with immigration $R$, accordingly as 
\begin{equation}\label{cond_polarduhalde}
 \int_{1}^\infty \frac{\ud \lambda}{\omega(\lambda)}\exp\left\{\int_1^\lambda \frac{\phi(z)}{\omega(z)}\ud z\right\}=\infty.
\end{equation}
Let $K:=2c/\sigma^2$,  which is equal to 0 if $c=0$. Then for any $\lambda > 1$ and $x_0>0$, we have
\begin{equation}\label{eq_m1}
\int_{1}^{\lambda}\frac{\phi(z)}{\omega(z)}\ud z=\frac{2}{\sigma^2} \int_1^\lambda \left( \frac{\delta z}{K z+z^2} +\frac{1}{K z+z^2}\int_0^{\infty}(1-e^{-zu})\mu(\ud u)\right)\ud z.
\end{equation}
Since all terms in~\eqref{eq_m1} are positive, we can separate the above integral into two terms and study each of them independently. Then
\[
\begin{split}
\int_{1}^{\lambda}\frac{\phi(z)}{\omega(z)}\ud z&= \frac{2\delta}{\sigma^2}\ln\left(\frac{K+\lambda}{K+1}\right)+\frac{2}{\sigma^2}\int_0^{\infty}\mu(\ud u)\int^{\lambda}_1\frac{1-e^{-zu}}{Kz+z^2}\ud z\\
&\leq   \frac{2\delta}{\sigma^2}\ln\left(\frac{K+\lambda}{K+1}\right)+\frac{2}{\sigma^2}\int_0^{x_0}\mu(\ud u)\int^{\lambda}_1\frac{zu}{Kz+z^2}\ud z +\frac{2}{\sigma^2}\int_{x_0}^{\infty}\mu(\ud u)\int^{\lambda}_1\frac{1}{z^2}\ud z\\
&\leq   \frac{2\delta}{\sigma^2}\ln\left(\frac{K+\lambda}{K+1}\right)+\frac{2}{\sigma^2}\left(\int_0^{x_0}u\mu(\ud u)\right)\ln\left(\frac{K+\lambda}{K+1}\right) +\frac{2}{\sigma^2}\bar\mu(x_0),
\end{split}
\] 
where we used Fubini-Tonelli's theorem to obtain the first equality.
The above inequality holds for any $x_0>0$, hence for any $\varepsilon>0$, we can choose $x_0>0$ such that 
\[
\int_0^{x_0}u\mu(\ud u)\leq \frac{\sigma^2}{2}\varepsilon.
\] 
Then for any $\lambda> 1$, the following inequalities hold
\[
K_1(x_0)\frac{\left({K+\lambda}\right)^{\frac{2\delta}{\sigma^2}}}{\lambda^2}\leq \frac{1}{\omega(\lambda)}\exp\left\{\int_1^\lambda \frac{\phi(z)}{\omega(z)}\ud z\right\} \leq K_2(x_0)\frac{\left({K+\lambda}\right)^{\frac{2\delta}{\sigma^2}+\varepsilon}}{\lambda^2},
\]
where $K_1(x_0)$ and $K_2(x_0)$ are positive constants which are independent from $\lambda$. Therefore we conclude that \eqref{cond_polarduhalde} holds if and only if $2\delta \ge \sigma^2$.
\end{proof}

\begin{proof}[Proof of Theorem \ref{teo1}] We first treat the case $\sigma^2>2\delta$. From Lemma \ref{polarity}, we observe that $0$ is not polar, meaning that the Feller diffusion with immigration $R$ hits $0$ with positive probability. From Theorem \ref{lamperti}, we then deduce
\[
\mathbb{P}_x\left(\lim_{t\to \infty} Z_t=0\right)\geq \mathbb{Q}_x(T_0^R<\infty)>0, \qquad x>0.
\]
In other words, with positive probability, the process $Z$ does not explode. Moreover,  if $\mathcal{I}=\infty$, Theorem 3 in Duhalde et al. \cite{DFM} implies that the process $R$, the unique strong solution to \eqref{CBI},   is recurrent in the sense of Duhalde et al. \cite{DFM} (i.e. without assuming the polarity of $0$, cf. remark after Definition~\ref{defrectrans}). In other words, since $0$ is not polar, $R$ hits $0$ at finite time a.s. Since we are interested in the  unique strong solution of \eqref{CBI} up to the first hitting time of $0$, the latter probability equals 1, i.e. the process $Z$ converges to $0$ a.s.

Next, we assume $2\delta \ge \sigma^2$. From  Lemma \ref{polarity}, we know that   $T^R_0=\infty$ a.s. and thus $\eta_t= \int_0^t \frac{1}{R_s}\ud s$ for any $t \geq 0$.
If we also assume that $\mathcal{I}=\infty$, then the solution to \eqref{CBI} is recurrent and $0$ is polar. Let us thus prove that the limit $\eta_{\infty}$ of $(\eta_t,t\geq 0)$ is $\infty$ a.s. If we define recursively the sequences of finite stopping times as follows $\tau^+_0=0$, and for any $k\ge 1$,
\[ \tau^-_{k+1}=\inf\{t\geq\tau^+_k, R_s\leq 1 \}\qquad  \text{ and } \qquad \tau^+_{k+1}=\inf\{t\geq\tau^-_{k+1}, R_s\geq 2 \},
 \]
we deduce that, since $\{{\tau^+_k}-{\tau^-_k}, k\ge 1\}$ is an infinite sequence of strictly positive i.i.d random variables,
\begin{equation}\label{eq_limeta}
\eta_{\infty}= \int_0^{\infty} \frac{1}{R_s}\ud s \geq \sum_{k\geq 1}\frac{1}{2} ({\tau^+_k}-{\tau^-_k})=\infty, \qquad \textrm{a.s.}
\end{equation}
 This implies  that $C_t$,  the right inverse of $\eta_t$, is well defined on $(0,\infty)$ and that $Z_t=R_{C_t}$ for any $t\geq 0$. In other words, the process $Z$ is conservative.
 
 If $\mathcal{I}<\infty$, then the process $R$ is transient  according to Theorem 3 in  Duhalde et al. \cite{DFM}. Recall that the Laplace transform of $R_t$ satisfies
 \[
 \mathbb{Q}_x[e^{-\lambda R_t}]=\exp\left\{-x v_{t}(\lambda)-\int_0^t \phi(v_s(\lambda))\ud s\right\}, \qquad \textrm{for} \quad \lambda \ge 0,
 \]
 where $v_t(\lambda)$ is  solution of 
 \begin{equation}\label{vdiffeq}
\frac{\partial}{\partial t} v_t(\lambda)= -\omega(v_t(\lambda)), \qquad \textrm{with}\qquad v_0(\lambda)=\lambda. 
 \end{equation}
From the form of the branching mechanims $\omega$ and the previous identity, we deduce
 \[
 v_t(\lambda)=\frac{\lambda e^{-ct}}{1+\frac{\sigma^2 \lambda }{2c}(1-e^{-ct})}, \qquad \textrm{for}\quad t, \lambda\ge 0.
 \]
  Therefore, by Tonelli's Theorem, identity \eqref{vdiffeq}, the fact that $v_\infty(\lambda)=0$ and using twice the change of variables $y=v_t(\lambda)$, we deduce that for $\theta>0$
 \[
 \begin{split}
 \mathbb{Q}_x\left[\int_0^\infty \frac{1-e^{-\theta R_s}}{R_s}\ud s\right]&= \int_0^\theta \ud \lambda \int_0^\infty \ud s\,\mathbb{Q}_x\left[e^{-\lambda R_s} \right]\\
 &=\int_0^\theta \ud \lambda \int_0^\lambda\frac{\ud u}{\omega (u)} \exp\left\{-x u-\int^\lambda_{u}\frac{\phi(y)}{\omega(y)}\ud y\right\} ,
 \end{split}
 \]
 which is clearly finite from our hypothesis. Since the Feller diffusion with immigration $R$  is transient, it is clear that
 \[
 \lim_{s\to \infty} e^{-\theta R_s}=0, \qquad \mathbb{Q}_x\textrm{-a.s.,}
 \]
implying that 
\[
 \mathbb{Q}_x\left[\int_0^\infty \frac{1}{R_s}\ud s\right]<\infty, 
\]
and implicitly the process $Z$ explodes at finite time a.s. This completes the proof.
\end{proof}

We now proceed with the proofs of Proposition \ref{prop_duhalde}, Lemma~\ref{lemma_mlambda} and Theorem \ref{prop_casesubordinator} where it is assumed that $c>0$.

\begin{proof}[Proof of Proposition \ref{prop_duhalde}]
The proof of this result is a direct consequence of the  Lamperti-type representation (Theorem \ref{lamperti}) and Theorem 1 in Duhalde et al. \cite{DFM}.
\end{proof}

\begin{proof}[Proof of Lemma~\ref{lemma_mlambda}]
We first recall that $m$,  introduced in \eqref{defm},  is well defined under the log-moment condition \eqref{logcondition}. Then, similarly to \eqref{eq_m1}, we have 
\begin{equation}\label{eq_m}
-m(\lambda)=\int_0^\lambda \frac{\phi(z)}{\omega(z)}\ud z =\frac{2}{\sigma^2} \int_0^\lambda  \frac{\delta z }{\frac{2c}{\sigma^2}z+z^2} \ud z +\int_0^\lambda\left(\frac{1}{\frac{2c}{\sigma^2}z+z^2}\int_0^{\infty}(1-e^{-zu})\mu(\ud u)\right)\ud z.
 \end{equation}
For simplicity in exposition,  we study the two last integrals independently. 
For the first integral of \eqref{eq_m}, we observe
 \begin{equation*}
  \begin{aligned}
   \int_0^\lambda  \frac{\delta z}{Kz+z^2} \ud z 
   = \delta  \int_0^{\lambda}  \int_0^{\infty}e^{-v(z+K)}\ud v \ud z
   = \int_0^{\infty} (1-e^{-\lambda v}) \frac{\delta e^{-Kv}}{v} \ud v,
  \end{aligned}
 \end{equation*}
where  $K:=2c/\sigma^2$ and the last equality follows from an application of Fubini-Tonelli's theorem. For the second integral of \eqref{eq_m}, we use again  Fubini-Tonelli's theorem, to deduce
\[
 \int_0^\lambda\frac{1}{Kz+z^2}\left(\int_0^{\infty}(1-e^{-zu})\mu(\ud u)\right) \ud z =\frac{1}{K}  \int_0^{\infty}  \left(\int_0^\lambda \frac{K(1-e^{-zu})}{Kz+z^2}\ud z\right)\mu(\ud u).
\]
Now, we fix $u>0$ and study the integral inside the brackets. Since the map $z\mapsto (1-e^{-zu})/z$ is integrable at $0$, we have
\begin{equation*}
 \begin{aligned}
  \int_0^\lambda \frac{K(1-e^{-zu})}{Kz+z^2}&\ud z= \int_0^\lambda \left(\frac{1-e^{-zu}}{z}-\frac{1-e^{-zu}}{K+z}\right)\ud z \\
  &= \int_0^u \frac{1-e^{-\lambda v}}{v}\ud v -\int_K^{K+\lambda}\frac{1-e^{Ku}e^{-zu}}{z}\ud z\\
  &=\int_0^u \frac{1-e^{-\lambda v}}{v}\ud v -(1-e^{Ku})\int_K^{K+\lambda}\frac{1}{z}\ud z-e^{Ku}\int_K^{K+\lambda}\frac{1-e^{-zu}}{z}\ud z\\
  &= \int_0^u \frac{1-e^{-\lambda v}}{v}\ud v-(1-e^{Ku})\int_K^{K+\lambda}\int_0^{\infty}e^{-zv}\ud v \ud z \\
  &\hspace{4cm}+e^{Ku}\left(\int_0^{K}\frac{1-e^{-zu}}{z}\ud z-\int_0^{K+\lambda}\frac{1-e^{-zu}}{z}\ud z\right)\\
  &=\int_0^u \frac{1-e^{-\lambda v}}{v}\ud v -(1-e^{Ku})\int_0^{\infty}\frac{e^{-Kv}}{v}(1-e^{-\lambda v})\ud v  -e^{Ku}\int_0^{u}\frac{e^{-Kv}}{v}(1-e^{-\lambda v})\ud v\\
  &=\int_0^u \frac{1-e^{-\lambda v}}{v}(1-e^{-K v})\ud v +(e^{Ku}-1) \int_u^{\infty}\frac{1-e^{-\lambda v}}{v} e^{-Kv}\ud v
 \end{aligned}
\end{equation*}
where the second identity follows from the  change of variables $zu=\lambda v$, the third identity is obtained by adding and subtracting  $e^{Ku}$,  the fifth identity follows from Fubini-Tonelli's  Theorem and the  change of variables $Kv=zu$ and $(K+\lambda)v=zu$ and finally, the last identity follows by adding and subtracting
\[
\int_0^u \frac{1-e^{-\lambda v}}{v}e^{-K v}\ud v.
\]
In other words, we get
\[
  \int_0^\lambda \frac{K(1-e^{-zu})}{Kz+z^2}\ud z  =\int_0^{\infty} \frac{1-e^{-\lambda v}}{v}e^{-Kv}(e^{K(v\wedge u)}-1)\ud v=\int_0^{\infty} \frac{1-e^{-\lambda v}}{v}e^{-Kv}\left(\int_0^{v \wedge u}Ke^{Kz}\ud z\right)\ud v.
  \]
Putting all pieces together and using twice Fubini-Tonelli's theorem, we obtain the following expression for the second integral of \eqref{eq_m}
\begin{align}
  \int_0^\lambda \frac{1}{Kz+z^2}\left(\int_0^{\infty}(1-e^{-zu})\mu(\ud u)\right)\ud z &= \int_0^{\infty} \frac{1-e^{-\lambda v}}{v} e^{-Kv}\left( \int_0^{\infty}\left(\int_0^{v \wedge u}e^{Kz}\ud z\right)\mu(\ud u)\right)\ud v \nonumber \\
  &=  \int_0^{\infty} \frac{1-e^{-\lambda v}}{v} e^{-Kv}\left( \int_0^{v }e^{Kz}\bar{\mu}(z)\ud z\right)\ud v.\label{eq_m2ndterm}
  \end{align}
Finally from identity~\eqref{eq_m} and the previous computations, we find~\eqref{eq_lemdecm}.

Next, we define the positive measure $\Pi(\ud z)$ as follows
\begin{equation}\label{def_Pih}
\Pi(\ud z)=\frac{2 e^{-K z}}{\sigma^2 z}\left(\delta+\int_0^z e^{K v}\bar{\mu}(v)\ud v \right)\ud z,
\end{equation}
and  prove that $\int_{(0,\infty)}(1 \wedge z)\Pi(\ud z)$ is finite. To this aim, we observe  that  the following  three  inequalities hold true,
\begin{equation} \label{eq_3facts}
\int^{\infty}\frac{\bar\mu(w)}{w}\ud w<\infty, \qquad \int_0\bar\mu(w)\ud w <\infty \quad \text{and} \quad
\int_u^{\infty}\frac{e^{-Kz}}{z}\ud z \leq \frac{e^{-Ku}}{Ku}.
\end{equation}
Indeed, the finiteness of the first two integral follows from Fubini-Tonelli's theorem,  since
\[
\int_1^{\infty}\frac{\bar\mu(w)}{w}\ud w=\int_1^\infty \ln(z)\mu(z) \ud z\qquad \textrm{and} \qquad \int_0^1\bar\mu(w)\ud w=\int_0^\infty (1\wedge z)\mu(z) \ud z.
\] 
With this in mind, we observe
\[
\int_0^{1} z \Pi(\ud z)\leq \frac{2}{\sigma^2} \left(\delta  +e^K \int_0^{1}\bar{\mu}(v)\ud v \right)<\infty.
\]
Moreover, 
\begin{equation*}
\begin{aligned}
\int_1^{\infty}\Pi(\ud z) &= \frac{2}{\sigma^2}\left(\delta\int_1^{\infty} \frac{e^{-Kz}}{z} \ud z +\int_1^{\infty}\frac{e^{-Kz}}{z} \int_0^z e^{Kv}\bar\mu(v)\ud v\ud z\right)\\
&\leq \frac{2}{\sigma^2}\left( \frac{\delta}{K}e^{-K} +\int_0^{\infty}e^{Kv}\bar\mu(v)\int_{v\vee 1 }^{\infty}\frac{e^{-Kz}}{z}\ud z \ud v \right)\\
&\leq \frac{2}{\sigma^2}\left( \frac{\delta}{K}e^{-K} +\frac{1}{K}\int_0^1\bar\mu(v)\ud v+\frac{1}{K}\int_1^{\infty}\frac{\bar\mu(v)}{v} \ud v \right)<\infty.
\end{aligned}
\end{equation*}
In other words, the probability measure $\nu$ is infinitely divisible with support on $(0,\infty)$ and with Laplace exponent  $-m$. Finally, if $\bar\mu(0)\leq b$, a simple computation guarantees that $k$ defined by
\[
k(z)=\frac{2 e^{-K z}}{\sigma^2 }\left(\delta+\int_0^z e^{K v}\bar{\mu}(v)\ud v \right),
\]
 is non-increasing and Theorem 15.10 in Sato~\cite{Sa} implies the self-decomposability of $\nu$.
\end{proof}

\begin{proof}[Proof of Theorem \ref{prop_casesubordinator}] 
Recall from  Theorem 3 in \cite{DFM}  that the solution to \eqref{CBI} is recurrent if and only if $\mathcal{I}=\infty$.
From the definition of functions $\phi$ and $\omega$ and the fact that $2\delta\geq \sigma^2$ and $c>0$, we deduce that $\mathcal{I}=\infty$ if and only if \eqref{cond_rec} is satisfied. 

In other words, under the assumption that  $2\delta \geq \sigma^2$ and \eqref{cond_rec} hold, we have that   $0$ is polar and that $R$ is recurrent. From  the proof of  \eqref{eq_limeta}, we deduce
 that $C_t$,  the right inverse of $\eta_t$, is well defined on $(0,\infty)$ and that $Z_t=R_{C_t}$ for any $t\geq 0$. That is to say $Z$ is also recurrent, $T_0=\infty$ a.s. and  has an invariant measure that we denote  by $\rho$.

Next, we characterise the  invariant measure  $\rho$ below. In order to do so, we use  the infinitesimal generator $\U$ of $Z$, i.e. $\rho$ is an invariant measure for $Z$ if and only if 
\[
\int_0^\infty \U f(z) \rho(\ud z)=0,
\]
for any $f$ in the domain of $\U$.  According to  Palau and Pardo \cite{PP}, the infinitesimal generator $\U$ satisfies  for any $f \in C^2_b(\R_+)$,
 \[
 \U f(x)=x \A f(x) -cx^2f'(x)+\frac{\sigma^2}{2}x^2f''(x),
\]
 where $\A$ represents the generator of the spectrally positive L\'evy process associated to the branching mechanism $\psi$. For the particular choice of $f(x)=e^{-\lambda x}$, for $\lambda>0$, we observe   $\A f(x)=\psi(\lambda)e^{-\lambda x}$ implying that 
\[
0=  \int_0^{\infty} \U f(z) \rho(\ud z) =\int_0^{\infty} \Big(\psi(\lambda)+\omega(\lambda)z\Big)z e^{-\lambda z}\rho(\ud z).
\]
 Then, similarly as  in~\cite{Lambert2005}, we denote the Laplace transform of $z\rho(\ud z)$ by $\chi$ and performing the previous identity, we observe that $\chi$ satisfies the ordinary differential equation $\psi(\lambda) \chi(\lambda)-\omega(\lambda) \chi'(\lambda)=0$ on $(0,\infty)$. Straightforward computations implies that $\chi$ satisfies
 \begin{equation}\label{formulachi}
 \chi(\lambda)=K_0 \exp\left\{\int_\theta^{\lambda}\frac{\psi(u)}{\omega(u)}\ud u\right\},
 \end{equation}
  for some constants $K_0>0$ and $\theta\geq 0$. We can now prove the cases (a) and (b).

 Let us assume that \eqref{logcondition} is satisfied or   equivalently  the integrability of $\psi/\omega$ at $0$. We take  $\theta=0$ in~\eqref{formulachi} and deduce that  $\chi(\lambda)=K_0 \exp(m(\lambda))$ for some constant $K_0>0$. In other words, we have for $z\ge 0$
 \[
  \rho(\ud z)=K_0\frac{1}{z} \nu(\ud z),
 \]
with a possible Dirac mass at $0$, and where $\nu$ is defined in Lemma~\ref{lemma_mlambda}. We can conclude as soon as we prove that $\varrho:=\int_0^{\infty}z^{-1}\nu(\ud z)$ is finite if $2\delta>\sigma^2$ or if $2\delta=\sigma^2$ and condition $(\partial)$ holds and that $\rho$ is infinite if $2\delta=\sigma^2$ and condition $(\eth)$ holds. Indeed, if $\varrho<\infty$, $\rho$ defined by~\eqref{def_rho} is the unique invariant probability measure of $Z$ and consequently it is positive recurrent. If $\varrho=\infty$, then all invariant measures of $Z$ are non-integrable at $0$, so that $Z_t$ converges to $0$ in probability and since $Z$ oscillates in $(0,\infty)$ then it is null-recurrent.

Therefore, it remains to verify whether $\varrho$ is finite or not. Note that formally,
 $$
\int_0^{\infty}e^{m(\lambda)} \ud\lambda= \int_{(0,\infty)} z^{-1}\nu(\ud z)=\varrho .
 $$
 Hence, $\varrho$ is finite if and only if $e^{m(\lambda)}$ is integrable at $\infty$.
 From the proof of Lemma~\ref{lemma_mlambda} (see \eqref{eq_m} and \eqref{eq_m2ndterm}), we deduce
\begin{equation}\label{eq_mh}
 -m(\lambda)=\frac{2\delta}{\sigma^2}\ln\left(1+\frac{\lambda}{K}\right) +\int_0^{+\infty}\frac{(1-e^{-\lambda z})}{z}h(z)\ud z,
 \end{equation}
 where we recall that $K=2c/\sigma^2$, and 
 \[
 h(z)=\frac{2}{\sigma^2}e^{-Kz}\int_0^ze^{Kw}\bar\mu(w)\ud w.
 \] 
 With all this in mind, we study the integral in the right-hand side of \eqref{eq_mh} for $\lambda$ large enough following a similar approach to the proofs of Theorem 53.6 in Sato \cite{Sa} or Theorem 3.4 in Lambert~\cite{Lambert2005}. We take $x>0$ and $\lambda>1$, and split the interval  $(0,\infty)$ into $(0,x/\lambda]$, $(x/\lambda,x]$ and $(x,\infty)$. From \eqref{eq_3facts},we deduce 
\begin{align*}
\int_x^{\infty}\frac{h(z)}{z}\ud z &=\frac{2}{\sigma^2}\int_0^{\infty}e^{Kw}\bar\mu(w) \left(\int_{x\vee w}^{\infty}\frac{e^{-Kz}}{z}\ud z \right)\ud w\\
&\leq \frac{2}{K\sigma^2}\left( \frac{1}{x}\int_0^x \bar\mu(w)\ud w +\int_x^{\infty} \frac{\bar \mu(w)}{w}\ud w  \right)<\infty,
\end{align*}
which guarantees, together with the  Dominated Convergence Theorem, that 
\[
\int_x^{\infty}(1-e^{-\lambda z})\frac{h(z)}{z}\ud z \quad \textrm{converges as}\quad \lambda\to \infty.
\] 
On the other hand, we observe
\begin{align*}
\int_0^{x/\lambda} (1-e^{-\lambda z})\frac{h(z)}{z}\ud z &=\frac{\sigma^2}{2}\int_0^x \frac{(1-e^{-z})}{z}e^{-\frac{K z}{\lambda}}\left(\int_0^{z/\lambda}e^{K w}\bar\mu(w)\ud w\right)\ud z\\
&\leq \frac{\sigma^2}{2} e^{K x} \int_0^x \frac{(1-e^{-z})}{z}\ud z \int_0^{x}\bar\mu(w)\ud w<\infty,
\end{align*}
 which implies  the convergence of 
 \[
 \int_0^{x/\lambda}(1-e^{-\lambda z})\frac{h(z)}{z}\ud z\qquad  \textrm{when} \quad \lambda\to \infty.
 \]
A similar change of variables lead us to deduce 
  \[
   \int_{x/\lambda}^x e^{-\lambda z}{h(z)}/{z}\ud z 
   \]  
   converges when $\lambda$ grows to $\infty$.  Putting the pieces together in~\eqref{eq_mh}, we deduce that for any $x>0$ and for $\lambda$ large enough 
 \[
 -m(\lambda)=\frac{2\delta}{\sigma^2}\ln\left(1+\frac{\lambda}{K}\right)+\int_x^{x/\lambda}\frac{h(z)}{z}\ud z + K_1(x)+{o}(1),
 \]
 where $K_1(x)$ is a non-negative constant. Hence, for $\lambda$ large enough and for any $x>0$, 
 \begin{equation}\label{eq_approxmint}
 e^{m(\lambda)}= \frac{{K_2(x)}}{(1+\lambda)^{2\delta/\sigma^2}} \exp\left\{- \int_x^{x/\lambda}\frac{h(z)}{z}\ud z+{o}(1)\right\},
 \end{equation}
 where $K_2 (x)$ is a positive constant.
 
 It thus remains to study the integral term in \eqref{eq_approxmint}. 
 Since $h$ is positive, we can find $K_3(x)>0$ such that for any $\lambda $ large enough, $
   e^{m(\lambda)} \leq {K_3(x)}{\lambda^{-2\delta/\sigma^2}},
 $
 and we conclude as soon as $2\delta >\sigma^2$. This implies part (a), when \eqref{logcondition} holds.
 
Next, we prove part (b), i.e. we assume that $2\delta=\sigma^2$ and that \eqref{logcondition} holds. For the sake of brevity, we concentrate on the case $(\partial)$,  the case $(\eth)$ uses similar arguments. 
Under condition $(\partial)$, there exists $n\in \N $ such that $\inf ({\tt Adh}(I^{(n)}))>{\sigma^2}/{2}$ and ${\tt Adh}(I^{(k)})=\{{\sigma^2}/{2}\}$, for any $k\in\{1,..,n-1\}$. Let us define by recurrence the collection of functions $\bar I$ such that
 \[
 \bar I^{(1)}(z)=l^{(1)}(z)h(z)\qquad  \text{ and }\qquad \bar I^{(k)}(z)= l^{(k)}(z)\left[\bar I^{(k-1)}(z)-1\right],\quad \; k\in \N,\,\, k\geq 2.
 \]
 Note that the sequences $\{\bar I^{(k)}\}_{k\leq n}$ and $\{I^{(k)}\}_{k\leq n}$ satisfy similar recurrence relations  but start on different values.
 From the definition of $h$ and a recurrence argument, it is straightforward to compute that for any $k\in \N$, and for $z$ small enough,
\[
\frac{2}{\sigma^2} e^{-Kz} I^{(k)}(z) +(e^{-Kz}-1)\sum_{j=2}^k\prod_{i=j}^k l^{(i)}(z) \leq \bar I^{(k)}(z) \leq  \frac{2}{\sigma^2}  I^{(k)}(z).
\] 
Since $(e^{-Kz}-1)$ behaves as $-Kz$,  for $z$ small enough, the second term of the left hand side converges to $0$ when $z$ converges to $0$ and we deduce that the sequences of functions $\{\bar I^{(k)}\}_{k\leq n}$ and $\{I^{(k)}\}_{k\leq n}$ satisfy similar assumptions, which are $\inf({\tt Adh}(\bar I^{(n)}))=A>1$ and ${\tt Adh}(\bar I^{(k)})=\{1\}$, for any $k\in\{1,..,n-1\}$. Let us fix $\varepsilon>0$ such that $A-\varepsilon>1$ and  $x>0$ such that $\bar I^{(n)}(x)\geq A-\varepsilon$. Using the definition of $\{\bar I^{(k)}\}_{k\geq 0}$ and a  recurrence argument, we obtain that for any $z$ sufficiently small,
\[
h(z)=\frac{\bar I^{(n)}(z)}{\prod_{i=1}^n l^{(i)}(z)}+\sum_{j=1}^{n-1} \frac{1}{\prod_{i=1}^j l^{(i)}(z)}.
\]
Hence,
\begin{equation}\label{eq_intxxlambda}
\int_{x/\lambda}^{x}\frac{h(z)}{z}\ud z \geq (A-\varepsilon) \int_{x/\lambda}^x \frac{\ud z}{z \prod_{i=1}^nl^{(i)}(z)} + \sum_{j=1}^{n-1}\int_{x/\lambda}^x\frac{\ud z}{z\prod_{i=1}^{j}l^{(i)}(z)}. 
\end{equation}
Moreover  from the definition of $l^{(j)}$, we have for any $j\in \N$,
\begin{align*}
\int_{x/\lambda}^x\frac{\ud z}{z\prod_{i=1}^{j}l^{(i)}(z)}&=l^{{(j+1)}}(x)-l^{{(j+1)}}\left(\frac{x}{\lambda}\right)\\
&=l^{(j+1)}(x)+l^{(j+1)}(\lambda)-R^{(j+1)}(x,\lambda) \qquad  \textrm{as}\quad \lambda \to \infty,
\end{align*}
where the sequence $\{R^{(k)}\}_{k\geq 2}$ satisfies the following recurrence relation: for any $x$ small enough and $\lambda$ large enough,
\[
 R^{(2)}(x,\lambda)=\ln\left(1+\frac{l^{(1)}(x)}{l^{(1)}(\lambda)} \right)\qquad  \text{ and }\qquad R^{(j)}(z)= \ln\left(1+\frac{R^{(j-1)}(x,\lambda)}{l^{(j-1)}(\lambda)} \right),\quad \; j\in \{3,..,n+1\}.
\]
Hence, we deduce that $R^{(j)}(x,\lambda)$ converges to $0$ when $\lambda$ increases to $\infty$, for all $j\in \{3,..,n+1\}$.
In addition with~\eqref{eq_intxxlambda}, as soon as $\lambda$ is sufficiently large, we have
$$
\int_{x/\lambda}^{x}\frac{h(z)}{z}\ud z \geq (A-\varepsilon) l^{(n+1)}(\lambda) + \sum_{j=1}^{n-1}l^{(j+1)}(\lambda)+ K_4(x), 
$$
where $K_4(x)$ is a finite constant. Hence using~\eqref{eq_approxmint}, we deduce that for $\lambda$ sufficiently large there exist a finite constant $K_5(x)>0$ such that 
\begin{equation}\label{eq_approxmmaj}
e^{m(\lambda)}\leq  \frac{{K_5(x)}}{\lambda \displaystyle \prod_{i=1}^{n-1} l^{(i)}(\lambda)  (l^{(n)}(\lambda))^{A-\varepsilon} }.
\end{equation}
Since $A-\varepsilon>1$, the right hand side of~\eqref{eq_approxmmaj} is integrable at $\infty$. Indeed, for any $z,y$ sufficiently large such that $l^{(n)}(y)>0$ and $l^{(n)}(z)>0$, with the change of variables $u=l^{n}(\lambda)$, we have
$$
\int_z^y  \frac{1}{\lambda \displaystyle \prod_{i=1}^{n-1} l^{(i)}(\lambda)  (l^{(n)}(\lambda))^{A-\varepsilon}  } \ud \lambda= \int_{l^{(n)}(z)}^{l^{(n)}(y)}\frac{1}{u^{A-\varepsilon}}\ud u \underset{b\to \infty}{\longrightarrow}\int_{l^{(n)}(z)}^{\infty}\frac{1}{u^{A-\varepsilon}}\ud u<\infty.
$$
Finally, we have proved that under condition $(\partial)$, \[
\int^{\infty}e^{m(\lambda)}\ud \lambda<\infty.
\] 
 This completes  the proof of part (b) and the cases when  condition~\eqref{logcondition} is satisfied.

Now,  we deal with the case when the log-moment condition \eqref{logcondition} does not hold and $2\delta >\sigma^2$. Under this assumption we show that  $Z$ is still positive recurrent  but its invariant distribution has an infinite expected value. Recall that condition $2\delta >\sigma^2$ guarantees  that $Z$ is recurrent with an invariant distribution $\rho$ satisfying~\eqref{formulachi}. However in this case, $\psi/\omega$ is not integrable at $0$ and we can not take $\theta=0$ in ~\eqref{formulachi}, instead we let $\theta=1$. Formally, the following identity still holds
\[
 \int_0^{\infty}\chi(\lambda)\ud \lambda=\int_0^{\infty}\rho(\ud z).
\]
Our aim is thus to prove that the latter identity is finite but the expected value of $\rho$ is infinite.

On the one hand, recalling that $K=2c/\sigma^2$ and taking $\lambda$  smaller than $1$, we  use the definition of $\psi$ and Fubini-Tonnelli's Theorem to deduce
\begin{align*}
-\int_{\lambda}^1\frac{\psi(z)}{\omega(z)}\ud z&= \frac{2\delta}{\sigma^2}\ln\left(\frac{K+1}{K+\lambda}\right)+\frac{2}{\sigma^2}\int_0^{\infty}\left(\int_{\lambda}^1\frac{1-e^{-zu}}{Kz+z^2}\ud z\right)\mu(\ud u)\\
& \leq \frac{2\delta}{\sigma^2}\ln\left(1+\frac{1}{K}\right)+\frac{2}{\sigma^2}\int_0^{A}\left(\int_{\lambda}^1\frac{zu}{Kz}\ud z\right)\mu(\ud u)+\frac{2}{\sigma^2}\int_A^{\infty}\left(\int_{\lambda}^1\frac{1}{Kz}\ud z\right)\mu(\ud u)\\
& \leq \frac{2\delta}{\sigma^2}\ln\left(1+\frac{1}{K}\right)+\frac{2}{K\sigma^2}\int_0^{A}u\mu(\ud u)-\frac{2}{K\sigma^2}\ln(\lambda)\bar\mu(A),
\end{align*}
for any $A>0$.  Thus,  we take $A>0$ in such a way  that $\bar\mu(A)\leq {K}{\sigma^2}/4$. Implying that  for any $\lambda\leq 1$, we get
\[
\chi(\lambda) \leq K_0 \frac{e^{K(A)}}{\lambda^{1/2}},
\]
with $K_0$ and $K(A)$ two positive constants which are  independent from $\lambda$. In other words, $\chi$ is integrable near $0$. On the other hand, since 
\[
\int_{1}^{\lambda}\frac{\psi(z)}{\omega(z)}\ud z\leq -\frac{2b}{\sigma^2}\ln\left(\frac{K+1}{K+\lambda}\right),
\] we also have
\[
 \chi(\lambda)\leq K_0 \left(\frac{K+1}{K+\lambda}\right)^{\frac{2b}{\sigma^2}},
\]
implying that 
\[
\int_0^{\infty}\chi(\lambda)\ud \lambda<\infty,
\]
since $2b>\sigma^2$. In other words $Z$ has a finite invariant measure and is positive recurrent. Moreover, since the log-moment condition~\eqref{logcondition} does not hold, a straightforward computation gives
\[
\int_0^{\infty}z\rho(\ud z)=\lim_{\lambda\to 0} \int_0^{\infty}e^{-\lambda z} z \rho(\ud z) =\lim_{\lambda\to 0} \chi(\lambda)=\infty.
\] 

Finally, if  condition \eqref{cond_rec}  does not hold then $\mathcal{I}<\infty$ and from Theorem \ref{teo1} the process $Z$ explodes in finite time a.s. 
\end{proof}


\section{General case}
\label{subsec_logisticLaplace}

For the proof of Theorem  \ref{theo_Tatteinte} recall that  the associated L\'evy process $X$ which appears in \eqref{SDE1} is general, that is to say, there exist $\zmin\geq0$ such that $\psi(z)>0$ for any $z\geq \zmin$ and the log-moment condition \eqref{logcondition}
is satisfied. 

\begin{proof}[Proof of Theorem \ref{theo_Tatteinte}]

Let us fix $\lambda>0$, and denote by $\func$ the function
\begin{equation*}
\func(z):= \frac{e^{-m(z)}}{\omega(z)} \exp\left\{ -\int_0^{{\tt I}(z)}y_\lambda(v)\ud v \right\} \int_0^z \exp\left\{ m(u)+2\int_0^{{\tt I}(u)}y_\lambda(v)\ud v \right\}\ud u,
\end{equation*}
in other words, we have 
\[
h_\lambda(x)=1+\lambda \int_0^{\infty}e^{-xz}\func (z) \ud z,
\]
which was defined by \eqref{eq-theo}. For simplicity in exposition, we split the proof in six steps.

{\bf Step 1:} We first prove that $h_\lambda$ is well defined on $(0,\infty)$ or equivalently, we prove that $z\mapsto e^{-xz}\func (z)$ is integrable on $(0,\infty)$ as soon as $x$ is positive. From the definitions of $m$ and  ${\tt I}$ (see \eqref{defm} and \eqref{defI}, respectively), it is straightforward that
\begin{equation}
 \label{eq_conv1}
 \exp\left\{m(u)+2\int_0^{{\tt I}(u)} y_\lambda(v) \ud v \right\}  \to 1, \qquad \textrm{as}\quad u\to 0,
\end{equation}
implying
\begin{equation}\label{eq_conv2}
e^{-xz}\func (z) {\sim} \frac{z}{\omega(z)} \sim \frac{1}{c},  \qquad \textrm{as}\quad z\to 0,
\end{equation}
hence the integrability at $0$.

Concerning the neighbourhood of $\infty$, we see from   Lemma \ref{lemma_hq} that $y_\lambda(z)\leq \sqrt{\lambda} \frac{\varphi'(z)}{\sqrt{\omega(\varphi(z))}}$ which is equivalent to $\sqrt{2\lambda}\frac{\varphi'(z)}{\sigma \varphi(z)}$. In addition with \eqref{eq_derphi}, we deduce
\begin{equation}
\label{eq_convinty}
 \int_0^{{\tt I}(z)}y_\lambda(u) \ud u = {\rm O}\left( \ln (z) \right) \qquad \text{and} \qquad \frac{\psi(z)}{\omega(z)}+ {\tt I}'(z)y_\lambda({\tt I}(z))\geq 0 \qquad \textrm{as} \quad z\to \infty.
\end{equation}
Then, for any $x>0$ and for $u$ sufficiently large, we have
\[
\left|\frac{\exp\left\{m(u)+2\int_0^{{\tt I}(u)} y_\lambda(v) \ud v  \right\} }{\left(\frac{x}{2}+\frac{\psi(u)}{\omega(u)}+{\tt I}'(u)y_\lambda({\tt I}(u))\right)\exp\left\{\frac{xu}{2}+m(u)+\int_0^{{\tt I}(u)} y_\lambda(v) \ud v  \right\} }\right|\leq  \frac{2\exp\left\{-\frac{xu}{2}+\int_0^{{\tt I}(u)} y_\lambda(v) \ud v  \right\} }{x},
\]
which converges to 0 as $u$ goes to $\infty$. In other words, 
\begin{equation}\label{eq_oitoint}
\int_0^z  \exp\left\{m(u)+2\int_0^{{\tt I}(u)} y_\lambda(v) \ud v  \right\} \ud z = {\rm o}\left(\exp\left\{\frac{xz}{2}+m(z)+\int_0^{{\tt I}(z)} y_\lambda(v) \ud v  \right\} \right), \quad \textrm{as}\quad z\to \infty.
\end{equation}
Finally from the definition of $\func $, we obtain
\begin{equation}
\label{eq_oitog}
e^{-zx}\func (z)={\rm o}\left(\frac{1}{\omega(z)} e^{-\frac{xz}{2}} \right), \qquad \textrm{as} \quad z \to \infty,
\end{equation}
implying the integrability of $z\mapsto e^{-zx}\func (z)$ at $\infty$. It is important to note that  \eqref{eq_conv2} and \eqref{eq_oitog}, also imply that  the mappings $z\mapsto ze^{-xz}\func (z)$ and $z\mapsto z^2e^{-zx}\func (z)$ are integrable on $(0,\infty)$ and that $h_\lambda$ is a $C^2$-function on $(0,\infty)$.

{\bf Step 2:} Now, we prove \eqref{eq_vpgenerator}. The infinitesimal generator of $Z$ satisfies \eqref{infgenerator}, i.e.  for any $f \in C^2_b(\R_+)$
 \begin{equation}\label{eq_generator}
  \U f(x)=x \A f(x) -cx^2f'(x)+\frac{\sigma^2}{2}x^2f''(x)
  \end{equation}
where  $\A$ is the generator of the spectrally positive L\'evy process associated to branching mechanism $\psi$. 
Since, for $f(x)=e^{-z x}$,  $\A f(x)=\psi(z)e^{-zx}$ with $z\ge 0$, we deduce using integrations by parts (twice) that
\begin{equation}
\label{eq_calcul}
\begin{aligned}
\U h_\lambda(x)-\lambda h_\lambda(x)&= \lambda\int_0^{\infty}\Big(x\psi(z)+x^2\omega(z)-\lambda \Big)\func (z)e^{-zx}\ud z - \lambda\\
&=\lambda\bigg( \int_0^{\infty} \Big((\psi \func )'(z)+(\omega \func )''(z)-\lambda \func (z)\Big)e^{-xz}\ud z -1\\
&\quad -xw(z)\func (z)e^{-xz}\bigg|_{z=0}^{z=\infty}+\Big(\psi(z)\func (z)+(\omega \func )'(z)\Big)e^{-xz}\bigg|_{z=0}^{z=\infty}\bigg).
\end{aligned}
\end{equation}
Let us prove that the right-hand side of the latter expression equals 0. Recall that $m'(z)=\frac{\psi(z)}{\omega(z)}$ and ${\tt I}'(z)=e^{m(z)}$, then
\begin{equation}
\label{eq_wg'}
(\omega \func )'(z)=-\psi(z)\func (z)-y_\lambda({\tt I}(z))e^{-\int_0^{{\tt I}(z)}y_\lambda(v) \ud v }\int_0^z e^{m(u)+2\int_0^{{\tt I}(u)}y_\lambda(v) \ud v }\ud u + e^{\int_0^{{\tt I}(z)}y_\lambda(v) \ud v }.
\end{equation}
In addition with the fact that  $y_\lambda$ is solution to \eqref{eq_h}, we deduce that $(\omega \func )''(z)=-(\psi \func )'(z)+\lambda \func (z)$ for any $z\geq 0$. 
 On the other hand, using \eqref{eq_conv2} and \eqref{eq_oitog}, we have that 
 \[
 xw(z)\func (z)e^{-xz}\bigg|_{z=0}^{z=\infty}=0,
 \]
  and from \eqref{eq_wg'}, together with \eqref{eq_convinty} and \eqref{eq_oitoint}, we deduce 
  \[
  \lim_{z\to \infty} (\psi(z)\func (z)+(\omega \func )'(z))e^{-xz}=0,
  \]  
  as soon as $x>0$. Therefore, it remains to study the previous limit  but when $z$ goes to $0$. According to \eqref{eq_wg'},
 \begin{equation}
 \label{eq_limwg}
 \lim_{z\to 0} (\psi(z)\func (z)+(\omega \func )'(z))e^{-xz}=1-\lim_{z\to 0} y_\lambda({\tt I}(z))\int_{0}^z e^{m(u)+2\int_0^{{\tt I}(u)}y_\lambda(v) \ud v }\ud u.
 \end{equation}
  By Lemma \ref{lemma_hq} and \eqref{eq_conv1}, we deduce
\[
 y_\lambda({\tt I}(z))\int_0^z e^{m(u)+2\int_0^{{\tt I}(u)}y_\lambda(v) \ud v }\ud u \leq \sqrt{\frac{\lambda}{\omega(z)}}e^{-m(z)}\int_0^z e^{m(u)+2\int_0^{{\tt I}(u)}y_\lambda(v) \ud v }\ud u \sim \sqrt{\frac{\lambda}{cz}}z, \quad\textrm{as} \quad z\to 0,
\]
 which implies that the right-hand side of \eqref{eq_limwg} equals 1. In other words,  the right-hand side of \eqref{eq_calcul} equals 0, meaning that $\U h_\lambda(x)=\lambda h_\lambda(x)$ for any $x>0$ and that \eqref{eq_vpgenerator} holds.

{\bf Step 3:} Our next step is to prove that $\int_0^{\infty}y_\lambda(v) \ud v $ is finite as soon as $\mathbb{P}_x(T_0<\infty)=1$, for any $x>0$, 
actually Lemma \ref{lemma_hq} is not enough to conclude. With this goal in mind, we fix $x>0$ and $\lambda\geq 0$ and set the function $\funcG_{\lambda,x} $ as follows, 
 \begin{equation*}
  \funcG_{\lambda,x} (v):=\int_0^{\infty}e^{-\lambda t}\E_x\Big[e^{-v Z_t}\Big]\ud t, \qquad \textrm{for any}  \quad v\geq 0.
 \end{equation*}
 This function is related with the Laplace transform of  $T_0$, indeed 
\[
 \lim_{v\to \infty} \lambda \funcG_{\lambda,x} (v)=\E_x\Big[e^{-\lambda T_0}\Big].
 \] 
The latter is positive since $\mathbb{P}_x(T_0<\infty)=1$. Our aim is to find a second formulation to $\funcG_{\lambda,x}$, related to $\int_0^{\infty}y_\lambda(v) \ud v $, to conclude. 

Let us provide some properties of $\funcG_{\lambda,x}$. We first  note that for any $h$ belonging to the domain of $\U$, the following identity holds
\[
 \lambda \int_0^{\infty}e^{-\lambda t}\E_x\Big[h(Z_t)\Big]\ud t= h(x) + \int_0^{\infty}e^{-\lambda t}\E_x\Big[\U h(Z_t)\Big]\ud t.
\]
By taking  $h(x)=e^{-vx}$ together with  identity  \eqref{eq_generator}, we deduce
 \begin{equation*}
 \begin{aligned}
 \lambda \funcG_{\lambda,x} (v)&=e^{-v x}+\int_0^{\infty} e^{-\lambda t} \E_x\left[\psi(v)Z_s e^{-v Z_s}+\omega(v)Z_t^2e^{-v Z_t}\right]\ud t\\
 &=e^{-vx}-\psi(v) \funcG^{\prime}_{\lambda,x} (v)+\omega(v) \funcG^{\prime\prime}_{\lambda,x} (v).
 \end{aligned}
\end{equation*}
 Moreover $\lambda \funcG_{\lambda,x}  (0)=1$ and the dominated convergence theorem implies
 \[
 \funcG^{\prime}_{\lambda,x} (v)= -\int_0^{\infty} e^{-\lambda t} \E_x[Z_t e^{-v Z_t}1_{\{Z_t>0\}}]\ud t \longrightarrow 0, \quad\textrm{as} \quad v\to \infty.
\]
 We  now prove that $\funcG_{\lambda,x} $ is the unique solution to $\omega(v) y''(v) - \psi(v) y'(v) -\lambda y(v)=e^{-vx}$ with conditions $\lambda y(0)=1$ and $\lim_{v\to\infty}y'(v)= 0$. In order to do so, we will explicit the set of functions that satisfy the equation with condition $\lambda y(0)=1$. First of all, let us prove that the following function, for any $v\geq 0$,
\begin{equation}
\label{def_funck}
\funck (v):=\frac{1}{\lambda}e^{-\int_0^{{\tt I}(v)}y_\lambda(s) \ud s } \left( 1+\lambda\int_0^v \int_u^{\infty} \frac{e^{-zx}}{\omega(z)}e^{-m(z)-\int_0^{{\tt I}(z)}y_\lambda(s) \ud s +m(u)+2\int_0^{{\tt I}(u)}y_\lambda(s) \ud s } \ud z \ud u \right)
\end{equation}
satisfies the same conditions as $\funcG_{\lambda,x} $.
We first observe that 
\begin{equation}\label{fubini}
\begin{split}
 \int_0^v \int_u^{\infty}& \frac{e^{-zx}}{\omega(z)}e^{-m(z)-\int_0^{{\tt I}(z)}y_\lambda(s) \ud s +m(u)+2\int_0^{{\tt I}(u)}y_\lambda(s) \ud s } \ud z \ud u \\
 &\hspace{3cm}= \int_0^{\infty} \frac{e^{-zx}}{\omega(z)}e^{-m(z)-\int_0^{{\tt I}(z)}y_\lambda(s) \ud s }\left( \int_0^{v\wedge z} e^{m(u)+2\int_0^{{\tt I}(u)}y_\lambda(s) \ud s } \ud u\right) \ud z
 \end{split}
\end{equation}
is finite according to~\eqref{eq_conv1}. In other words, $\funck$ is well defined.
Moreover, $\lambda \funck (0)=1$ and since ${\tt I}^{\prime}(z)=\exp(m(z))$, a straightforward computation gives
\begin{equation}
\label{eq_derk}
\funck^{\prime}(v)=-e^{m(v)}y_\lambda({\tt I}(v))\funck (v)+e^{m(v)+\int_0^{{\tt I}(v)} y_\lambda(s) \ud s} \int_v^{\infty} \frac{e^{-zx}}{\omega(z)}e^{-m(z)-\int_0^{{\tt I}(z)}y_\lambda(s) \ud s} \ud z.
\end{equation}
From~\eqref{eq_oitoint} and~\eqref{fubini}, we deduce that $ \funck $ is bounded by some constant $C$ on $\R$ and from Lemma \ref{lemma_hq}, we also see  that 
\[
\left| e^{m(v)}y_\lambda({\tt I}(v))\funck (v) \right| \leq C \sqrt{\frac{\lambda}{\omega(v)}} \longrightarrow 0, \qquad \textrm{as}\quad v\to+\infty.
\]
For the second term of the right-hand side of \eqref{eq_derk}, we use a similar arguments to those  used to deduce  \eqref{eq_oitoint} which gives
\[
\int_v^{\infty} \frac{e^{-xz}}{\omega(z)}e^{-m(z)-\int_0^{{\tt I}(z)}y_\lambda(s) \ud s} \ud z = {o} \left( e^{-m(v)-\int_0^{{\tt I}(v)}y_\lambda(s) \ud s-\frac{xv}{2}}\right), \qquad \textrm{as}\quad v\to\infty.
\]
That is to say that $\funck^{\prime}(v)$ converges to $0$ when $v$ goes to $\infty$. Finally, from~\eqref{eq_derk}, a straightforward computation provides
\begin{equation}
\label{eq_k}
\omega(v) \funck^{\prime\prime}(v)=\psi(v)\funck^{\prime}(v)+\lambda \funck (v)-e^{-vx}.
\end{equation}
Putting all pieces together, we prove that $\funck$ and $\funcG_{\lambda, x}$ satisfy the same differential equation with conditions $\lambda k(0)=1$ and $\lim_{v\to\infty}k'(v)= 0$.

Furthermore, from this, we deduce that the set of functions that satisfy $\omega(v) y''(v) - \psi (v)y'(v) -\lambda y(v)=e^{-vx}$ with conditions $\lambda y(0)=1$ is exactly $S:=\{\funck_A, A\in \R\}$, with
\[
 \funck_A(v) := \funck(v) + A  e^{-\int_0^{{\tt I}(v)}y_\lambda(s)\ud s } \int_0^v e^{m(u)+2 \int_0^{{\tt I}(u)}y_\lambda(s)\ud s} \ud u,
\]
Let us prove that $\lim_{v\to+\infty}\funck_A'(v)= 0$ if and only if $A=0$. Indeed,
\[
 \funck_A^\prime(v)=\funck^\prime(v)+ A e^{m(v)+\int_0^{{\tt I}(v)}y_\lambda(s)\ud s} \left[1- \frac{1}{\alpha(v)} \int_0^{v} e^{m(u)+2\int_0^{{\tt I}(u)}y_\lambda(s)\ud s}\ud u \right],
 \]
 where
  \[
   \frac{1}{\alpha(v)}:=y_\lambda({\tt I}(v))e^{-2\int_0^{{\tt I}(v)}y_\lambda(s)\ud s}.
\]
Using Lemma~\ref{lemma_hq}, we have
\[
 \alpha^\prime(v) {e^{-m(v)-2\int_0^{{\tt I}(v)}y_\lambda(s)\ud s}}=-\frac{y_\lambda^\prime({\tt I}(v))}{y_\lambda({\tt I}(v))^2} + 2=1+\lambda \frac{r^2({\tt I}(v))}{y_\lambda^2({\tt I}(v))}\geq 2,
\]
for any $v$ large enough.
In other words, there exist $v_0>0$ such that for any $v\geq v_0$,
\[
 \frac{1}{\alpha(v)} \int_0^{v} e^{m(u)+2\int_0^{{\tt I}(u)}y_\lambda(s)\ud s}\ud u  \leq \frac{1}{2}+\frac{1}{\alpha(v)} \int_0^{v_0} e^{m(u)+2\int_0^{{\tt I}(u)}y_\lambda(s)\ud s}\ud u .
\]
Since $\lim_{v\to\infty}\alpha(v)=\infty$, the latter inequality guarantees
\[
 \limsup_{v\to \infty} \frac{1}{\alpha(v)} \int_0^{v} e^{m(u)+2\int_0^{{\tt I}(u)}y_\lambda(s)\ud s}\ud u \leq \frac{1}{2}.
\]
In addition to the expression of $\funck_A^\prime$, we deduce that $\lim_{v\to\infty}\funck_A'(v)= 0$ if and only if $A=0$. Thus there exist a unique function in $S$ that satisfies $\lim_{v\to\infty}\funck_A'(v)= 0$. Finally, since both $\funck$ and $\funcG_{\lambda, x}$ belong to $S$ and satisfy the condition, then both functions are equals on $\R$.

Furthermore, with a direct application of Fubini's theorem
\[
 \lim_{v\to \infty}  \int_0^v \int_u^{\infty} \frac{e^{-zx}}{\omega(z)}e^{-m(z)-\int_0^{{\tt I}(z)}y_\lambda(s)\ud s+m(u)+2\int_0^{{\tt I}(u)}y_\lambda(s)\ud s} \ud z \ud u =\int_0^{\infty}e^{-zx} \func (z)\ud z>0.
 \]
In addition with \eqref{def_funck}, we get
 \[
  e^{-\int_0^{\infty}y_\lambda(s)\ud s} \left( 1+\lambda\int_0^{\infty}e^{-zx} \func (z)\ud z \right)= \lim_{v\to\infty}\lambda \funck (v)=\lim_{v\to\infty}\lambda \funcG_{\lambda, x} (v)=\E_x\Big[e^{-\lambda T_0}\Big]>0.
 \]
We conclude that $\int_0^{\infty}y_\lambda(v)\ud v$ is finite and
 \begin{equation}
 \label{eq_formula0}
 \E_x\Big[e^{-\lambda T_0}\Big]= e^{-\int_0^{\infty}y_\lambda (v)\ud v} \left( 1+\lambda\int_0^{\infty} e^{-zx}\func (z)\ud z \right).
 \end{equation}

{\bf Step 4:} We next prove that $h_\lambda(0)=\exp\{{\int_0^{\infty}y_\lambda}(v)\ud v\}$. The main issue comes from the fact that we can not make $x$ tend to $0$ directly in the formula of $h_\lambda$ since we do not know the integrability of $\func (z)$ near $\infty$. However, from \eqref{eq_conv1} we know that for any $v\in(0,\infty)$,
\[
\lambda \int_0^{\infty} \frac{1}{\omega(z)}e^{-m(z)-\int_0^{{\tt I}(z)}y_\lambda(s)\ud s} \int_0^{z\wedge v} e^{m(u)+2\int_0^{{\tt I}(u)}y_\lambda(s)\ud s} \ud u \ud z <\infty.
\]
The goal is to take $v$ near $\infty$. Using Fubini's theorem and twice the following  change of variables $z\mapsto {\tt I}(z)$, we find
\[
\begin{split}
\lambda \int_0^{\infty}& \frac{1}{\omega(z)}e^{-m(z)-\int_0^{{\tt I}(z)}y_\lambda(s)\ud s} \int_0^{z\wedge v} e^{m(u)+2\int_0^{{\tt I}(u)}y_\lambda(s)\ud s} \ud u \ud z \\
&\hspace{4cm}= \int_0^{{\tt I}(v)}e^{2\int_0^u y_\lambda(s)\ud s} \int_u^{\infty}\lambda \frac{e^{-2m(\varphi(z))}}{w(\varphi(z))}e^{-\int_0^z y_\lambda(s)\ud s} \ud z \ud u.
\end{split}
\]
Recalling that, according to Lemma~\ref{lemma_hq}, 
\[
\lambda\frac{e^{-2m(\varphi(z))}}{w(\varphi(z))}=\lambda\frac{\varphi'(z)^2}{w(\varphi(z))}= y_{\lambda}^2(z)-y_\lambda'(z),
\]
and using integration by parts on the term $y_\lambda^2(z)e^{-\int_0^zy_\lambda}$, we finally deduce
\[
\lambda \int_0^{\infty} \frac{1}{\omega(z)}e^{-m(z)-\int_0^{{\tt I}(z)}y_\lambda(s)\ud s} \int_0^{z\wedge v} e^{m(u)+2\int_0^{{\tt I}(u)}y_\lambda(s)\ud s} \ud u \ud z = e^{\int_0^{{\tt I}(v)}y_\lambda(s)\ud s}-1.
\]
Since the integrand is positive, we let $v$ tend to $\infty$ to find
\begin{equation}
 \label{eq_flambda0}
h_\lambda(0)= 1+ \lambda \int_0^{\infty} \frac{1}{\omega(z)}e^{-m(z)-\int_0^{{\tt I}(z)}y_\lambda(s)\ud s} \int_0^{z} e^{m(u)+2\int_0^{{\tt I}(u)}y_\lambda(s)\ud s} \ud u \ud z  = e^{\int_0^{\infty}y_\lambda(s)\ud s}
\end{equation}
which is finite according to the previous step.

{\bf Step 5:} We now prove identity~\eqref{eq_resultLaplace}. First, let us assume that $x\geq a>0$ and define for any $n\in \N$, 
\[
\theta_n=\inf\{t\geq 0, Z_t \geq n\}.
\]
 Recalling that $\mathcal{U}h_\lambda=\lambda h_\lambda$ and that the expression of $\mathcal{U}$ is given by \eqref{infgenerator}, we deduce from It\^o's formula applied to $Z_{t\wedge T_a \wedge \theta_n}$ and $C^2$-function $(t,u)\mapsto e^{-\lambda t}h_\lambda(u)$ that, for any $n\in \N$,
\begin{equation}\label{fitobis}
\begin{aligned}
e^{-\lambda t\wedge T_a \wedge \theta_n}h_\lambda(Z_{t\wedge T_a \wedge \theta_n})=&h_\lambda(x)+\int_0^{t\wedge T_a\wedge \theta_n}e^{-\lambda s}h_\lambda'(Z_{s})\sqrt{2\gamma^2 Z_{s}} \ud B_s+\int_0^{t\wedge T_a\wedge \theta_n} \sigma e^{-\lambda s}h_\lambda'(Z_s) Z_s \ud B^{(e)}_s \\
&+\int_0^{t\wedge T_a\wedge \theta_n}\int_{(0,1)}\int_0^{Z_{s-}}e^{-\lambda s} \left( h_\lambda(Z_{s-}+z)-h_\lambda(Z_{s-})\right)\widetilde N^{(b)}(\ud s,\ud z,\ud u)\\
&+\int_0^{t\wedge T_a\wedge \theta_n}\int_{[1,{\infty})}\int_0^{Z_{s-}}e^{-\lambda s} \left( h_\lambda(Z_{s-}+z)-h_\lambda(Z_{s-})\right) N^{(b)}(\ud s,\ud z,\ud u)\\
&-\int_0^{t\wedge T_a\wedge \theta_n}e^{-\lambda s} Z_s \int_{[1,{\infty})}\  \left( h_\lambda(Z_{s}+z)-h_\lambda(Z_{s})\right)\mu(\ud z)\ud s,
\end{aligned}
\end{equation}
where all terms are well defined since
$h_\lambda$ is positive non-increasing, $h'_\lambda$ is negative non-decreasing, and $(Z_{s}, s \leq t\wedge T_a\wedge \theta_n)$ take values on $[a,n]$. According to the same arguments, the three first integrals of the r.h.s. of \eqref{fitobis} are martingales. Since
\[
\E\left[\int_0^{t\wedge T_a\wedge \theta_n}\int_1^{\infty}\int_0^{Z_{s}}\left|e^{-\lambda s} \left( h_\lambda(Z_{s}+z)-h_\lambda(Z_{s-})\right)\right| \mu(\ud z)\ud s\ud u\right] \leq 2 h_\lambda(a)tn \int_1^\infty \mu(\ud z)< +\infty,
\]
the fourth and integrals of the r.h.s. of \eqref{fitobis} can be written as a martingale by observing that the fifth integral is exactly the compensator. Finally,
taking the expectation of \eqref{fitobis}, we obtain for any $n\ge 1$ and $t\geq 0$ that
\[
\E_x\left[ e^{-\lambda t\wedge T_a\wedge \theta_n}h_\lambda(Z_{t\wedge T_a\wedge \theta_n}) \right] =h_\lambda(x).
\]
Since $h_\lambda$ is bounded by $h_\lambda(0)<\infty$, we use the dominated convergence theorem and make  $n$ go to $0$ and $t$ go to $\infty$. Since $\mathbb{P}_x(T_a<\infty)=1$  (recall that we are assuming that  $\mathbb{P}_x(T_0<\infty)=1$) and thus $Z_{T_a}=a$,  $\mathbb{P}_x$-a.s., we deduce \eqref{eq_resultLaplace} for $x\geq a>0$. For $a=0$, identity \eqref{eq_resultLaplace} has already been obtained in \eqref{eq_formula0} and \eqref{eq_flambda0}.

{\bf Step 6:} Next, we handle the result on the expectation of $T_0$, i.e. identity \eqref{eq_resultexp}, using similar arguments as in the proof of Theorem 3.9 in \cite{Lambert2005}. We denote $H(t,\lambda)$ for the Laplace transform $\E_{x}[e^{-\lambda Z_t}]$, and observe
$$
\lim_{\lambda\to \infty} \int_0^{\infty}(1-H(t,\lambda))\ud t =\E_x\left[\int_0^\infty \mathbf{1}_{\{Z_t>0\}} dt \right]=\E_x\Big[T_0\Big].
$$
On the other hand, from \eqref{eq_generator}, for any $t\geq 0, \lambda>0$,
$$
\frac{\partial H}{\partial t}(t,\lambda)=-\psi(\lambda)\frac{\partial H}{\partial \lambda}(t,\lambda)+\omega(\lambda)\frac{\partial^2 H}{\partial \lambda^2}(t,\lambda)=\omega(\lambda)e^{m(\lambda)}\frac{\partial}{\partial\lambda}\left( \frac{\partial H}{\partial\lambda}(t,\lambda) e^{-m(\lambda)}\right),
$$
which, by integrating $\frac{\partial H}{\partial \lambda}e^{-m(\lambda)}$ with respect to $\lambda$ yields to
$$
\frac{\partial H}{\partial \lambda}(t,\lambda)=-e^{m(\lambda)}\int_\lambda^\infty \frac{e^{-m(u)}}{\omega(u)}\frac{\partial H}{\partial t}(t,u)\ud u
$$
 and then integrating again with respect to  $\lambda$ and $t$ on $[0,\lambda]\times \R$, we obtain
$$
\int_0^{\infty}(1-H(t,\lambda))\ud t = \int_0^{\lambda} e^{m(u)}\int_u^{\infty}\frac{e^{-m(z)}}{\omega(z)}(1-e^{-zx})\ud z \ud u.
$$
Letting $\lambda$ go to $\infty$, we deduce \eqref{eq_resultexp}.
The proof of Theorem \ref{theo_Tatteinte} is now complete.
\end{proof}

\section{Branching diffusion with interactions in a Brownian environment}\label{interactions}

We finish this paper with some interesting remarks on  branching diffusions with interactions in a Brownian environment. We decide to treat this case separately since  the competition mechanism $g$ may take negative and positive values and the techniques we use here are different from the rest of the paper. Our methodology are based on the theory of scale functions for diffusions. This allow us to provide a necessary and  sufficient condition  for extinction and moreover, the Laplace transform of  hitting times is computed explicitly in terms of a Ricatti equation. Such results seems complicated to obtain with the presence of jumps coming from the branching mechanism or the random environment

More general competition mechanisms were  considered by  Ba and Pardoux \cite{BPa} in the case when the branching mechanism is of the form $\psi(u)=\gamma^2 u^2$, for $u\ge 0$, see also Chapter 8 in the monograph of Pardoux \cite{Pardoux}. In this case, the CB-process with competition can be written as the unique strong solution of the following SDE
\[
Y_t=Y_0+\int_0^t h(Y_s)\ud s+\int_0^t\sqrt{2\gamma^2Y_s}\ud B^{(b)}_s,
\]
where $h$ is a continuous function satisfying $h(0)=0$ and such that 
\[
h(x+y)-h(x)\le K y, \qquad x, y\ge 0,
\]
for some positive constant $K$. According to Ba and Pardoux, the process $Y$ gets extinct in finite time if and only if
\[
\int_1^\infty\exp\left\{-\frac{1}{2}\int_1^u \frac{h(r)}{r}\ud r\right\}\ud u=\infty.
\]

Here, we focus on the  Feller diffusion case and general competition mechanism where  more explicit functionals of the process can be computed.
In this particular case, the process that we are interested on is defined by the unique strong solution of 
\begin{equation}\label{SDEBB}
Z_t=Z_0+b\int_0^tZ_s \ud s-\int_0^tg(Z_s)\ud s+\int_0^t\sqrt{2\gamma^2Z_s}\ud B^{(b)}_s+\int_0^t \sigma Z_{s}\ud  B^{(e)}_s,
\end{equation}
where $g$ is a real-valued continuous function satisfying the conditions in Proposition 1 in Palau and Pardo \cite{PP}. 

Our first main result provides a necessary and sufficient  condition for the process $Z$ defined by \eqref{SDEBB} to become extinct

\begin{theorem}
\label{theo_BB1}
Assume that $Z$ is the unique strong solution of  \eqref{SDEBB}, then
\begin{equation}
\label{equivalence}
\P_x\Big(T_0 < \infty\Big)=1 \qquad\textrm{if and only if}\qquad  \int^{\infty} \exp\left\{ 2\int_1^{u} \frac{g(z)-bz}{2\gamma^2 z+\sigma^2 z^2} \ud z \right\}\ud u=\infty.
\end{equation}
Moreover 
\[
\P_x\left(\lim_{t\to \infty} Z_t=\infty\right)=1-\P_x\Big(T_0 < \infty\Big).
\]
\end{theorem}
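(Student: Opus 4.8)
The plan is to treat $Z$ as a regular one-dimensional diffusion on $(0,\infty)$, absorbed at $0$, and to read the dichotomy off its scale function. On $(0,\infty)$ the continuous martingale part of $Z$ has quadratic variation density $a(x):=2\gamma^2x+\sigma^2x^2>0$, so the two driving Brownian motions merge and, by \eqref{SDEBB}, up to $T_0$ (and up to a possible explosion) $Z$ solves $\ud Z_t=(bZ_t-g(Z_t))\,\ud t+\sqrt{a(Z_t)}\,\ud\beta_t$ for a single Brownian motion $\beta$; its generator $\mathcal{L}$ is \eqref{infgenerator} with $\mu\equiv0$. The scale function $s$, normalised by $s(1)=0$, solves $\mathcal{L}s=0$, whence
\[
s'(x)=\exp\left\{2\int_1^x\frac{g(z)-bz}{2\gamma^2z+\sigma^2z^2}\,\ud z\right\},\qquad x>0,
\]
so the integral in \eqref{equivalence} is exactly $\int^\infty s'(u)\,\ud u=s(\infty)$, which is infinite iff $s(\infty):=\lim_{x\to\infty}s(x)=+\infty$. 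Since $\lim_{x\to0}x^{-1}g(x)$ exists, the integrand above is bounded near $0$, so $s'(0+)\in(0,\infty)$ and $s(0+)>-\infty$; combining this with the near-$0$ Feller-diffusion behaviour (here $\gamma>0$), Feller's boundary classification shows $0$ is an \emph{exit} (accessible, absorbing) boundary, so in particular on $\{Z_t\to0\}$ one necessarily has $T_0<\infty$.

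With $T_n=\inf\{t:Z_t\ge n\}$, It\^o's formula and $\mathcal{L}s=0$ make $(s(Z_{t\wedge T_0\wedge T_n}))_{t\ge0}$ a bounded martingale under $\P_x$ for $0<x<n$; since a regular diffusion exits every compact subinterval of $(0,\infty)$ in finite time a.s.\ and $0$ is accessible, $T_0\wedge T_n<\infty$ $\P_x$-a.s., and optional stopping gives
\[
\P_x(T_0<T_n)=\frac{s(n)-s(x)}{s(n)-s(0+)},\qquad\text{hence}\qquad \P_x(T_0<T_\infty)=\frac{s(\infty)-s(x)}{s(\infty)-s(0+)}
\]
by letting $n\uparrow\infty$, where $T_\infty=\lim_n T_n$ is the explosion time.

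If $\int^\infty s'=\infty$, the last quantity equals $1$; moreover $(s(Z_{t\wedge T_0})-s(0+))_{t\ge0}$ is then a non-negative supermartingale, hence converges a.s.\ to a finite limit, so $Z_{t\wedge T_0}$ converges a.s.\ to a point of $[0,\infty)$. It cannot converge to an interior point, for otherwise $\langle s(Z)\rangle_\infty=\int_0^\infty s'(Z_u)^2a(Z_u)\,\ud u=+\infty$ would contradict the convergence of the (local) martingale; so on $\{T_0=\infty\}$ one has $Z_t\to0$, whence $T_0<\infty$ by the exit character of $0$, a contradiction, and thus $\P_x(T_0<\infty)=1$. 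Conversely, if $\int^\infty s'<\infty$ then, $s$ being strictly increasing, $\P_x(T_0<\infty)\le\P_x(T_0<T_\infty)=(s(\infty)-s(x))/(s(\infty)-s(0+))<1$. For the last identity it suffices to show $Z_t\to\infty$ a.s.\ on $\{T_0=\infty\}$: when $s(\infty)=\infty$ this is vacuous since $\P_x(T_0=\infty)=0$; when $s(\infty)<\infty$, on $\{T_0=\infty\}$ either $T_\infty<\infty$ (explosion, so $Z_t\to\infty$) or $T_\infty=\infty$, and then $s(Z_t)$ is a bounded continuous local martingale on $[0,\infty)$, hence a true bounded martingale, which converges a.s.\ to some $L\in[s(0+),s(\infty)]$, forcing $Z_t\to s^{-1}(L)\in[0,\infty]$; the limit is neither interior (same quadratic-variation argument) nor $0$ (exit character of $0$ again), so $Z_t\to\infty$. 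Since $\{T_0<\infty\}$ and $\{\lim_tZ_t=\infty\}$ are disjoint and exhaustive, $\P_x(\lim_tZ_t=\infty)=1-\P_x(T_0<\infty)$.

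The step I expect to be the main obstacle is the boundary bookkeeping at $\infty$ when $s(\infty)<\infty$: one must exclude that $Z$ lingers in $(0,\infty)$ forever without exploding, being absorbed at $0$, or tending to $\infty$, which needs the bounded-martingale convergence of $s(Z)$ together with the quadratic-variation obstruction to convergence at an interior point and with the exit character of $0$ to rule out $Z_t\to0$ on $\{T_0=\infty\}$. Verifying rigorously that $0$ is an exit boundary (Feller's test with the precise near-$0$ asymptotics of $s'$ and of the speed density, which is $\propto 1/(s'a)$ and diverges logarithmically at $0$) is the other point deserving care.
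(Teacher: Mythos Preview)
Your approach is essentially the paper's: collapse the two Brownian drivers into one (the paper invokes Dubins--Schwarz, you phrase it via the quadratic variation), and then run the standard scale-function analysis for a one-dimensional diffusion; the exit-probability formula you write is exactly the paper's \eqref{eq_PT0}, and letting the upper barrier tend to infinity yields \eqref{equivalence} in both arguments.

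The one substantive difference is the treatment of the boundary $0$. You assert that $0$ is exit/accessible and flag the Feller-test verification as the point ``deserving care''; this is precisely where the paper does real work. Rather than appealing to $s'(0+)\in(0,\infty)$ (which by itself is not enough for accessibility), the paper computes the Karlin--Taylor quantity
\[
\Sigma(l,x)=\int_l^x\left(\int_u^x\frac{\ud\eta}{\mathtt d(\eta)\,s(\eta)}\right)s(u)\,\ud u
\]
directly, using only $|\mathtt b(z)|\le\varepsilon$ near $0$ to bound the integrand by a multiple of $u^{-2\varepsilon/\gamma^2}$, integrable once $\varepsilon$ is small; finiteness of $\Sigma(0,x)$ then gives $T_0\wedge T_y<\infty$ a.s.\ via Lemma~15.6.2 of Karlin--Taylor, which is exactly what both of your martingale steps need. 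For the ``moreover'' clause your bounded-martingale/quadratic-variation argument is actually more explicit than the paper's; the paper simply bounds $\P_x(\lim Z_t=\infty)$ below by $\P_x(T_y<T_0)$ and lets $y\to\infty$.
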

In particular, we may have the following situations
\begin{itemize}
\item[i)] If there exist $z_0>0$ and $w<b-\frac{\sigma^2}{2}$ such that for any $z\geq z_0$, $g(z)\leq wz$, then $\P_x(T_0<\infty)<1$.
An example of this situation is the cooperative case, that is to say when $g(z)$ is decreasing and $b>\frac{\sigma^2}{2}$.
\item[ii)] If there exist $z_0>0$ and $w>b-\frac{\sigma^2}{2}$ such that for any $z\geq z_0$, $g(z)\geq wz$, then $\P_x(T_0<\infty)=1$.\\
 An example of this situation are large competition mechanisms, that is to say for $g(z)\geq bz$ for any $z$ large enough. For instance, the latter holds for the so-called logistic case i.e. $g(z)=cz^2$. 
\end{itemize}

\begin{proof}[Proof of Theorem~\ref{theo_BB1}] 

We first observe  from Dubins-Schwarz Theorem,  that the law of $Z$ is equal to the law of the following diffusion
$$
\ud Y_t = (bY_t-g(Y_t))\ud t -\sqrt{2\gamma^2 Y_t+\sigma^2 Y_t^2} \ud W_t,
$$
where $W$ is a standard Brownian motion. Associated to $Y$, we introduce  for any $z\in \R$,
\[
\mathtt{b}(z):=g(z)-bz, \qquad \mathtt{d}(z):=\frac{1}{2}\Big(2\gamma^2z+\sigma^2 z^2\Big),
 \]
as well as the following functions related with the scale function of $Y$, for any $x,l\in \R_+$
\[
s(l)= \exp\left\{ \int_1^{l}\frac{\mathtt{b}(z)}{\mathtt{d}(z)} \ud z \right\}, 
 \quad S(l,x)=\int_l^x s(u) \ud u\quad \textrm{and}\quad
 \Sigma(l,x)=\int_l^x \left( \int_u^x \frac{1}{\mathtt{d}(\eta) s(\eta)}\ud \eta  \right) s(u)\ud u.
\]
Observe that for any $x\in \R_+$, 
\begin{equation}\label{scale_function}
S(0,x)=\int_0^x \exp\left\{2\int_1^u \frac{g(z)-bz}{2\gamma^2z+\sigma^2 z^2}\ud z \right\}\ud u.
\end{equation}
For simplicity, we denote $S(x)=S(0,x)$.

In order to prove the first statement of this proposition, we follow the approach of Chapter 15 in Karlin and Taylor \cite{KT1981} which ensures that the equivalence \eqref{equivalence} follows from the fact that $\lim_{l\to 0} \Sigma(l,x)$ is finite. Indeed, According to Lemma 15.6.3 in  \cite{KT1981}, the finiteness of $\lim_{l\to 0} \Sigma(l,x)$ for an $x>0$ implies the finiteness of $\lim_{l\to 0} S(l,x)=S(0,x)$ for all $x\geq 0$. Then Lemma 15.6.2 in  \cite{KT1981} guarantees that for any $y\geq x$,
$T_0 \wedge T_y < \infty, $ a.s., and Section 3 of Chapter 15 provides the following formulation
\begin{equation}
\label{eq_PT0}
\P_x(T_0 < T_y)=\frac{S(x)-S(y)}{S(0)-S(y)}.
\end{equation}
By making $y$ tend to $\infty$, we find the equivalence \eqref{equivalence} as required.

Hence let us show that $\lim_{l\to 0} \Sigma(l,x)$ is finite. In order to do so, we  fix $\varepsilon>0$ and $x\in (0,1)$ in such a way  that for any $z\leq x$, $|\mathtt{b}(z)|\le \varepsilon$. Therefore
\begin{equation}
\label{calcul_sigma}
\begin{aligned}
\Sigma(l,x) &= \int_l^x \left( \int_u^x \frac{1}{\mathtt{d}(\eta)}\exp\left\{ \int_\eta^{1}\frac{\mathtt{b}(z)}{\mathtt{d}(z)} \ud z  \right\}\ud \eta  \right) \exp\left\{-\int_u^{1}\frac{\mathtt{b}(z)}{\mathtt{d}(z)} \ud z \right\}\ud u \\
&\leq C_1(x) \int_l^x \left( \int_u^x \frac{1}{\mathtt{d}(\eta)}\exp\left\{  \int_\eta^{x}\frac{\varepsilon}{\mathtt{d}(z)} \ud z  \right\}\ud \eta  \right) \exp\left\{ \int_u^{x}\frac{\varepsilon}{\mathtt{d}(z)} \ud z \right\}\ud u\\
& \leq C_2(x) \int_l^x \left( \int_u^x \frac{1}{\mathtt{d}(\eta)}\left(\frac{1+\frac{\sigma^2}{2\gamma^2}\eta}{\eta}\right)^{\varepsilon/\gamma^2} \ud \eta \right) \left(\frac{1+\frac{\sigma^2}{2\gamma^2}u}{u}\right)^{\varepsilon/\gamma^2} \ud u, \\
\end{aligned}
\end{equation}
where $C_1(x)$ and $C_2(x)$ are positive constants that only depend on $x$.
Moreover, in  a neighbourhood of $0$, we have
$$
\frac{1}{\mathtt{d}(\eta)}\left(\frac{1+\frac{\sigma^2}{2\gamma^2}\eta}{\eta}\right)^{\varepsilon/\gamma^2} \underset{\eta\to 0}{\sim} \frac{1}{2\gamma^2} \frac{1}{\eta^{1+\varepsilon/\gamma^2}},
$$
which is not integrable at $0$. Hence,
$$
\int_u^x \frac{1}{\mathtt{d}(\eta)} \left(\frac{1+\frac{\sigma^2}{2\gamma^2}\eta}{\eta}\right)^{\varepsilon/\gamma^2} \ud \eta \underset{u \to 0}{\sim}  C_3(x) \frac{1}{u^{\varepsilon/\gamma^2}},
$$
where $C_3(x)$ is a positive constant that only depends on $x$.
This implies that the integrand on the right-hand side  of the last inequality in \eqref{calcul_sigma} is equivalent to $u^{-2\varepsilon/\gamma^2}$ which is integrable at $0$ as soon as $\varepsilon$ is chosen small enough. The latter implies that $\lim_{l\to 0} \Sigma(l,x)<\infty$ which completes the first statement of  this proposition.

In order to finish the proof, note that for any $y>x$,
$$
\P_x\left(\lim_{t\to \infty} Z(t)=\infty\right) \geq \P_x(T_y<T_0)=\frac{S(0)-S(x)}{S(y)-S(0)}.
$$
Since it  holds  for any $y \geq x$, we can take $y$ goes to $\infty$. By writing $S(\infty):=\lim_{y\to \infty} S(y)\in(0,\infty]$, we deduce
$$
\P_x\left(\lim_{t\to \infty} Z(t)=\infty\right) \geq \frac{S(0)-S(x)}{S(\infty)-S(0)},
$$
and the right-hand side  is equal to $1-\P_x(T_0<\infty)$ according to \eqref{eq_PT0}, whenever $S(\infty)$ is finite or not. This ends the proof.
\end{proof}

Our second result gives a formulation the Laplace transform of the first passage time 
\[
T_a=\inf\{t: Z_t\le a\}, \qquad \textrm{for} \quad a\ge 0,
\]  
by using the solution to the Ricatti equation described in the next Lemma and depending on the scale function $S$ defined by \eqref{scale_function}. The proof of Proposition ~\ref{theo_BB1} guarantees that $S$ is well-defined. Moreover, it is clear that the function $S:\R_+\to(0,S(\infty))$ is  continuous and bijective, and under condition~\eqref{equivalence}, $S(\infty)$ equals $\infty$. We denote by $\bar\varphi(x)$ the inverse of $S$ on $(0,S(\infty))$. Following similar arguments  to those provided in the proof of Lemma 2.1 in Lambert \cite{Lambert2005}, we deduce the following properties on the solution to the Ricatti equation that we are interested in.

\begin{lemma}
 \label{lemma_hqbis}
 For any $\lambda>0$, there exists a unique non-negative solution $\bar y_\lambda$ on $(0,S(\infty))$ to the equation
 \begin{equation*}
 y'=y^2-\lambda \bar r^2,
 \end{equation*}
where \[
\bar r(z)=\frac{ \bar \varphi'(z)}{\sqrt{\gamma^2\bar\varphi(z)+\frac{\sigma^2}{2} (\bar\varphi(z))^2}},
\]
 such that it vanishes at $S(\infty)$. Moreover, $\bar y_\lambda$ is positive on $(0,S(\infty))$, and for any $z$ sufficiently small or close to $S(\infty)$, $\bar y_\lambda(z) \leq \sqrt{\lambda}\bar r(z)$. In particular, $\bar y_\lambda$ is integrable at $0$ if $\gamma\neq 0$, and it decreases initially and ultimately.
 \end{lemma}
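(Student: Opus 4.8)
The plan is to carry over, \emph{mutatis mutandis}, the proof of Lemma \ref{lemma_hq} — itself modelled on Lemma 2.1 of Lambert \cite{Lambert2005} — replacing $\omega$, $\varphi$, ${\tt I}$ and $r$ by $\mathtt{d}(z)=\gamma^2 z+\tfrac{\sigma^2}{2}z^2$, $\bar\varphi=S^{-1}$, $S$ and $\bar r$. The key device is linearisation: writing $y=-u'/u$ turns $y'=y^2-\lambda\bar r^2$ into the linear equation $u''=\lambda\bar r^2 u$ on $(0,S(\infty))$, under which a \emph{non-negative} solution $y$ corresponds exactly to a \emph{positive, non-increasing} solution $u$, and $y$ vanishes at $S(\infty)$ iff $u'(z)/u(z)\to 0$ there. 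Since $\lambda\bar r^2\ge 0$, positive solutions of the linear equation are convex, the equation is disconjugate, and it admits a principal (recessive) solution at the endpoint $S(\infty)$; this is the candidate for $u$.

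First I would record the elementary facts about $\bar r$. By the proof of Proposition \ref{theo_BB1}, $S$ is a $C^1$ increasing bijection of $\R_+$ onto $(0,S(\infty))$ with $s=S'$ continuous and bounded away from $0$ and $\infty$ near $0$; hence $\bar\varphi$ is $C^1$, $\bar r$ is continuous and strictly positive on $(0,S(\infty))$, and near $0$ one has $\mathtt{d}(\bar\varphi(z))\sim\gamma^2\bar\varphi(z)$ and $\bar\varphi(z)\sim z/s(0)$, so $\bar r(z)\sim(\gamma\sqrt{s(0)})^{-1}z^{-1/2}$ when $\gamma\ne 0$; in particular $\bar r$ is integrable at $0$. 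Then I would construct $\bar y_\lambda$ by exhaustion: for $z_n\uparrow S(\infty)$ let $q_n$ be the non-negative solution of the Riccati equation on $(0,z_n)$ obtained from the solution $u_n$ of $u''=\lambda\bar r^2 u$ that is positive on $(0,z_n)$ and vanishes at $z_n$, so that $q_n(z_n^-)=+\infty$ and, by convexity, $u_n'<0$ hence $q_n\ge 0$ on $(0,z_n)$. By uniqueness (non-crossing) of Riccati trajectories, $q_{n+1}<q_n$ on $(0,z_n)$, so $q_n\downarrow\bar y_\lambda\ge 0$ and $\bar y_\lambda$ solves $y'=y^2-\lambda\bar r^2$ by local uniform convergence; it is strictly positive in the interior since a zero of $\bar y_\lambda$ would, through the equation, force it negative immediately afterwards. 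Uniqueness among non-negative solutions vanishing at $S(\infty)$ is then immediate: if $y_1<y_2$ were two such, $\delta=y_2-y_1>0$ would satisfy $\delta'=(y_1+y_2)\delta\ge 0$, so $\delta$ is non-decreasing and cannot tend to $0$ at $S(\infty)$.

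It remains to prove $\bar y_\lambda(z)\le\sqrt\lambda\,\bar r(z)$ near $0$ and near $S(\infty)$, and to deduce the rest. The barrier $w=\sqrt\lambda\,\bar r$ satisfies $w'-(w^2-\lambda\bar r^2)=\sqrt\lambda\,\bar r'$, so on intervals where $\bar r$ is monotone it is a super- or a sub-solution according to the sign of $\bar r'$; combining this with the endpoint asymptotics of $\bar r$ and a no-crossing comparison with the approximants $q_n$ traps $\bar y_\lambda$ below $w$ near $0$ and near $S(\infty)$. Integrability of $\bar y_\lambda$ at $0$ (for $\gamma\ne 0$) then follows from $\bar y_\lambda\le\sqrt\lambda\,\bar r$ and the integrability of $\bar r$ at $0$, and $\bar y_\lambda'=\bar y_\lambda^2-\lambda\bar r^2\le 0$ near the endpoints gives that $\bar y_\lambda$ decreases initially and ultimately.

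The hard part is the behaviour at $S(\infty)$: one must show that the recessive solution genuinely has vanishing logarithmic derivative there (equivalently $\bar y_\lambda(z)\to 0$) and obtain the two-sided control near $S(\infty)$, and both hinge on the asymptotics of $\bar r$ at $S(\infty)$. Via the substitution $z=S(y)$ these reduce to the growth of $s(y)=\exp\{\int_1^y(g(z)-bz)/\mathtt{d}(z)\,\ud z\}$ as $y\to\infty$, and in particular to its interplay with the extinction condition \eqref{equivalence} under which the lemma is applied; pinning this down around the critical regime $g(z)\sim(b-\tfrac{\sigma^2}{2})z$ is delicate and is where the argument needs the most care, just as in the analogous step of Lambert \cite{Lambert2005}. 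Finally, the case $\gamma=0$ requires separate bookkeeping, since then $\bar r$ need not be integrable at $0$ and only the weaker assertions of the statement (in particular, only the ``ultimately'' part of the monotonicity) survive near $0$.
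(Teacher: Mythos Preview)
Your approach is essentially the same as the paper's: the paper does not give a proof of this lemma at all but simply refers the reader to the arguments of Lemma~2.1 in Lambert~\cite{Lambert2005}, and your linearisation $y=-u'/u$, construction of the principal solution by exhaustion, and barrier comparison with $\sqrt{\lambda}\,\bar r$ are precisely that argument. You have in fact supplied considerably more detail than the paper does.

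One small correction to your final paragraph: the lemma asserts ``decreases initially and ultimately'' unconditionally; only the integrability at $0$ carries the qualifier $\gamma\ne 0$. The initial decrease follows from $\bar y_\lambda\le\sqrt{\lambda}\,\bar r$ near $0$, which in turn comes from the barrier comparison once $\bar r$ is (ultimately) monotone there, and this does not require $\gamma\ne 0$.
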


Our next result provides explicitly the Laplace transform of $T_a$ in terms of the function $\bar y_\lambda$.
 \begin{proposition}
 \label{theo_BB2}
Assume that $\gamma >0$. Then, for any $x\geq a \geq 0$, and for any $\lambda >0$,
 \begin{equation}
 \label{eq_BB2}
 \E_x\left[e^{-\lambda T_a} \right]=\exp\left\{-\int_{S(a)}^{S(x)}\bar y_\lambda(u) \ud u\right\}.
 \end{equation}
 Note that if \eqref{equivalence} is satisfied, then $T_a<\infty$ a.s.\\
 \end{proposition}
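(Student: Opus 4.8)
The plan is to use the classical scale-function and Ricatti method for one-dimensional diffusions, as in the proof of Lemma~2.1 of Lambert~\cite{Lambert2005} and of Step~5 in the proof of Theorem~\ref{theo_Tatteinte}. As in the proof of Theorem~\ref{theo_BB1}, by the Dubins--Schwarz theorem we may work with the diffusion $Y$ whose generator is $\mathcal{L}f(x)=(bx-g(x))f'(x)+\mathtt d(x)f''(x)$, $\mathtt d(x)=\gamma^2x+\tfrac{\sigma^2}{2}x^2$. I would first prove that
\[
\psi_\lambda(x):=\exp\left\{-\int_{S(a)}^{S(x)}\bar y_\lambda(v)\,\ud v\right\}
\]
is a positive, non-increasing $C^2$ solution of $\mathcal{L}\psi_\lambda=\lambda\psi_\lambda$ on $(0,\infty)$. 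Writing $\psi_\lambda=w\circ S$ and using that $S$ is a scale function (so $S'=s$ and $\mathtt d(x)s'(x)=(g(x)-bx)s(x)$), the first-order terms cancel and $\mathcal{L}\psi_\lambda=\lambda\psi_\lambda$ reduces to $w''(u)=\lambda\,\bar r(u)^2\,w(u)$ on $(0,S(\infty))$, where $\bar r(u)=\bar\varphi'(u)/\sqrt{\mathtt d(\bar\varphi(u))}$ and $\bar\varphi=S^{-1}$; since $w=\exp\{-\int_{S(a)}^{\cdot}\bar y_\lambda\}$ satisfies $w''=(\bar y_\lambda^2-\bar y_\lambda')\,w$, this is precisely the Ricatti equation of Lemma~\ref{lemma_hqbis}. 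Because $\bar y_\lambda\ge 0$, $\psi_\lambda$ is positive, non-increasing and bounded by $1$ on $[a,\infty)$; the integrability of $\bar y_\lambda$ at $0$ from Lemma~\ref{lemma_hqbis}, which is where the hypothesis $\gamma>0$ is used, makes $\psi_\lambda$ finite at $0$, so the case $a=0$ is covered. I would also fix a second, linearly independent solution $\widehat\psi_\lambda$, normalised to be non-decreasing and positive on $(0,\infty)$; its Wronskian with $\psi_\lambda$ (in the scale variable) is a positive constant $W$.

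\textbf{Two-sided exit and passage to the limit.} Fix $a\le x\le y$. A non-degenerate diffusion started in the compact interval $[a,y]$ leaves it in a.s.\ finite time, so $T_a\wedge T_y<\infty$ a.s. Applying It\^o's formula to $e^{-\lambda t}\psi_\lambda(Z_t)$ and $e^{-\lambda t}\widehat\psi_\lambda(Z_t)$ up to $T_a\wedge T_y$ — the drift parts vanish by $\mathcal{L}\psi_\lambda=\lambda\psi_\lambda$ and $\mathcal{L}\widehat\psi_\lambda=\lambda\widehat\psi_\lambda$, and the stochastic parts are genuine martingales since all coefficients are bounded on $[a,y]$ and $\psi_\lambda,\widehat\psi_\lambda\in C^2$ — and taking expectations, one solves the resulting $2\times 2$ linear system (using $Z_{T_a}=a$, $Z_{T_y}=y$ a.s.\ and the continuity of $\psi_\lambda,\widehat\psi_\lambda$ on $[a,y]$, where finiteness at $0$ uses $\gamma>0$) to obtain
\[
\E_x\big[e^{-\lambda T_a}\mathbf 1_{\{T_a<T_y\}}\big]=\frac{\psi_\lambda(x)}{\psi_\lambda(a)}\cdot\frac{(\widehat\psi_\lambda/\psi_\lambda)(x)-(\widehat\psi_\lambda/\psi_\lambda)(y)}{(\widehat\psi_\lambda/\psi_\lambda)(a)-(\widehat\psi_\lambda/\psi_\lambda)(y)}.
\]
Letting $y\uparrow\infty$ one has $\{T_a<T_y\}\uparrow\{T_a<\infty\}$, so by monotone convergence (with the convention $e^{-\lambda\cdot\infty}=0$) the left-hand side increases to $\E_x[e^{-\lambda T_a}]$. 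On the right-hand side, $\widehat\psi_\lambda/\psi_\lambda$ is strictly increasing — its derivative in the scale variable is $W/\psi_\lambda^2$ — and one shows $(\widehat\psi_\lambda/\psi_\lambda)(y)\to\infty$, so the last quotient tends to $1$ and the right-hand side tends to $\psi_\lambda(x)/\psi_\lambda(a)=\exp\{-\int_{S(a)}^{S(x)}\bar y_\lambda(v)\,\ud v\}$, which is~\eqref{eq_BB2}. The final assertion follows from Theorem~\ref{theo_BB1}.

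\textbf{Where the difficulty lies.} The It\^o computation and the linear algebra are routine; the delicate part is the boundary analysis. Near $0$ one needs $\psi_\lambda$ to be finite and $Z$ to reach $0$ regularly, which is exactly where $\gamma>0$ enters, via the integrability of $\bar y_\lambda$ at $0$ in Lemma~\ref{lemma_hqbis}. The main obstacle is at $S(\infty)$: one must show $(\widehat\psi_\lambda/\psi_\lambda)(y)\to\infty$, equivalently that $\psi_\lambda=\exp\{-\int_{S(a)}^{S(\cdot)}\bar y_\lambda\}\circ S$ is the solution governing the hitting-time transform and that the contribution of paths escaping to $\infty$ vanishes in the limit. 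This is transparent when $S(\infty)=\infty$ — in particular under condition~\eqref{equivalence}, which by Theorem~\ref{theo_BB1} also gives $T_a<\infty$ a.s.\ — since then $w\le 1$ on $[S(a),S(\infty))$ makes $\int^{S(\infty)}w(u)^{-2}\,\ud u$ diverge at once; in the remaining cases it rests on the fine behaviour of $\bar y_\lambda$ near $S(\infty)$ furnished by Lemma~\ref{lemma_hqbis}.
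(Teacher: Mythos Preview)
Your strategy is sound in spirit, but the paper's proof is considerably simpler and your route has a real gap in the case $S(\infty)<\infty$.

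The paper does not introduce a second solution $\widehat\psi_\lambda$ nor a two--sided exit formula. It uses only the single function $f_{\lambda,a}(y)=\exp\{-\int_{S(a)}^{S(y)}\bar y_\lambda\}$, checks (exactly as you do) that it is $C^2$, bounded by $1$, and solves $\mathtt d(y)f''-\mathtt b(y)f'-\lambda f=0$, then applies It\^o's formula to $e^{-\lambda t}f_{\lambda,a}(Z_{t\wedge T_a})$. A reducing sequence $T_n\uparrow\infty$ for the local martingales gives $\E_x[e^{-\lambda(T_n\wedge T_a)}f_{\lambda,a}(Z_{T_n\wedge T_a})]=f_{\lambda,a}(x)$; since $0\le f_{\lambda,a}\le 1$, bounded convergence as $n\to\infty$ yields the result for $a>0$ directly, and $a\downarrow 0$ handles the endpoint. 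The contribution of paths escaping to infinity is killed by the factor $e^{-\lambda T_n}\to 0$, not by any boundary analysis of a second solution.

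Your difficulty at $S(\infty)$ is not merely a technicality: when $S(\infty)<\infty$, the quantity $(\widehat\psi_\lambda/\psi_\lambda)(y)$ does \emph{not} tend to $\infty$. Indeed, in the scale variable $(\widehat w/w)'=W/w^2$ with $w(u)=\exp\{-\int_{S(a)}^{u}\bar y_\lambda\}$; since $\bar y_\lambda\ge 0$ vanishes at $S(\infty)$ (Lemma~\ref{lemma_hqbis}) and $S(\infty)-S(a)<\infty$, the function $w$ is bounded below by a positive constant on $[S(a),S(\infty))$, so $\int^{S(\infty)}w(u)^{-2}\,\ud u<\infty$ and the ratio has a finite limit. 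Thus your quotient does not converge to $1$ and the passage to the limit fails as written. Lemma~\ref{lemma_hqbis} alone cannot rescue this step. The simplest fix is to drop $\widehat\psi_\lambda$ altogether and apply It\^o only to $e^{-\lambda t}\psi_\lambda(Z_t)$ up to $T_a\wedge T_y$ (or up to a localising $T_n$ as the paper does): the term at $T_y$ is bounded by $\E_x[e^{-\lambda T_y}]$, which vanishes as $y\to\infty$ whenever the process does not explode, and this is exactly the mechanism the paper exploits.
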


\begin{proof}
Let $x\geq a >0$, then $(Z_{t\wedge T_a}, t\ge 0)$, under $\P_x$, is a process with values in $[a,\infty)$. For any $y\geq a$, we define
\[
f_{\lambda,a}(y)=\exp\left\{-\int_{S(a)}^{S(x)} \bar y_\lambda(u) \ud u \right\}.
\]
A direct computation ensures that $f_{\lambda,a}$ is a $C^2$-function on $[a,\infty)$, bounded by $1$, $f_{\lambda,a}(a)=1$ and such that it solves
\begin{equation}
 \label{eq_EDO}
    \mathtt{d}(y)f''(y)-\mathtt{b}(y)f'(y)-\lambda f(y)=0.
 \end{equation}
 Applying It\^o Formula to the function $F(t,y)=e^{-\lambda t}f_{\lambda,a}(y)$ and the process $(Z_{t\wedge T_a}, t\ge 0)$, we obtain by means of \eqref{eq_EDO},
 \[
e^{-\lambda t}f_{\lambda,a}(Z_{t\wedge T_a})=f_{\lambda,a}(x)+\int_0^{t\wedge T_a} f_{\lambda,a}'(Z_s)\sqrt{2\gamma^2 Z_s}\ud B^{(b)}_s+\sigma \int_0^{t\wedge T_a}f_{\lambda,a}'(Z_s)Z_s\ud B^{(e)}_s. 
 \]
 We then use a sequence of stopping time $(T_n, n\ge 1)$ that reduces the two local martingales of the right-hand side and from the optimal stopping theorem, we obtain for any $n\ge 1$
 \[
 \E_x\left[ e^{-\lambda T_n \wedge T_a} f_{\lambda,a}(Z_{T_n\wedge T_a}) \right]=f_{\lambda,a}(x).
 \]
 Letting $n$ go to $\infty$ gives \eqref{eq_BB2} for any $x\geq a>0$. We finally let $a$ go to $0$ to deduce the result for $a=0$ and conclude the proof.
 \end{proof}

\noindent \textbf{Acknowledgements.} Both authors  acknowledge support from  the Royal Society and CONACyT-MEXICO. This work was concluded whilst JCP was on sabbatical leave holding a David Parkin Visiting Professorship  at the University of Bath, he gratefully acknowledges the kind hospitality of the Department and University

\end{document}